\theoremstyle{plain}\newtheorem{Theorem}{Theorem}[section]
\theoremstyle{plain}
\theoremstyle{plain}
\theoremstyle{plain}\newtheorem{Lemma}[Theorem]{Lemma}
\theoremstyle{plain}
\theoremstyle{definition}
\theoremstyle{definition}
\theoremstyle{definition}
\theoremstyle{definition}\newtheorem{Remark}[Theorem]{Remark}
\theoremstyle{definition}
\theoremstyle{definition}\newtheorem{Notation}[Theorem]{Notation}
\theoremstyle{definition}\newtheorem{Hypothesis}[Theorem]{Hypothesis}
\theoremstyle{definition}
\theoremstyle{definition}
\theoremstyle{definition}
\theoremstyle{definition}
\theoremstyle{definition}
\theoremstyle{definition}\newtheorem{Notation/Definition}
[Theorem]{Notation/Definition}
\theoremstyle{definition}
\def\dim{\mathrm{dim}}
\def\ker{\mathrm{ker}}
\begin{document}
\title{The Principal $p$-blocks with small numbers of characters
}
\date{\today}
\author{\bf Shigeo Koshitani$^*$, 
Taro Sakurai}  
\address{Center for Frontier Science,
Chiba University, 1-33 Yayoi-cho, Inage-ku, Chiba 263-8522, Japan.}
\email{koshitan@math.s.chiba-u.ac.jp}
\address{ Department of Mathematics and Informatics, Graduate School of Science,
Chiba University, 1-33 Yayoi-cho, Inage-ku, Chiba 263-8522, Japan.}
\email{tsakurai@math.s.chiba-u.ac.jp}

\footnotetext[0]{$^*$Corresponding author}

\thanks{The first author was partially supported by 
the Japan Society for Promotion of Science (JSPS), Grant-in-Aid for Scientific Research
(C)19K03416, 2019--2021}
\subjclass[2000]{20C20, 20C05}
\keywords{principal $p$-block, irreducible character, Brauer character, defect group}

\maketitle

\begin{abstract}
For a prime $p$, we determine a Sylow $p$-subgroup $D$ of a finite group $G$ such that
the principal $p$-block $B$ of $G$ has four irreducible ordinary characters.
It has been determined already for the cases where the number is up to three by work by R.~Brauer, J.~Brandt, and V.A.~Belonogov thirty years ago.
Our proof relies on the classification of finite simple groups.
\end{abstract}
\maketitle

\section{Introduction}

As Richard Brauer said, in $p$-modular representation theory of finite groups, where $p$ is a prime, 
one of the most important and interesting problems is to 
determine the number $k(B)$ of 
irreducible ordinary characters of a finite group $G$ in a $p$-block $B$ of $G$
if a defect group $D$ of $B$ is given
(see \cite[Problem 20]{Bra63}).
In particular, we want to know what we can say about the structure of $B$ if $k(B)$ is small.
It is known for instance that $k(B)=1$ if and only if $D$ is trivial, namely, 
the block algebra $B$ is a simple algebra.
Further we know that $k(B)=2$ if and only if the order of $D$ is two
(see \cite[Theorem A]{Bra82}).
Then, it is quite natural to wonder what happens if $k(B)=3$.
It is kind of surprising that we do not know the answer yet
in spite of the two celebrated results about
the half of Brauer's Height Zero Conjecture
\cite{KM13} and
McKay's Conjecture for $p=2$ \cite{MS16}. 
Going back to the question that $k(B)=3$ for an arbitrary $p$-block $B$,
we know that if $k(B)=3$ and $\ell(B)=1$,
then the order of $D$ is three by
\cite[Theorems A and  B]{Bra82} and \cite{Dad66}, 
where $\ell(B)$ is the number of irreducible Brauer characters
of $G$ belonging to $B$.  
So what is left is to determine $D$ if $k(B)=3$ and $\ell(B)=2$.
As far as we know, this is still open though we guess that $D$ should have order three.
Actually only for the particular case that $B$ is the principal $p$-block, we have an answer, namely
in \cite{Bel90} V.A.~Belonogov proves that if $B$ is the principal $p$-block and $k(B)=3$, then $D$ has
order three.

The next natural situation
is surely looking at the case where
$k(B)=4$. Actually, this is the case of our main result in this paper.
Namely our object is to prove the following:

\begin{Theorem}\label{k=4MainTheorem}
Let $B$ be the principal $p$-block of a finite group $G$ with a Sylow $p$-subgroup $D$. Further, let $k(B)$ and $\ell(B)$ respectively be the numbers of
irreducible ordinary and Brauer characters of $G$ belonging to $B$.
Assume that $k(B)=4$. Then the following holds
(depending on the classification of finite simple groups)
:
\begin{enumerate}
\item[\rm (i)] If  furthermore $\ell(B)=2$, then $D\cong C_5$ (the cyclic group of order $5$).
\item[\rm (ii)] If furthermore $\ell(B)=3$, then $D\cong C_2\times C_2$ (the Klein four group).
\end{enumerate}
\end{Theorem}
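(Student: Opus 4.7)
My plan follows the template of the existing proofs for $k(B_0)\in\{1,2,3\}$: combine classical block-theoretic reductions with an enumeration that depends on the classification of finite simple groups (CFSG).

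\emph{Step 1 (reductions).} The canonical surjection $G\twoheadrightarrow G/O_{p'}(G)$ induces a ring isomorphism $B_0(G)\cong B_0(G/O_{p'}(G))$ preserving the defect group together with $k(B)$ and $\ell(B)$, so I may assume $O_{p'}(G)=1$. Every minimal normal subgroup of $G$ is then either $O_p(G)$ or a direct product of isomorphic non-abelian simple groups of order divisible by $p$.

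\emph{Step 2 (cyclic defect).} If $D$ is cyclic, Brauer--Dade theory yields $\ell(B)=e$ and $k(B)=e+(|D|-1)/e$, where $e=|N_G(D)/C_G(D)|$ is a $p'$-divisor of $|\Aut(D)|$. In case (i), $\ell(B)=2$ and $k(B)=4$ force $|D|=5$, giving $D\cong C_5$ as desired. In case (ii), $\ell(B)=3$ and $k(B)=4$ would force $|D|=4$ and hence $p=2$, but then $e=3$ cannot divide $|\Aut(C_4)|=2$, a contradiction; so in case (ii) the defect group $D$ must be non-cyclic.

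\emph{Step 3 (non-cyclic defect).} I would next rule out non-cyclic $D$ in (i) and identify $D\cong C_2\times C_2$ in (ii). Two general tools cut the possibilities down dramatically before CFSG is invoked. First, Brauer's subsection formula $k(B)-\ell(B)=\sum_{u\ne 1}\ell(b_u)$, summed over $G$-conjugacy classes of non-trivial $B$-subsections $(u,b_u)$, together with $k(B)-\ell(B)\in\{1,2\}$, forces $D$ to have very few $N_G(D)$-orbits on $D\setminus\{1\}$, with each block $b_u$ having very small $\ell(b_u)$. Second, the verified half of Brauer's Height Zero Conjecture \cite{KM13} severely restricts the non-abelian cases, and for $p=2$ Erdmann's classification of blocks with tame (dihedral, semi-dihedral, quaternion) defect pins down the remaining possibilities: in (ii) the only survivor is $D\cong C_2\times C_2$ (with $B_0$ Morita equivalent to $kA_4$), and in (i) every non-cyclic candidate is excluded.

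\emph{Main obstacle.} The substantive work, and the sole reason CFSG is invoked, is the final enumeration: passing to a minimal normal subgroup $N$ and, for each isomorphism type of simple component of $N$, verifying that the principal $p$-block of the associated almost-simple quotient realises $(k(B_0),\ell(B_0))=(4,2)$ or $(4,3)$ only in configurations compatible with the stated conclusion. This requires working through the alternating groups, the 26 sporadic groups, and the families of groups of Lie type whose Sylow $p$-subgroups match the bounds coming from Step 3; the Lie-type analysis is the most delicate part and is where I would expect to spend most of the effort.
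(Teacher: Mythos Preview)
Your broad architecture (reductions, cyclic case via Dade, then CFSG enumeration) matches the paper, and Step~2 is exactly right. However, Step~3 has two genuine gaps that the paper handles with tools you do not mention.

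\textbf{(a) Abelianness of $D$ for $p=2$.} Erdmann's classification applies only to blocks with \emph{tame} defect (dihedral, semidihedral, generalized quaternion), not to arbitrary non-abelian $2$-groups, so it cannot by itself exclude a non-abelian Sylow $2$-subgroup. The paper instead combines Landrock's congruence \cite[Corollary~1.4(i)]{Lan81} with the Navarro--Tiep direction of Brauer's Height Zero Conjecture for $p=2$ \cite{NT12} to conclude directly that $k(B)=4$ forces $D$ abelian (Lemma~\ref{HZCp=2}); for case~(ii) one then invokes Fong--Harris \cite{FH93}.

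\textbf{(b) Abelianness of $D$ for odd $p$ in case (i).} The subsection formula with $k(B)-\ell(B)=2$ gives at most two classes of non-trivial $p$-elements, but this alone does not force $D$ abelian. The paper's argument here is the most delicate part of the whole proof: when each $C_G(u)$ is $p$-nilpotent, Benson--Carlson--Robinson \cite{BCR90} provides a stable equivalence of Morita type between $B$ and $B_0(F\,N_G(D))$ (Lemma~\ref{ZJ}); this transports the invariant $k-\ell$ and the stable center to the local situation, where a detailed counting argument inside $D\rtimes E$ (Lemma~\ref{k-ell=2}) shows $D$ must be abelian, and then an inequality of Otokita on Loewy lengths of $(FD)^E$ forces $D$ elementary abelian. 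None of your listed tools (subsection formula, \cite{KM13}) accomplishes this step.

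Two smaller points: the reduction to simple $G$ is more delicate than ``pass to a minimal normal subgroup'' --- the paper separately reduces $O^{p'}(G)=G$ and $O^p(G)=G$ via Clifford theory and the already-proved $k(B)\le 3$ theorems, and in case~(ii) must even control the auxiliary situation $k(b)=5$, $\ell(b)=3$ (Section~4). For the Lie-type enumeration in non-defining characteristic, the key device is the Brou\'e--Michel isotypy \cite{BM93} with $N_G(D)$ when $D$ is abelian and $p\ge 5$, which reduces everything to the normal-defect case already handled; you should expect this, rather than ad~hoc case analysis, to carry most of the Lie-type burden.
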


\begin{Remark}
It is well-known that $G$ is $p$-nilpotent (and hence the principal $p$-block
is nilpotent) if $\ell(B)=1$. More generally, under the assumption that $\ell(B)=1$
for an arbitrary $p$-block $B$ of $G$, there are interesting results
due to K{\"u}lshammer and Chlebowitz 
in \cite{Kue84, CK92}.
There is also a related result \cite{KS19} where small blocks of (not necessarily
of finite groups)
finite dimensional $F$-algebras are investigated.
\end{Remark}

\begin{Notation}
Throughout this paper $G$ is always a finite group, $p$ is a prime,
$F$ is an algebraically closed field of characteristic $p>0$, and $FG$ is the group
algebra of $G$ over $F$.
We usually denote by $q$ a power of a certain prime unless stated otherwise.
For such a $q$ we write $\mathbb F_q$ for the finite field of $q$ elements
and $\overline{\mathbb F}_q$ for the algebraic closure of  $\mathbb F_q$.
We denote by $G_{p'}$ the set of all $p$-regular elements of $G$,
by $G^\#$ the set $G-\{1\}$, by ${\mathrm{exp}}(G)$ the exponent of $G$,
and  by $Z(G)$ the center of $G$.
We write $H \ {\mathrm{char}}\ G$ when $H$ is a characteristic subgroup of $G$.
We write $\mathbb N$ for the set of all positive integers.
For any $n\in\mathbb N$, $C_n$, $D_n$, and $Q_n$ respectively are
the cyclic group, the dihedral group, and the quaternion group of order $n$.
For any $n\in\mathbb N$ we denote by $\mathfrak S_n$ and $\mathfrak A_n$ respectively
the symmetric and the alternating groups of degree $n$.
For $g\in G$, $K_g$ denotes the conjugacy class of $G$ which contains $g$.
We write $O_p(G)$ and $O_{p'}(G)$ respectively for 
the largest normal $p$-subgroup of $G$
and the largest normal $p'$-subgroup of $G$.
We write similarly $O^p(G)$ and $O^{p'}(G)$ respectively for the smallest
normal subgroup $N$ of $G$ such that $G/N$ is a $p$-group and
the smallest normal subgroup $L$ of $G$ such that $G/L$ is a $p'$-group. 
We write ${\mathrm{Syl}}_p(G)$ for the set of all Sylow $p$-subgroups of $G$.
For two groups $H$ and $L$, we denote by $H\rtimes L$ a semi-direct product of $H$
by $L$, namely $H\vartriangleleft H\rtimes L$.
For two subgroups $H$ and $L$ of $G$, we write $H=_G L$ if there is an element $g\in G$
such that $L = g^{-1}Hg$.
For elements $x, g\in G$, we define $x^g:=g^{-1}xg$.
We write $k(G)$ and $\ell(G)$ for the numbers of the conjugacy classes and
the $p'$-conjugacy classes of $G$, respectively.
We denote by ${\mathrm{Irr}}(G)$ the set of all irreducible ordinary characters of $G$,
and it is well-known that $|{\mathrm{Irr}}(G)|=k(G)$. 
For a $p$-subgroup $P$ of $G$ we write $\mathcal F_P(G)$ for the fusion system (fusion category)
of $G$ over $P$ (see \cite[I, Part I]{AKO11}).
    
Let $B$ be a block (algebra) of $FG$, and let $1_B$ be the block idempotent of $B$.
As usual, we denote by ${\mathrm{Irr}}(B)$ and ${\mathrm{IBr}}(B)$ respectively
the sets of all irreducible ordinary and Brauer characters of $G$ belonging to $B$,
and we denote by $k(B)$ and $\ell (B)$ respectively  $|{\mathrm{Irr}}(B)|$ and
$|{\mathrm{IBr}}(B)|$.
We denote by $k_0(B)$ the number of elements in ${\mathrm{Irr}}(B)$ of height zero.
We write $Z^{\mathrm{pr}}(B)$ for the projective center of $B$ and
$\bar Z(B)$ the stable center of $B$, that is 
$\bar Z(B):= Z(B)/Z^{\mathrm{pr}}(B)$ (see \cite[pp.4 and 20]{Bro94} and \cite[p.127]{KL02}).

For ordinary characters $\chi$ and $\psi$ of $G$, we denote by $\langle \chi, \psi\rangle^G$
the (usual) inner product.
For an ordinary character $\psi$ of $G$ we denote by ${\mathrm{Irr}}(G|\psi)$ the set of all
elements $\chi\in{\mathrm{Irr}}(G)$ such that $\langle\psi, \chi\rangle^G\,{\not=}\,0$,
and we define ${\mathrm{Irr}}(B|\psi):={\mathrm{Irr}}(G|\psi)\cap{\mathrm{Irr}}(B)$.
Further, let $C_B$
be the Cartan matrix of $B$.
We write $1_G$ for the trivial ordinary character of $G$.
For an ordinary character $\chi$ of $G$, we write $\chi^0$ for the restriction of $\chi$
to $G_{p'}$. 
We write $B_0(FG)$ for the principal block (algebra) of $FG$,
and $F_G$ for the trivial (right) $FG$-module.
For a subgroup $H$ of $G$ and for characters $\chi$ of $G$ and $\theta$ of $H$,
we denote by $\chi{\downarrow}_H$ and $\theta{\uparrow}^G$ the restriction of $\chi$ to $H$
and the induction (induced character) of $\theta$ to $G$, respectively.
Let $N\vartriangleleft G$.
For an ordinary character $\theta$ of $N$ and an element $g\in G$
we define $\theta^g$ by $\theta^g(n)=\theta (gng^{-1})$ for $n\in N$.
Let $G_\theta$ be the stabilizer (inertial subgroup) 
of $\theta$ in $G$, namely $G_\theta:=\{g\in G\,|\,\theta^g=\theta\}$.

Let $A$ be a finite dimensional $F$-algebra. We denote by $Z(A)$ the center,
by $J(A)$ the Jacobson radical,
and by $LL(A)$ the Loewy length of $A$, respectively.
    
For the other notation and terminology, see the books \cite{NT89} and \cite{Gor68}.
\end{Notation}

This paper is organized as follows. In \S2 we shall give general lemmas
that are useful.
In \S3 we shall reprove Theorem \ref{MainTheorem},
namely we shall investigate the principal
$p$-blocks $B$ such that $k(B)=3$ and $\ell(B)=2$.
In \S4 we shall give lemmas which shall be used in \S5.
In \S5 we shall prove our first main result, namely, we shall investigate the principal
$p$-blocks $B$ such that $k(B)=4$ and $\ell(B)=3$.
Finally in \S6 we shall prove our second main result, that is, we shall investigate
the principal $p$-blocks $B$ with $k(B)=4$ and $\ell(B)=2$.

\section{Preliminaries}

In this section we shall list several lemmas which shall be
used for the proof of our main results.
The next lemma is quite useful for our aim.

\begin{Lemma}\label{KL02-p.143}
Let $B$ be a block of $FG$ with defect group $D$.
\begin{enumerate}
\item[(i)]
$\dim_F Z^{\mathrm{pr}}(B)$ is the multiplicity of $1$ as elementary divisor of 
$C_B$. 
\item[(ii)]
If further
$C_G(u)$ is $p$-nilpotent for every element $u\in D$ of order $p$, then
$$ \dim_F {\bar Z}(B) = k(B)-\ell(B)+1.$$
\end{enumerate}
\end{Lemma}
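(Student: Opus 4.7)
My plan is to derive both parts from results already available in \cite{Bro94, KL02}, assembled with a short subpair-counting argument for (ii).

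\emph{Part (i).} Since $B$ is a symmetric $F$-algebra, the projective center $Z^{\mathrm{pr}}(B)$ is by definition the image of the Higman transfer map from the projective indecomposable $B$-modules into $Z(B)$. In the natural bases---projective indecomposables $P_1,\dots,P_{\ell(B)}$ on the source, central characters of $\mathrm{Irr}(B)$ on the target---this map is represented, up to a diagonal change of basis by units, by the Cartan matrix $C_B$ reduced modulo $p$. Hence $\dim_F Z^{\mathrm{pr}}(B)$ equals $\mathrm{rank}_F(C_B \bmod p)$, which is the multiplicity of $1$ among the elementary divisors of $C_B$. This is exactly the statement of \cite[p.127]{KL02}, following \cite[pp.4,\,20]{Bro94}, so I would just cite it.

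\emph{Part (ii).} I would first observe that the hypothesis extends to every $u\in D^\#$: any subgroup of a $p$-nilpotent group is $p$-nilpotent, so $C_G(v)\le C_G(u)$ is $p$-nilpotent whenever $v\in\langle u\rangle$ has order $p$. For a $p$-nilpotent group every block has a unique simple module, so $\ell(b_u)=1$ for every $B$-subpair $(u,b_u)$ with $u\ne 1$. Brauer's $p$-section counting formula
\[
k(B)\;=\;\sum_{[(u,b_u)]}\ell(b_u)
\]
(summed over $G$-conjugacy classes of $B$-subpairs $(u,b_u)$ with $u$ a $p$-element) then yields $k(B)-\ell(B)=\#\{[(u,b_u)]\colon u\ne 1\}$. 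The generalized decomposition matrix $\widehat D_B$ is square of size $k(B)\times k(B)$, invertible over $\mathbb Z_{(p)}$ by Brauer, and by second orthogonality satisfies the block-diagonal identity
\[
\widehat D_B^{\,*}\widehat D_B \;=\; C_B \,\oplus\, \bigoplus_{u\ne 1}\bigl(|C_D(u)|\bigr),
\]
each summand $(|C_D(u)|)$ being the $1\times 1$ Cartan matrix of $b_u$ with entry a proper power of $p$. Combined with Brauer's theorem that the largest elementary divisor of $C_B$ is $|D|$ and a determinant count, this forces the Smith normal form of $C_B$ to be $\mathrm{diag}(1,\ldots,1,|D|)$, so exactly $\ell(B)-1$ elementary divisors of $C_B$ equal $1$. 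Applying (i) gives $\dim_F Z^{\mathrm{pr}}(B)=\ell(B)-1$, and the short exact sequence $0\to Z^{\mathrm{pr}}(B)\to Z(B)\to \bar Z(B)\to 0$ together with $\dim_F Z(B)=k(B)$ yields the stated formula.

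The main obstacle I anticipate is the last step---deducing the precise shape $\mathrm{diag}(1,\dots,1,|D|)$ of the Smith normal form of $C_B$ from the block-diagonal identity, i.e., the equality $\det C_B=|D|$. One has $\det(\widehat D_B^{\,*}\widehat D_B)=\det C_B\cdot \prod_{u\ne 1}|C_D(u)|$, and matching this against $|\det \widehat D_B|^2$ pins down $\det C_B$; care is needed because the generalized decomposition numbers for $u\ne 1$ lie in a cyclotomic integer ring rather than $\mathbb Z$, so one must work with absolute values throughout.
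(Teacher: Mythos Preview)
Your skeleton for (ii) is the same as the paper's: obtain $\det C_B=|D|$, combine with Brauer's result that the largest elementary divisor of $C_B$ is $|D|$ to force the Smith form $\mathrm{diag}(1,\dots,1,|D|)$, then apply (i) and $\dim_F Z(B)=k(B)$. The paper simply quotes \cite{Fuj80} for the equality $\det C_B=|D|$ and \cite[Theorem~3.6.35]{NT89} for the elementary-divisor statement; everything else is identical to what you wrote.

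The gap is in your attempt to re-derive Fujii's result. The generalized decomposition matrix $\widehat D_B$ is \emph{not} invertible over $\mathbb Z_{(p)}$ (nor over the localization of the cyclotomic integers at a prime above $p$): from the very identity you quote one has $|\det\widehat D_B|^2=\prod_u\det C_{b_u}$, which is a genuine power of $p$ (already for $G=C_p$ one gets $p^p$). So the block-diagonal identity alone is a tautology here and cannot isolate $\det C_B$ from the remaining factors $\prod_{u\ne 1}|D_u|$ without further input; that input is precisely what Fujii supplies (via lower defect group / subsection machinery, not a bare determinant count). You also write the Cartan entry of $b_u$ as $|C_D(u)|$; in general it is $|D_u|$ for a defect group $D_u$ of $b_u$, which need not equal $|C_D(u)|$---harmless for the intended argument, but worth correcting. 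The cleanest fix is to replace your determinant paragraph with a citation of \cite{Fuj80} for $\det C_B=|D|$, as the paper does.
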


\begin{proof} (i) See \cite[p.4]{Bro94} or \cite[p.143, lines $-7\sim -6$]{KL02}.

(ii) By \cite{Fuj80}, $\det(C_B)=|D|$. Hence, \cite[Theorem 3.6.35]{NT89} implies that
the elementary divisors of $C_B$ are $|D|$ with multiplicity one and $1$ with multiplicity $\ell(B)-1$. 
Hence by (i), $\dim_F Z^{\mathrm{pr}}(B)=\ell(B)-1$. Therefore,
$\dim_F\bar Z(B)=\dim_F Z(B)-\dim_F Z^{\mathrm{pr}}(B) = k(B)-(\ell(B)-1)$
since $k(B)=\dim_FZ(B)$ (see \cite[Lemma 5.11.3]{NT89}).
\end{proof}

\begin{Lemma}[Navarro-Tiep]\label{HZCp=2}
Assume that $p=2$ and $B:=B_0(FG)$.
If $k(B)=4$, then a Sylow $2$-subgroup of $G$ is abelian.
\end{Lemma}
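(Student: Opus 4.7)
The plan is to derive the lemma from the Brauer Height Zero Conjecture for $2$-blocks of maximal defect, established by Navarro and Tiep. That theorem asserts that for the principal $2$-block $B$ of any finite group, a Sylow $2$-subgroup $D$ is abelian if and only if every irreducible character in $B$ has height zero (equivalently, odd degree), i.e.\ $k(B)=k_0(B)$, where $k_0(B)$ denotes the number of height-zero characters.

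Assume for contradiction that $D$ is non-abelian. By Navarro--Tiep, $k_0(B)\le k(B)-1=3$. Since $1_G\in\Irr(B)$ has height zero we have $k_0(B)\ge 1$, so it remains to rule out $k_0(B)\in\{1,2,3\}$ under the hypothesis $k(B)=4$. To exclude these, I would invoke the Alperin--McKay conjecture at $p=2$, proved by Malle--Sp\"ath, yielding
\[
 k_0(B)\;=\;k_0\bigl(B_0(FN_G(D))\bigr)\;=\;|\Irr_{2'}(N_G(D))|,
\]
and combine this with Brauer's class-sum formula
\[
 k(B)\;=\;\sum_{u}\ell(B_u),
\]
the sum ranging over representatives of the $G$-conjugacy classes of elements $u\in D$, where $B_u$ denotes the block of $C_G(u)$ corresponding to $B$ via the Brauer correspondence. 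Together with the structural restrictions on a finite group having only $1$, $2$, or $3$ odd-degree irreducible characters, this should produce the required contradiction with $D$ being non-abelian.

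The main obstacle is this last structural/numerical analysis, which (as in Navarro--Tiep) ultimately rests on the classification of finite simple groups; it is not a light piece of bookkeeping. For the purposes of the present paper I expect the actual proof of the lemma to consist of a single citation to the relevant Navarro--Tiep theorem rather than a fresh local analysis.
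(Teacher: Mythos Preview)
Your identification of Navarro--Tiep's height-zero theorem for principal $2$-blocks as the key ingredient is correct, and your closing guess that the proof is essentially a citation is almost right. What you are missing is the \emph{other} citation. The paper does not try to rule out $k_0(B)\in\{1,2,3\}$ by any analysis at all; it simply invokes Landrock's classical, entirely CFSG-free result [Lan81, Corollary~1.4(i)], which for a $2$-block with $k(B)=4$ already forces $k_0(B)=4$. With $k_0(B)=k(B)$ in hand, Navarro--Tiep [NT12, Theorem~A] then gives that $D$ is abelian, and the proof is over in one line.

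Your proposed route through Malle--Sp\"ath's McKay theorem, Brauer's class formula, and a structural study of groups with at most three odd-degree characters is therefore an unnecessary detour: it would replace Landrock's elementary lemma by a second deep CFSG-dependent theorem plus an analysis you yourself acknowledge is not carried out. (There is also a small slip: Malle--Sp\"ath proved the McKay conjecture, not Alperin--McKay, so your displayed equality $k_0(B)=|\Irr_{2'}(N_G(D))|$ is not immediate, since odd-degree characters of $G$ need not all lie in the principal $2$-block.)
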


\begin{proof}
Follows by \cite[Corollary 1.4(i)]{Lan81}
and \cite[Theorem A]{NT12}
(note that \cite{NT12} uses the {\sf CFSG} and does not \cite{Lan81}).
\end{proof}

\begin{Lemma}\label{[KNST14,Theorem A]} 
Assume that a pair $(p,G)$ is one of the pairs of 
the primes $p$ and the finite simple groups $G$
listed in \cite[(ii) and (iii) in Theorem A]{KNST14}, and let $B:=B_0(FG)$. 
Then $k(B)-\ell(B)\,{\not=}\,2$ and ${\not=}\,1$
\end{Lemma}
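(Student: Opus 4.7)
The plan is to reduce the statement to a finite, explicit case check. The list of pairs $(p,G)$ appearing in \cite[Theorem A (ii), (iii)]{KNST14} is a short, tabulated collection consisting of primes $p$ together with specific finite simple groups $G$, so the claim is literally a finite verification: for each entry on that list, one must show that neither $k(B_0(FG))-\ell(B_0(FG))=1$ nor $k(B_0(FG))-\ell(B_0(FG))=2$ holds.

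For each individual pair $(p,G)$ on the list, the plan is as follows. First, compute $k(B_0(FG))$ by reading off the $p$-block distribution of $\mathrm{Irr}(G)$ from the ordinary character table of $G$ (tabulated in the ATLAS and in the GAP character table library); the principal $p$-block is singled out by the classical criterion that the central character $\omega_\chi$ agrees with $\omega_{1_G}$ modulo $p$ on $p$-regular classes. Next, compute $\ell(B_0(FG))$ from the Brauer character table of $G$ at $p$, using the Modular Atlas or the corresponding modular data in GAP. Finally, subtract the two numbers and confirm the required inequality in each case.

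The main obstacle will be ensuring reliable modular data for every pair on the list. For the small simple groups that typically appear in such a classification this is not an issue, but in any borderline case where a complete Brauer table is not explicitly tabulated one can instead derive $\ell(B_0(FG))$ indirectly, for instance by enumerating the $p$-regular conjugacy classes of $G$ and determining which of them belong to $B_0$, or via a Fong--Reynolds / Morita-equivalence style reduction to a smaller group. Once the numerical data are in hand, the conclusion reduces to a routine tabular check; no conceptual difficulty beyond organising the case list should remain.
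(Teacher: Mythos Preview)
Your plan is essentially the paper's: the lemma is proved by a finite case-by-case lookup of $k(B)$ and $\ell(B)$ for each pair $(p,G)$ on the list in \cite[(ii), (iii) of Theorem A]{KNST14}. The only difference is the source of the numerical data: you propose to extract it from the ATLAS, the Modular Atlas, and GAP, whereas the paper simply cites \cite[Tables 1 and 2]{Hen07}, where the relevant block invariants for these groups (whose Sylow $p$-subgroups are extraspecial of order $p^3$ and exponent $p$) are already tabulated. Your route works just as well; citing Hendren is merely the more economical reference.
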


\begin{proof}
Follows from  \cite[(ii) and (iii) in Theorem A]{KNST14} and \cite[Tables 1 and 2]{Hen07}.
\end{proof}

\begin{Lemma}[\cite{BCR90}]\label{ZJ_new}
Assume that 
$p$ is odd, and that
for every element $u\in G$ of order $p$, 
\linebreak
$C_G(u)$ is $p$-nilpotent.
Set $H:= N_G\Big(Z(J(D))\Big)$ where $D\in{\mathrm{Syl}}_p(G)$ and 
$J(D)$ is the Thompson subgroup of $D$.
Then
$p\,{\not{\mid}}\,|G:H|$, 
and ${\mathrm{H}}^*(G,F)\cong{\mathrm{H}}^*(H,F)$.
\end{Lemma}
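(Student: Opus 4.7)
The plan is to handle the two claims separately. The first, $p \nmid |G:H|$, is immediate: the Thompson subgroup $J(D)$ is generated by the abelian subgroups of $D$ of maximum order and is therefore characteristic in $D$, hence so is $Z(J(D))$. Consequently $D \leq N_G(Z(J(D))) = H$, so $D \in \mathrm{Syl}_p(H)$ and $p$ does not divide $|G:H|$.

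For the cohomology isomorphism, the strategy is to prove that $H$ controls $p$-fusion in $D$, that is $\mathcal{F}_D(G) = \mathcal{F}_D(H)$, and then appeal to the Cartan--Eilenberg stable elements theorem. Since $D$ is a common Sylow $p$-subgroup of $G$ and $H$, restriction along $D$ identifies $H^{*}(G,F)$ and $H^{*}(H,F)$ with the $\mathcal{F}_D(G)$-stable and $\mathcal{F}_D(H)$-stable subrings of $H^{*}(D,F)$ respectively; once the fusion systems coincide, the cohomology rings must coincide as well.

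To establish this fusion equality I would apply Alperin's fusion theorem, reducing to the claim that every $G$-automorphism of each essential subgroup $Q \leq D$ is already realized in $H$. The hypothesis propagates to $p$-local subgroups: for any $z \in Z(Q)$ of order $p$, the centralizer $C_G(Q) \leq C_G(z)$ inherits $p$-nilpotency, which forces the quotient $N_G(Q)/O_{p'}(N_G(Q))$ to be $p$-constrained. Since $p$ is odd, Glauberman's $ZJ$-theorem then applies and yields that, modulo its $p'$-core, $N_G(Q)$ normalizes $Z(J(D_Q))$ for any $D_Q \in \mathrm{Syl}_p(N_G(Q))$. A standard Frattini/weak-closure argument brings $Z(J(D_Q))$ into $D$ by conjugation, identifying it with $Z(J(D))$ and placing the relevant automorphism inside $H = N_G(Z(J(D)))$. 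This is the line of argument carried out in \cite{BCR90}.

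The main obstacle is precisely this fusion-control step: one must verify that the local hypothesis on centralizers of order-$p$ elements supplies the $p$-constraint needed for Glauberman's $ZJ$-theorem in each $p$-local subgroup, and then track the conjugations so that every essential automorphism lifts into $H$. Everything else, and in particular the final cohomological conclusion via stable elements, is then a formal consequence.
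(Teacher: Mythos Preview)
Your proposal is correct and matches the paper, whose entire proof is the one-line citation ``Follows by \cite[Proposition 9.2 and lines $-7\sim-4$, p.~62]{BCR90}''; you have in effect unpacked that reference. One small correction to your sketch: Glauberman's $ZJ$-theorem requires $p$-stability (i.e.\ the absence of $\mathrm{Qd}(p)$ as a section) in addition to $p$-constraint, and it is precisely the centralizer hypothesis on order-$p$ elements that rules out $\mathrm{Qd}(p)$---this is the real content of the ``main obstacle'' you flag, and is handled in \cite{BCR90} (cf.\ also the proof of Lemma~\ref{ZJ} later in the paper, where the same mechanism is invoked explicitly).
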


\begin{proof}
Follows by \cite[Proposition 9.2 and lines $-7\sim -4$, p.62]{BCR90}.
\end{proof}

\noindent
It seems that the following lemma demands too strong assumption, however it
will be quite useful for our aim because of the stable equivalence of Morita type 
due to M.~Brou\'e.

\begin{Lemma}\label{k-ell=2}
Suppose that $D\in{\mathrm{Syl}}_p(G)$ with $D\vartriangleleft G$ and that
$G=D\rtimes E$ for a $p'$-subgroup $E$ of $G$ such that $E$ acts on $D$ faithfully
(and hence $O_{p'}(G)=1$, so that $B:=B_0(FG)$ is the unique block of $FG$
by \cite{FG61}). Assume furthermore that $k(B)-\ell(B)=2$ and that
$C_G(u)$ is $p$-nilpotent for every element $u\in D$ of order $p$.
Then it holds the following:
\begin{enumerate}
\item[(i)] Every $p'$-conjugacy class $K$ of $G$ with $K\,{\not=}\,\{1\}$ is of defect zero.
\item[(ii)] $E$ acts on $D^\#$ semi-regularly (i.e. freely).
\item[(iii)] Let $K_0:=\{1\}, K_1, \cdots, K_{\ell(G)-1}$ be all $p'$-conjugacy classes of $G$. Then,
$$\bigsqcup_{i=1}^{\ell(G)-1} K_i = \{vy \,|\, v\in D, y\in E^\# \}=DE^\#.$$
\item[(iv)] $D$ is abelian.
\item[(v)] There are elements $u, v\in D^\#$ such that
$D^\#=K_u\,\sqcup\, K_v 
\text{ (disjoint union)}$ and
$|K_u|=|K_v|=|E|$.
\item[(vi)] The projective center $\bar Z(B)$ is a symmetric $F$-algebra of 
$F$-dimension $3$.
\end{enumerate}
\end{Lemma}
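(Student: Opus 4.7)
The plan is to translate the numerical hypothesis $k(B) - \ell(B) = 2$ into the statement that $G$ has exactly two $p$-singular conjugacy classes, use the $p$-nilpotence hypothesis to pin down the centralizer of every order-$p$ element of $D$, and then show by a case analysis that $G$ must have a Frobenius-like structure with $D$ elementary abelian and $E$ acting freely on $D^\#$; parts (i)--(vi) are then routine consequences.

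Since $O_{p'}(G)=1$, \cite{FG61} gives $B=FG$ as the unique block, whence $k(B)=k(G)$ and $\ell(B)=\ell(G)$; by Brauer's theorem $G$ therefore has exactly two $p$-singular conjugacy classes. In the split extension $G=D\rtimes E$ every $p'$-element is $G$-conjugate into $E$ by Hall's theorem, and two elements $e,e'\in E$ are $G$-conjugate iff $E$-conjugate (any conjugator $de_0\in G$ forces $d\in C_D(e_0ee_0^{-1})$ and $e'=e_0ee_0^{-1}$), so $\ell(G)=k(E)$. The key local fact is: for each $u\in D$ of order $p$, normality $D\triangleleft G$ combined with $p$-nilpotence of $C_G(u)$ yields $C_G(u)=C_D(u)\times N_u$ with $N_u:=O_{p'}(C_G(u))$, as both factors are normal in $C_G(u)$ and intersect trivially. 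Consequently, if $K_{x_1},\dots,K_{x_r}$ denote the distinct $G$-classes of non-identity $p$-elements of $G$ (all inside $D^\#$), the number of $G$-classes of $p$-singular elements with $p$-part in $K_{x_i}$ equals $\ell(C_G(x_i))$, so $\sum_{i=1}^{r}\ell(C_G(x_i))=2$.

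The crux is to deduce (ii) and (iv) from this constraint. The case $r=1$ is excluded: $\ell(C_G(x_1))=2$ forces $x_1$ of order $p$ (else $x_1$ and $x_1^p$ would lie in distinct $G$-classes) and $N_{x_1}\cong C_2$; since $K_{x_1}=D^\#$ contains the $G$-invariant characteristic subset $Z(D)^\#\neq\emptyset$, we get $D^\#\subseteq Z(D)$ and hence $D$ abelian, in which case $p$-nilpotence of $C_G(x_1)=D\rtimes C_E(x_1)$ and faithfulness of $E$ force $C_E(x_1)\subseteq C_E(D)=1$, and so $N_{x_1}=1$, contradicting $N_{x_1}\cong C_2$. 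So $r=2$ and each $C_G(x_i)$ is a $p$-group contained in $D$. If $D$ were non-abelian, the two classes would be $K_{x_1}=Z(D)^\#$ and $K_{x_2}=D\setminus Z(D)$, giving $|Z(D)|=|E|+1=p^a$ and the class-size equation $|C_D(x_2)|=|D|\,|E|/(|D|-|E|-1)$; combined with $|C_D(x_2)|\geq p^{a+1}$ (since $C_D(x_2)$ contains $\langle x_2,Z(D)\rangle$ of order $p^{a+1}$) and the requirement that $|C_D(x_2)|$ and $|D|$ be $p$-powers, a divisibility analysis forces $|D|=p^{2a}$ and $|C_D(x_2)|=p^a<p^{a+1}$, a contradiction. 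Hence $D$ is abelian, and then $C_G(x_i)=D$ gives $|K_{x_i}|=|E|$ for $i=1,2$, so $|D|=2|E|+1$; the possibility $\exp(D)>p$ similarly yields $|D|=2|\Omega_1(D)|-1\equiv -1\pmod{p}$, which is not a $p$-power. Thus $D$ is elementary abelian. Since the two $G$-classes of $D^\#$ coincide with two $E$-orbits (as $D$ is abelian), each of size $|E|$, the $E$-action on $D^\#$ has trivial point stabilizers, proving (ii) and (iv).

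Parts (i), (iii), (v), (vi) follow quickly. For (i), $C_G(e)=C_D(e)\,C_E(e)=C_E(e)$ is a $p'$-group for $e\in E^\#$. For (iii), free action makes $d_0\mapsto d_0^{-1}ed_0\,e^{-1}$ a bijection of $D$ for each $e\in E^\#$, so $\{d_0^{-1}ed_0:d_0\in D\}=De$; every element of $DE^\#$ is thus $G$-conjugate to some element of $E^\#$, yielding $\bigsqcup_{i\geq 1}K_i=DE^\#$. For (v), the two $G$-classes of $D^\#$ are two $E$-orbits each of size $|E|$, so $|D^\#|=2|E|$ and one picks $u,v$ in distinct orbits. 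For (vi), Lemma~\ref{KL02-p.143}(ii) gives $\dim_F\bar Z(B)=k(B)-\ell(B)+1=3$, and $\bar Z(B)$ is symmetric as the stable center of the symmetric algebra $B$ (see \cite[p.~4]{Bro94} and \cite[p.~127]{KL02}). The main obstacle is the structural analysis in the third paragraph: the divisibility argument for ruling out non-abelian $D$ and the center-invariance argument for ruling out $r=1$ each require careful threading of the $p$-nilpotence hypothesis through the class-count equation.
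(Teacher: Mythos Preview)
Your argument is largely correct and proceeds along a somewhat different route from the paper. The paper proves (i) first, using Fujii's result \cite{Fuj80} that $\det(C_B)=|D|$ together with the structure of the elementary divisors of $C_B$ and \cite[Theorem~5.11.6(iv)]{NT89}; then (ii) follows in two lines from (i) (if $y\in C_E(v)^\#$ then the $p$-element $v$ lies in $C_G(y)$, contradicting that $K_y$ has defect zero). Only after that does the paper attack (iv), via an arithmetic manipulation equivalent to yours but phrased as $p^t=p^{s+t-d}+p^s-1$ with $s<t$. You instead do the structural work for (ii) and (iv) first---ruling out $r=1$ via the center-invariance argument, then the non-abelian case by your divisibility analysis---and only afterwards recover (i) from the free action. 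Both orderings are valid; the paper's has the advantage that (i) and (ii) are obtained cleanly without any case analysis, while yours is more elementary in that it avoids \cite{Fuj80} and the Cartan-matrix machinery, and incidentally yields the stronger conclusion that $D$ is elementary abelian.

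There is, however, one genuine gap in your treatment of (vi): it is \emph{not} true in general that the stable center of a symmetric algebra is itself symmetric, so your bare appeal to \cite[p.~4]{Bro94} and \cite[p.~127]{KL02} (which merely record the definition of $\bar Z(B)$) does not suffice. The paper instead invokes \cite[Theorem~3.1]{KL02} and verifies its condition (ii): by (iv) every element of $D$ is $G$-conjugate into $Z(D)=D$, and for every non-trivial $p$-element $u'$ one has $\ell\big(B_0(FC_G(u'))\big)=1$---a fact you have already established in your $r=2$ analysis, since each $C_G(x_i)$ is a $p$-group. With that citation and verification your proof of (vi) is complete.
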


\begin{proof}
(i) 
It follows from the assumption that $C_G(u)$ is $p$-nilpotent for every $u\in G$ of order $p$
and \cite{Fuj80} that $\det(C_B)=|D|$. Hence \cite[Theorem 3.6.35]{NT89} implies that all the elementary
divisors of $C_B$ is $|D|$ with multiplicity one and $1$ with multiplicity $\ell(B)-1$. 
Thus the assertion follows from \cite[Theorem 5.11.6(iv)]{NT89}.

(ii) 
Apparently, $E$ acts on $D^\#$. Take any element $v\in D^\#$ and any $p'$-element $y\in C_E(v)$.
Assume that $y\,{\not=}\,1$.
Then, $K_y$ has defect zero by (i), which means that $p\,{\not |}\,|C_G(y)|$. Since $v$ is a $p$-element
in $C_G(y)^\#$, this is a contradiction. Hence $y=1$, so that $C_E(v)=1$.

(iii)
The second equality holds trivially. So, we look at only the first equality.
Take any element $g\in K_i$ for some $i\in\{1,\cdots,\ell(G)-1\}$. 
Since $G=D\rtimes E$, we can write
$g=vy$ for some $v\in D$ and $y\in E$. Since $g$ is a non-identity $p'$-element, 
$y\,{\not=}\,1$, so that $y\in E^\#$.

Conversely, take any element $g=vy$ with $v\in D$ and $y\in E^\#$. Hence $g\,{\not=}\,1$.
Let $g_p$ and $g_{p'}$ respectively be the $p$-part and the $p'$-part of $g$.
Since $y\,{\not=}\,1$ and since $D\vartriangleleft G$, $g$ is not a $p$-element. Hence $g_{p'}\,{\not=}\,1$.
Thus we can assume that $g_{p'}\in K_1$.
Then, (i) yields that $g_p=1$. That is, $g=g_{p'}$, which means that $g$ is a non-identity $p'$-element,
and hence $g\in\bigsqcup_{i=1}^{\ell(G)-1}\, K_i$.

(iv)
 Let $r$ be the number of all conjugacy classes of $G$ that contain $p$-elements (possibly the identity element).  
Since $k(B)-\ell(B)=2$ by the assumption, it follows by \cite[Theo\-rems 5.4.13(ii) and 5.6.1]{NT89} that
$r=2$ or $3$. If $r=2$, then for every $p$-element $u\in G^\#$, $\ell(B_0(F[C_G(u)]))=2$,
contradicting the assumption that $C_G(u)$ is $p$-nilpotent. Thus, $r=3$. So, we can pick up two
non-identity $p$-elements, say $z, u$ such that $z\in Z(D)$ and that $u$ and $z$ are not conjugate in $G$.
Then, since $D\vartriangleleft G$,
$D=\{1\}\sqcup K_z\sqcup K_u$ (disjoint union).
 
{\sf Suppose that $D$ is non-abelian.} Then, $u$ can be chosen as $u\,{\not\in}\,Z(D)$.
First we claim $K_u\cap Z(D)=\emptyset$.
If there is an element $z' \in K_u\cap Z(D)^{\#}$, then $u$ is $G$-conjugate to an element in $Z(D)$, 
so that $u\in Z(D)$ since $Z(D) \, {\mathrm{char}} \, D\, {\mathrm{char}}\, G$,
a contradiction. Thus, $K_u\cap Z(D)=\emptyset$.

Now, we claim that $Z(D)^\#=K_z$. Since $D\vartriangleleft G$ and 
$Z(D) \, {\mathrm{char}} \, D$,
$C_z\subseteq Z(D)^\#$. Conversely, take any element $z'\in Z(D)^\#$. 
Then, since $K_u\cap Z(D)=\emptyset$ 
from the above, $z' \,{\not\in}\, K_u$, so that $z' \in K_z$. Namely, $Z(D)^\# \subseteq K_z$,
so the claim is proved.
Thus, by (iii)
\begin{equation}\label{decompG}  \ 
G= K_0\sqcup K_z\sqcup K_u\sqcup K_1\sqcup\cdots\sqcup K_{\ell(G)-1}, \ \ 
Z(D)^\#=K_z, \ \ 
D^\#=K_z\sqcup K_u.
\end{equation}

We next claim that $E$ acts on $K_z$ regularly. Clearly $E$ acts on $K_z$ semi-regularly by (ii). Further,
take any two elements $z_1, z_2\in K_z$. As we have seen above, $z_1, z_2\in Z(D)$.
There is an element $g\in G$ such that $z_2=g^{-1}z_1 g$. We can write $g=vy$ for some $v\in D$ and $y\in E$.
Then, $z_2={z_1}^g={z_1}^{vy}={z_1}^y$, so that $z_1$ and $z_2$ are conjugate in $E$.
So that $E$ acts on $K_z$ transitively. Hence $E$ acts on $K_z$ regularly. Therefore 
\begin{equation}\label{K_z} |K_z|=|E|. \end{equation}

Further we claim that $C_G(u)=C_D(u)$. Take any element $g\in C_G(u)$. First note that $C_D(u)$ is a Sylow
$p$-subgroup of $C_G(u)$ since $D\vartriangleleft G$. Clearly, $C_D(u)\vartriangleleft C_G(u)$.
On the other hand, since $C_G(u)$ is $p$-nilpotent by the assumption, we have
$$ C_G(u)=O_{p'}(C_G(u))\times C_D(u).$$
Then, (i) implies that $O_{p'}(C_G(u))=1$, so that $C_G(u)=C_D(u)$.
Thus $|K_u|=|G:C_G(u)|=|G:C_D(u)|=|D||E|/|C_D(u)|=|E||D:C_D(u)|$. Set $e:=|E|$. Hence
\begin{equation}\label{K_u} |K_u|=e|D:C_D(u)|. \end{equation}
Then, 
\begin{align*}
e|D| &= |G| 
\\&= 1+e+e|D:C_D(u)|+|D|(e-1) \text{ \quad by  
(\ref{decompG}), (\ref{K_z}), (\ref{K_u}) and (iii)}
\\&=1+e+e|D:C_D(u)|+|D|e-|D| 
\end{align*}
so that 
\begin{align*} |D|-1 &= e(1+|D:C_D(u)|) 
\\
&= (|Z(D)|-1)(1+|D:C_D(u)|)      \text{ \quad by  (\ref{decompG}), (\ref{K_z})}
\\&
=|Z(D)|+|Z(D)||D:C_D(u)|-|D:C_D(u)|-1
\end{align*}
and hence
\begin{equation}\label{|D|}
 |D|=|Z(D)|+|Z(D)||D:C_D(U)|-|D:C_D(u)|.
\end{equation}
Now, set
$$ |D|=:p^d, \ |Z(D)|=:p^s, \ |C_D(u)|=:p^t  \ \text{ for some integers }d, s, t \geq 1.$$
Thus, by (\ref{|D|}),
$$ p^d = p^s + p^s p^d/p^t - p^d/p^t$$
so that
$$ p^d p^t = p^{s+t} + p^{s+d} - p^d $$
and hence
\begin{equation}\label{p^t} 
            p^t = p^{s+t-d} + p^s - 1.
\end{equation}
Clearly $Z(D) \subsetneqq Z(D)\langle u\rangle \subseteq C_D(u)$ since $u\in D - Z(D)$,
so that $Z(D) \lneqq C_D(u)$, and hence $p^s<p^t$. Hence we can set that
$$  t-s =: n \text{\quad for some integer }n\geq 1. $$
Thus, by (\ref{p^t}),
$$  p^s - 1 = p^t - p^{s+t-d} = p^{s+n} - p^{2s+n-d}. $$
Since $s\geq 1$, $p^s-1$ is a positive integer. Furthermore since $s+n > 1$, we have to have that
$$ 2s+n-d = 0.$$
Hence by the above,
$$ p^s-1 = p^{s+n}-1.$$
So that $s = s+n$, and hence $n=0$, a contradiction.
{\sf Therefore, $D$ is abelian.}

(v) By (iv) and the proof of (iv), we get the assertion immediately. 

(vi) By the assumption, for every $p$-element $u'\in G^\#$,
$C_G(u')$ is $p$-nilpotent, so that $\ell( B_0(F[C_G(u')]))=1$. 
Further, by (iv), every element $u'$ in $D$ is conjugate to an element in $Z(D)$.
Thus, the condition (ii) in \cite[Theorem 3.1]{KL02} is satisfied, so that by the theorem
$\bar Z(B)$ is a symmetric algebra.
On the other hand, by the assumption that $k(B)-\ell(B)=2$ and Lemma \ref{KL02-p.143}, 
we have $\dim_F\bar Z(B)=3$. 
\end{proof}

\noindent
Although the first part of the next lemma is essentially due to Dade,
we give an elementary proof for convenience of the readers.

\begin{Lemma}[Dade, Lemma 9.9 in \cite{Dad99}]
\label{1_B=1_b}
Assume that $N \vartriangleleft G$ with $p\,{\not|}\,|G/N|$, 
and $B$ and $b$ are blocks of $FG$ and $FN$, respectively,
such that $B$ covers $b$. Let $D$ be a defect group of $B$
such that $C_G(D)\leq N$.
\begin{enumerate}
\item[\rm (i)]
If furthermore $b$ is $G$-invariant, then $1_B=1_b$,
namely $B$ is the unique block of $FG$ covering $b$, 
and $b$ is the unique block of $FN$ covered by $B$.
\item[\rm (ii)]
If in particular $B=B_0(FG)$, then,
${\mathrm{Irr}}(B\,|\,{{1_N{\uparrow}}^G})= {\mathrm{Irr}}(G/N)$
where we identify ${\mathrm{Irr}}(G/N)$ as a subset of ${\mathrm{Irr}}(G)$.
\end{enumerate}
\end{Lemma}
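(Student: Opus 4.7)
The plan is to prove (i) via the Brauer correspondence and then deduce (ii) as a direct consequence. First, since $p \nmid |G/N|$ and $D$ is a $p$-subgroup, the image $DN/N$ of $D$ in $G/N$ is a $p$-subgroup of a $p'$-group and hence trivial, so $D \leq N$. Because $B$ covers the $G$-invariant block $b$ and $D \cap N = D$, standard Clifford-block theory gives that $D$ is a defect group of $b$ as well. The main step is to exploit the hypothesis $C_G(D) \leq N$ to identify the Brauer correspondent: let $(D, e)$ be a maximal $B$-Brauer pair, so $e$ is a block of $F[C_G(D)]$ with $e^G = B$. Since $C_G(D) = C_N(D)$, the block $e$ may also be viewed as a block of $F[C_N(D)]$; because $B$ covers $b$, transitivity of Brauer correspondence forces $e^N = b$. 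If $B'$ is any block of $FG$ covering $b$, its defect groups are defect groups of $b$ (by $G$-invariance of $b$), hence $N$-conjugate to $D$; after replacing $B'$ by a $G$-conjugate we may pick a maximal $B'$-Brauer pair $(D, e')$ and similarly obtain $(e')^N = b$. Uniqueness of the Brauer correspondent up to $N_N(D)$-conjugation then gives $e' = e^n$ for some $n \in N_N(D) \leq G$, whence $B' = (e')^G = (e^n)^G = B^n = B$, since block idempotents of $FG$ are fixed by $G$-conjugation. Thus $B$ is the unique block of $FG$ covering $b$, giving $1_B = 1_b$; the uniqueness of $b$ as a block of $FN$ covered by $B$ follows because the blocks of $FN$ covered by a given block of $FG$ form a single $G$-orbit, which here is $\{b\}$.

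For (ii), the defect group of $B = B_0(FG)$ is a Sylow $p$-subgroup of $G$, and $B$ covers $B_0(FN)$. Since $B_0(FN)$ is the unique block of $FN$ containing the trivial character $1_N$, it is $G$-invariant, and part (i) applies with $b = B_0(FN)$; in particular $B_0(FG)$ is the unique block of $FG$ covering $B_0(FN)$. For $\chi \in \mathrm{Irr}(G)$, Frobenius reciprocity yields $\langle 1_N\uparrow^G,\chi\rangle^G \neq 0$ if and only if $N \leq \ker\chi$, equivalently $\chi \in \mathrm{Irr}(G/N)$; and any such $\chi$ satisfies $\chi\downarrow_N = \chi(1)\cdot 1_N$, so $\chi$ lies in a block of $FG$ covering $B_0(FN)$, which must be $B_0(FG)$. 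This gives $\mathrm{Irr}(B\,|\,1_N\uparrow^G) = \mathrm{Irr}(G/N)$, as desired.

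The main obstacle is the careful Brauer-correspondence bookkeeping in (i), namely the identities $e^N = b$ and $(e')^N = b$ and the subsequent $N_N(D)$-conjugacy argument; the hypothesis $C_G(D) \leq N$ is essential precisely so that a Brauer correspondent lying in $F[C_G(D)]$ may be reinterpreted as a block of $F[C_N(D)]$, allowing the Brauer correspondence inside $N$ to be invoked.
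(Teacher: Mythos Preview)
Your proof is correct, but part (i) follows a genuinely different route from the paper's. The paper argues entirely at the level of $N \vartriangleleft G$: from $C_G(D) \leq N$ it deduces (via \cite[Lemma 5.5.14]{NT89}) that $B$ is \emph{regular} with respect to $N$, so $B = b^G$ by \cite[Theorem 5.5.13(ii)]{NT89}; then for any second block $B_2$ covering $b$ it shows that a defect group $D_2$ of $B_2$ may be taken to contain $D$, whence $C_G(D_2) \leq C_G(D) \leq N$ and $B_2$ is regular too, forcing $B_2 = b^G = B$. You instead descend to $C_G(D) = C_N(D)$ and use Brauer pairs: a maximal $B$-Brauer pair $(D,e)$ is simultaneously a maximal $b$-Brauer pair in $N$, and likewise for any $B'$ covering $b$; conjugacy of maximal $b$-Brauer pairs then gives $B' = B$. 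The paper's approach is slightly more elementary (no Brauer-pair machinery), while yours makes the role of the hypothesis $C_G(D) \leq N$ more transparent. Two small points: your phrase ``transitivity of Brauer correspondence forces $e^N = b$'' is the crux and deserves one line of justification --- the clean way is to note that $1_B \cdot 1_b = 1_B$ (as $1_b \in Z(FG)$ and $B$ covers $b$), apply the Brauer homomorphism ${\mathrm{Br}}_D$, and read off ${\mathrm{Br}}_D(1_b)\cdot 1_e = 1_e$; and ``replacing $B'$ by a $G$-conjugate'' is a slip, since blocks of $FG$ are $G$-stable --- your own argument already shows that $D$ is a defect group of $B'$ with no replacement needed. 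Part (ii) is essentially the paper's argument, which cites Gallagher \cite[(6.17) Corollary]{Isa76} where you unwind Clifford theory and Frobenius reciprocity directly.
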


\begin{proof}
(i)
Since $b$ is $G$-invariant, it holds that $b$ is the unique block of $FN$ covered by $B$ 
(see \cite[Lemma 5.5.3]{NT89}) and that $1_b\in Z(FG)$. Surely $1_b$ is an idempotent.
Set $B_1:=B$. Hence we can write that
$1_b=1_{B_1} + 1_{B_2}+\cdots +1_{B_n}$ for an integer $n\geq 1$
and for distinct blocks $B_1, \cdots, B_n$ of $FG$ since $B_1$ covers $b$.
Note that $B_1, \cdots, B_n$ are all blocks of $FG$ covering $b$. 
So it suffices to prove that $n=1$.
    
Since $C_G(D)\leq N$ and $D$ is a defect group of $B$, 
\cite[Lemma 5.5.14]{NT89} implies that
$B$ is regular with respect to $N$. Hence, by \cite[Theorem 5.5.13(ii)]{NT89}, $B=b^G$.
Now, by \cite[4.2. Proposition]{Kno76}, there is a defect group $\mathbb D$ of $b$
such that $\mathbb D =_G D\cap N$. Obviously $D\cap N = D$ since
$D/(D\cap N)\cong DN/N \leq G/N$ and $G/N$ is a $p'$-group.
Hence $\mathbb D=_G D$, so that there is an element $g\in G$ such that 
$\mathbb D= g^{-1}Dg$. Then, by replacing $g^{-1}Dg$ by $D$, we can assume that $\mathbb D=D$. 
Namely, $D$ is a defect group of $b$ as well.

Assume that $n\geq 2$. So, there exists the block $B_2$ as above. 
Let $D_2$ be a defect group of $B_2$. 
Since $B_2$ covers $b$ and $D$ is a defect group of $b$,
\cite[4.2. Proposition]{Kno76} implies that $D=_G D_2\cap N$.
Again by replacing some $G$-conjugate of $D_2$ by $D_2$, we can assume that
$D=D_2\cap N$.
Hence $D\leq D_2$, so that $C_G(D_2)\leq C_G(D)$. Thus, the assumption that $C_G(D)\leq N$
yields that $C_G(D_2)\leq N$. Therefore again by \cite[Lemma 5.5.14]{NT89}, $B_2$ is regular
with respect to $N$. 
Hence $B$ and $B_2$ are both regular with respect to $N$ and also both cover $b$,
that contradicts the 
uniqueness in \cite[Theorem 5.5.13(ii)]{NT89}.
 
(ii)
Set $b:=B_0(FN)$.
From (i),  $1_B=1_b$. Since the character $1_N$ 
extends to the character $1_G$, the assertion follows
by \cite[(6.17) Corollary]{Isa76}.
\end{proof}

\begin{Lemma}\label{Gallagher|G/N|=p}
Assume that $N \vartriangleleft G$ such that $G/N$ is a $p$-group, 
and $B$ and $b$ are blocks of $FG$ and $FN$, respectively,
such that $B$ covers $b$. 
\begin{enumerate}
\item[\rm (i)]
If furthermore $b$ is $G$-invariant, then $1_B=1_b$.
\item[\rm (ii)]
If particularly $B=B_0(FG)$, then 
${\mathrm{Irr}}(B\,|\,{{1_N{\uparrow}}^G})= {\mathrm{Irr}}(G/N)$
where we identify ${\mathrm{Irr}}(G/N)$ as a subset of ${\mathrm{Irr}}(G)$.
\end{enumerate}
\end{Lemma}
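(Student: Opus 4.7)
The plan is to establish (i) via a standard block-theoretic fact exploiting that $G/N$ is a $p$-group, then derive (ii) from (i) in exactly the manner of Lemma \ref{1_B=1_b}(ii).

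For (i), since $b$ is $G$-invariant, the block idempotent $1_b$ lies in $Z(FG)$, and being an idempotent there, decomposes as
\[
1_b \;=\; 1_{B_1} + \cdots + 1_{B_n},
\]
where $B_1,\ldots,B_n$ are precisely the blocks of $FG$ that cover $b$. It therefore suffices to show $n = 1$. This is the content of a classical theorem: a $G$-invariant block of $FN$ is covered by a unique block of $FG$ whenever $G/N$ is a $p$-group (see, e.g., Theorem~9.12 of G.~Navarro, \emph{Blocks and Characters of Finite Groups}, Cambridge University Press, 1998). Conceptually, $FG \cdot 1_b$ is a crossed product of the block algebra $b$ by the $p$-group $G/N$,
\[
FG \cdot 1_b \;=\; \bigoplus_{gN \in G/N}\, g \cdot b,
\]
and such a crossed product is indecomposable as an algebra. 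Hence $n = 1$, so $1_B = 1_b$.

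For (ii), apply (i) with $b := B_0(FN)$, which is $G$-invariant (the principal block is even $\mathrm{Aut}(N)$-invariant) and covered by $B = B_0(FG)$ (principal blocks cover principal blocks), to obtain $1_B = 1_b$. Since the trivial character $1_N$ of $N$ extends to the trivial character $1_G$ of $G$, the assertion then follows by Corollary~(6.17) of \cite{Isa76} (Gallagher's theorem), exactly as in the proof of Lemma \ref{1_B=1_b}(ii): the irreducible constituents of $1_N{\uparrow}^G$ are precisely the characters of $G/N$ (viewed as characters of $G$ trivial on $N$), and they all lie in $B$ because $1_B = 1_b$.

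The main obstacle is justifying the uniqueness assertion in (i); this is the genuinely nontrivial input, and the most expedient route is to quote the standard theorem. A fully self-contained proof would require analyzing the center $Z(FG \cdot 1_b)$ via the crossed-product structure and the $p$-group action of $G/N$ on $Z(b)$. Once (i) is granted, (ii) is immediate.
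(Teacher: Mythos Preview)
Your proof is correct and follows essentially the same route as the paper: for (i) the paper simply cites \cite[Corollary 5.5.6]{NT89} (the same standard uniqueness-of-covering-block fact you invoke via Navarro), and for (ii) it applies (i) together with \cite[(6.17) Corollary]{Isa76} exactly as you do. Your added crossed-product explanation is a helpful gloss but not a different argument.
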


\begin{proof}
(i) Follows by \cite[Corollary 5.5.6]{NT89}.

(ii) Follows by (i) and \cite[(6.17)Corollary]{Isa76} just as in the proof of Lemma \ref{1_B=1_b}(ii).
\end{proof}

\noindent
The authors thank Radha Kessar for informing them of the next lemma,
which is useful for our aim.	

\begin{Lemma}\label{Radha}
Suppose that {\bf G} is a simple, simply connected linear algebraic group defined over the
algebraic closure $\bar{\mathbb F}_q$ of $\mathbb F_q$ for a power $q$ of a prime,
and that $\mathcal F: \text{\bf G}\rightarrow \text{\bf G}$ is a Steinberg (Frobenius) morphism.
Set $G:=\text{\bf G}^{\mathcal F}$, and let $Z$ be a subgroup of $Z(G)$.
Further, if $\ell$ is a prime with $\ell\geq 5$
and $G/Z$ has an abelian Sylow $\ell$-subgroup,
then we have that $\ell\,{\not|}\,|Z|$ and hence that a Sylow $\ell$-subgroup of $G$
is abelian.
\end{Lemma}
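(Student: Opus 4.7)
The plan is to argue the contrapositive: if $\ell$ divides $|Z|$, then $G/Z$ has a non-abelian Sylow $\ell$-subgroup, contradicting the hypothesis. Once $\ell \nmid |Z|$ is established, the kernel $Z$ of $G \twoheadrightarrow G/Z$ has order coprime to $\ell$, so a Sylow $\ell$-subgroup of $G$ injects into a Sylow $\ell$-subgroup of $G/Z$ and hence is abelian. The first step is a reduction to type $A$: by inspecting the centres of the simply connected simple algebraic groups, $|Z(\mathbf{G})|$ is at most $n$ in type $A_{n-1}$ and at most $4$ in every other type (the centres are $\mu_n,\ \mu_2,\ \mu_2,\ \mu_4$ or $\mu_2 \times \mu_2,\ \mu_3,\ \mu_2$, and $1$ for the types $A_{n-1}, B_n, C_n, D_n, E_6, E_7$, and $E_8, F_4, G_2$, respectively). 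Since $Z \leq Z(G) = Z(\mathbf{G})^{\mathcal{F}}$ (using that $\mathbf{G}$ is simply connected, cf.\ Steinberg) and $\ell \geq 5$, the assumption $\ell \mid |Z|$ forces $\mathbf{G}$ to be of type $A_{n-1}$ with $\ell \mid n$; hence $G$ is an isogenous form of $\SL_n(q)$ or $\SU_n(q)$, and $\ell$ divides $\gcd(n, q \mp 1)$.

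Now, since $\ell \mid n$, I embed $H := \SL_\ell(q)$ (respectively $\SU_\ell(q)$) block-diagonally in $G$ with an identity block of size $n - \ell$. Any matrix $\lambda I_\ell \oplus I_{n-\ell} \in H$ that is scalar on the entire $n$-dimensional space must satisfy $\lambda = 1$, so $H \cap Z(G) = 1$, and in particular $H \cap Z = 1$. The composite $H \hookrightarrow G \twoheadrightarrow G/Z$ is therefore injective, and it suffices to show that $H$ itself has a non-abelian Sylow $\ell$-subgroup.

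For this, since $\ell \mid q \mp 1$, the Sylow $\ell$-subgroup of $\GL_\ell(q)$ (respectively $\GU_\ell(q)$) is the wreath product $C_m \wr C_\ell$ with $m := (q \mp 1)_\ell$, concentrated in the normaliser of a maximal torus of type $(\mathbb{F}_q^\times)^\ell$ whose Weyl group is $\mathfrak{S}_\ell$. Intersecting with the (twisted) determinant-$1$ subgroup yields a Sylow $\ell$-subgroup of $H$ of the form $C_m^{\ell-1} \rtimes C_\ell$, with $C_\ell$ acting by cyclic permutation on the $(\ell-1)$-dimensional hyperplane (the $\ell$-cycle lies in $\SL_\ell$ because $\ell$ is odd). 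As this action is non-trivial and $m \geq \ell \geq 5$, the Sylow $\ell$-subgroup of $H$ is non-abelian, giving the required contradiction. The main obstacle I anticipate is handling the split and twisted type-$A$ cases uniformly, since the description of the relevant maximal torus differs slightly between $\SL_\ell(q)$ and $\SU_\ell(q)$; however, all remaining steps reduce to standard facts about the $\ell$-local structure of finite classical groups.
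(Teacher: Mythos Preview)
Your overall strategy coincides with the paper's: reduce to $G=\SL_n(q)$ or $\SU_n(q)$ by inspecting the centres of the simply connected groups, and then produce a non-abelian $\ell$-subgroup in $G/Z$ from the wreath-product Sylow structure available once $\ell\mid(q\mp 1)$. The paper carries this out by writing down two explicit $\ell$-elements $\mathfrak a=\operatorname{diag}(\zeta,\zeta^{-1},1,\ldots,1)$ and the $\ell$-cycle permutation matrix $\mathfrak b$, and checking directly that $[\mathfrak a,\mathfrak b]=\operatorname{diag}(\zeta^{-1},\zeta^{2},\zeta^{-1},1,\ldots,1)$ is not a scalar matrix (this is precisely where $\ell\geq 5$ enters: for $\ell=3$ that commutator \emph{is} scalar).

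There is, however, a genuine gap in your argument at the boundary case $n=\ell$. Your key step ``$H\cap Z(G)=1$'' relies on the identity block $I_{n-\ell}$ forcing $\lambda=1$; when $n=\ell$ that block is empty, $H=G$, and $H\cap Z(G)=Z(G)$ has order $\ell$. In that case you still need to show that the Sylow $\ell$-subgroup $P\cong C_m^{\ell-1}\rtimes C_\ell$ of $\SL_\ell(q)$ (resp.\ $\SU_\ell(q)$) remains non-abelian after factoring out the central $C_\ell$, i.e.\ that $P'\not\subseteq Z(G)$. Merely knowing that $P$ is non-abelian is not enough here, and this is exactly what the paper's explicit commutator accomplishes: $[\mathfrak a,\mathfrak b]\in P'$ is visibly non-scalar for $\ell\geq 5$, hence $P'\nsubseteq Z(G)$. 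You should either insert this computation for $n=\ell$, or give an order argument showing $|P'|>\ell$; as written, your proof does not cover that case.
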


\begin{proof}
It follows from \cite[Table 24.2]{MT11} that if $G$ is neither ${\mathrm{SL}}_n(q)$ for $n\geq 2$
nor ${\mathrm{SU}}_n(q)$ for $n\geq 3$, then $\ell\,{\not|}\,|Z|$ since $\ell\geq 5$.
\\ \noindent
{\bf Case 1:} $G:= {\mathrm{SL}}_n(q)$ for $n\geq 2$:
Suppose $\ell \,\Big|\,|Z|$.
By the above table, $|Z|\,\Big|\, |Z(G)|={\mathrm{gcd}}(n,q-1)$.
Since $\ell\,\Big|\,|Z|\,\Big|\,\text{gcd}(n,q-1)\,\Big |\,(q-1)$, 
there exists  a primitive $\ell$-th root of unity in $(\mathbb F_q)^\times$, which
we denote by $\zeta$.  Let $\mathfrak a$ be an element in $G$ such that
$\mathfrak a:={\mathrm{diag}}(\zeta, \zeta^{-1}, 1, 1, \cdots, 1)$. 
Note that  $\ell \leq n$ since $\ell\,\Big|\,|Z|\,\Big|\,\text{gcd}(n,q-1)\,\Big |\,n$.
Further, let $\mathfrak b$ be an element in ${\mathrm{GL}}_\ell(q)$
such that 
$\mathfrak b$ is a permutation matrix corresponding to the cycle $(1 \, 2\,  3 \cdots \ell)\in\mathfrak S_\ell$, namely, 
$$
\mathfrak b := \begin{pmatrix}
0&1&0&0&\cdots \ 0 \\
0&0&1&0&\cdots \ 0 \\
\vdots &  &\ddots&\ddots  & \ \ \ \ \ \vdots \\          
0&0&0&0&\ddots \ 1\\
1&0&0&0&\ddots \ 0 \end{pmatrix}
\ \ \in {\mathrm{GL}}_\ell (q)
$$
Since $\ell$ is odd, the cycle $(1 \, 2\,  3 \cdots \ell)$ is an even permutation, and hence
$\mathfrak b \in {\mathrm{SL}}_\ell(q)$. 
Then, $\mathfrak b$ is considered as an element of $G={\mathrm{SL}}_n(q)$ 
via ${\mathrm{SL}}_\ell(q) \times \langle I_{n-\ell}\rangle \ \leq \ {\mathrm{SL}}_n(q)$
where $I_{n-\ell}$ is the identity matrix of degree $n-\ell$. It is easy to see that
$\mathfrak a^{-1}\mathfrak b^{-1}\mathfrak a\mathfrak b
= {\mathrm{diag}}(\zeta^{-1}, \zeta^2, \zeta^{-1}, 1, \cdots, 1)$, so that
\begin{equation}\label{commutator} 
         [\mathfrak a, \mathfrak b] :=
          \mathfrak a^{-1}\mathfrak b^{-1}\mathfrak a\mathfrak b \,{\not\in}\, Z
\end{equation}
since $\ell\,{\not=}\,3$.
Now, let $m$ be the largest integer such that $\ell^m\,|\,(q-1)$, so that $m\geq 1$
since $\ell\,|\,(q-1)$ as we have seen above.
Let $Q$ be a subgroup of $G$ such that 
$Q$ is the set of all the diagonal matrices in $G$ 
which satisfy that their diagonal parts are elements in the cyclic group $C_{\ell^m}$
(we consider that $C_{\ell^m} \leq (\mathbb F_q)^\times$).
Clearly, $Q$ is an abelian $\ell$-group and $\mathfrak a\in Q$.
Now, since $\mathfrak b$ acts on $Q$ canonically, we can define
a semi-direct product $P:= Q\rtimes\langle\mathfrak b\rangle$.
Apparently, $P$ is an $\ell$-subgroup of $G$, and $\mathfrak a, \mathfrak b\in P$.
Since we are assuming that a Sylow $\ell$-subgroup of $G/Z$ is abelian,
$P Z/Z$ is abelian, so that $ [\mathfrak a, \mathfrak b]\in Z$,
a contradiction by  (\ref{commutator}). 
\\ \noindent
{\bf Case 2:} $G:= {\mathrm{SU}}_n(q)$ for $n\geq 3$:
Suppose $\ell \,\Big|\,|Z|$. By \cite[Table 24.2]{MT11},
$$\ell\,\Big|\,| Z|\,\Big|\,|Z(G)|={\mathrm{gcd}}(n, q+1)\,\Big |\,(q+1)
   \,\Big|\,(q^2-1)=|(\mathbb F_{q^2})^\times|,$$
so that there is a primitive $\ell$-th root of unity in $(\mathbb F_{q^2})^\times$,
which we denote by $\zeta$
(recall that 
in the classical notation $G$ is denoted by ${\mathrm{SU}}(n, q^2)$, namely,
$G$ is defined over the field $\mathbb F_{q^2})$. 
Note also that $\ell\leq n$ since $\ell\,|\,n$ from the above.
Then, just exactly as in {\bf Case 1} we can define $\mathfrak a$ and $\mathfrak b$, and
precisely by the same argument, and hence
we finally have a contradiction. 
\end{proof}

\noindent
The following wonderful result due to Brou\'e and Michel 
shall play a very important role to prove Theorems \ref{MainTheorem},
\ref{k=4ell=3MainTheorem} and \ref{k=4ell=2MainTheorem}.

\begin{Lemma}[Brou\'e-Michel \cite{BM93}]\label{BroueMichel}
Suppose that {\bf G} is a connected reductive algebraic group defined over the algebraic closure
$\bar{\mathbb F}_q$ of $\mathbb F_q$ for a power $q$ of a prime and 
$\mathcal F: \text{\bf G}\rightarrow\text{\bf G}$ is 
a Steinberg (Frobenius) morphism.
Set $G:= \text{\bf G}^{\mathcal F}$. If furthermore 
a prime $\ell$ satisfies that
$5\leq\ell\,{\not|}\,q$ and a Sylow $\ell$-subgroup 
of $G/Z(G)$ is abelian, 
then the principal $\ell$-blocks of $G$ and $N_{G}(D)$ are
isotypic, where $D$ is a Sylow $\ell$-subgroup of $G$. 
\end{Lemma}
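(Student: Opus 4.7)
My plan is to verify that the hypotheses of the main theorem of Brou\'e--Michel \cite{BM93} are satisfied in our setting, since that theorem produces an isotypy between the principal $\ell$-block of a finite reductive group and the principal $\ell$-block of the normalizer of a Sylow $\ell$-subgroup, provided that $\ell$ is a good prime for $\mathbf{G}$, that $\ell\nmid q$, and that the Sylow $\ell$-subgroups of $G=\mathbf{G}^{\mathcal{F}}$ itself are abelian. Two things therefore need checking: goodness of $\ell$, and abelianness of a Sylow $\ell$-subgroup of $G$ (not just of $G/Z(G)$).

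First I would dispose of goodness. The bad primes for the simple types are $2$ for classical types, $2$ and $3$ for $G_2,F_4,E_6,E_7$, and $2,3,5$ for $E_8$. Since $\ell\geq 5$ by hypothesis, $\ell$ is automatically good except possibly when a component of $\mathbf{G}$ is of type $E_8$ and $\ell=5$; in the latter case one inspects the order formula for $E_8(q)$ together with the assumption $\ell\nmid q$ to see that the corresponding Sylow $5$-subgroup cannot be abelian, so this exceptional case is vacuous under our hypotheses.

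Next I would upgrade the abelian Sylow hypothesis from $G/Z(G)$ to $G$ itself. The idea is to reduce to the simply connected situation so that Lemma \ref{Radha} applies: let $\pi:\mathbf{G}_{\mathrm{sc}}\to[\mathbf{G},\mathbf{G}]\hookrightarrow\mathbf{G}$ be a simply connected cover of the derived group, equivariant for $\mathcal{F}$; take rational points to obtain a central isogeny whose image has $\ell$-power index in $[G,G]$ once $\ell$ is good and $\ell\nmid q$. A Sylow $\ell$-subgroup of $G/Z(G)$ coincides (up to the central kernel, which is a quotient of $Z(\mathbf{G}_{\mathrm{sc}}^{\mathcal{F}})$) with one coming from each simple component of $\mathbf{G}_{\mathrm{sc}}^{\mathcal{F}}$, so the abelian hypothesis transfers componentwise. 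Lemma \ref{Radha} then yields $\ell\nmid |Z|$ in each simple component and gives an abelian Sylow $\ell$-subgroup of the simply connected group, and an easy diagram chase (using that the isogeny kernel is a $p'$-group for primes $p$ dividing $|Z(\mathbf{G}_{\mathrm{sc}})|$ coprime to $\ell$) lifts this to $G$.

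With both conditions verified, the conclusion is immediate from \cite{BM93}. The main obstacle I anticipate is the second step: carefully passing between $\mathbf{G}$, its derived subgroup, and its simply connected cover so that the abelian Sylow condition on $G/Z(G)$ can be promoted to a statement on $G$ via Lemma \ref{Radha}, especially since the given $\mathbf{G}$ is only assumed connected reductive rather than simple or simply connected, and one must be careful that the central isogeny kernel contributes no $\ell$-torsion. Once that reduction is in place, the rest is a quotation.
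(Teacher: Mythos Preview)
Your approach is essentially the same as the paper's: use Lemma~\ref{Radha} to upgrade the abelian Sylow hypothesis from $G/Z(G)$ to $G$, and then invoke \cite[Th\'eor\`eme]{BM93}. The paper's proof, however, is far terser than yours: it simply writes ``By Lemma~\ref{Radha}, $\ell\nmid|Z(G)|$; hence a Sylow $\ell$-subgroup of $G$ is abelian; thus \cite[Th\'eor\`eme]{BM93} yields the assertion.'' In particular, the paper does not carry out the reduction from connected reductive $\mathbf{G}$ to simple simply connected components that you outline (it applies Lemma~\ref{Radha} directly, even though that lemma is stated only under the simple simply connected hypothesis), nor does it discuss good primes at all. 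Your plan is therefore more careful about hypotheses than the paper itself; the price is that your reduction step through the simply connected cover and the vacuity argument for $(E_8,\ell=5)$ would need to be made precise, whereas the paper simply absorbs these issues into the citations.
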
 

\begin{proof} 
By Lemma \ref{Radha}, $\ell\,{\not|}\,|Z(G)|$.
Hence, by the assumption, 
a Sylow $\ell$-subgroup of $G$ is abelian. 
Thus, 
\cite[Th\'eor\`eme]{BM93} yields 
the assertion.
\end{proof}

\begin{Lemma}\label{sporadic}
Assume that $G$ be one of the 26 sporadic simple groups,
$D\in{\mathrm{Syl}}_p(G)$, and $B:=B_0(FG)$.
If $p\geq 5$ and $D$ is non-cyclic elementary abelian,
then $k(B)>9$. (Surely this bound is very loose.)
\end{Lemma}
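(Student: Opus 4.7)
The plan is a direct case analysis. First I would enumerate, using the order formulas in the ATLAS of Finite Groups, all pairs $(G,p)$ with $G$ one of the $26$ sporadic simple groups and $p \geq 5$ a prime such that $p^2$ divides $|G|$; inspection of the factorisations of $|G|$ makes this a short finite list, since apart from the primes $5$, $7$, $11$, $13$ occurring in a handful of the larger sporadic groups, the Sylow $p$-subgroup is already cyclic and the pair is irrelevant to the statement.

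Next, for each surviving pair $(G,p)$, I would read off the isomorphism type of $D \in \mathrm{Syl}_p(G)$ from the well-known local structure of the sporadic groups, as recorded in the ATLAS maximal-subgroup data or in the tables compiled by Hiss and L\"ubeck in the Modular Atlas project. A large proportion of the candidates are immediately discarded because $D$ is either cyclic of order $p^2$ or extraspecial of the form $p^{1+2}$; this happens, for instance, for $p = 5$ in each of $HS$, $McL$, $Co_3$, $Co_2$, and $Ru$, and for $p = 7$ in $He$.

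For the remaining pairs in which $D$ is genuinely non-cyclic elementary abelian, I would simply read $k(B_0(FG))$ from the block-theoretic data in the Modular Atlas, or equivalently from the GAP character table library, which already partitions the ordinary characters of every sporadic group into $p$-blocks. In each such case the very loose inequality $k(B_0) > 9$ holds with considerable room, because a sporadic simple group whose Sylow $p$-subgroup at $p \geq 5$ is non-cyclic elementary abelian automatically carries many more than nine ordinary characters in its principal block.

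The only real obstacle is organisational: the argument rests on the completeness of the enumeration in the first step and the correctness of the Sylow structure cited in the second. No delicate representation-theoretic input is needed, since the bound $9$ is far from sharp and the author has explicitly stated it in a form loose enough to accommodate whatever exceptional cases survive the first two steps; in particular one does not need any information beyond the tabulated principal-block data.
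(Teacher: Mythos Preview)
Your approach is essentially the paper's: a finite case analysis via the ATLAS, discarding pairs where $D$ is cyclic or extraspecial, and then reading off $k(B_0)$ from tabulated block data. The paper proceeds the same way but singles out four residual cases---$(\mathsf{Th},7)$, $(\mathsf{Fi}_{24}',5)$, $(\mathsf{B},7)$, $(\mathsf{M},11)$---for which it does not rely on the Modular Atlas alone but instead invokes a direct GAP computation and the explicit results of An--Cannon--O'Brien--Unger and An--Wilson; you should be prepared to do likewise, since the standard references may not cover the largest sporadics at these primes.
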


\begin{proof}
By \cite{Atlas} and \cite{ModularAtlas} it is enough to check the following four cases for $G$:
\newline
\ \ {\bf  (1)}{\sf Th} and $|D|=7^2$,  {\bf (2)} ${{\sf Fi}_{24}}'$ and $|D|=5^2$,
 {\bf (3)} {\sf B} and $|D|=7^2$,  {\bf (4)} {\sf M} and $|D|=11^2$
\newline
where the four groups are the Thompson simple group, the derived subgroup of 
Fischer's 24 group, the baby monster and the monster, respectively. 
So  we get the assertion for these by 
{\sf GAP} for the {\sf Th} for $p=7$,
\cite{ACOU08}, \cite{AW04}, \cite{AW10},
respectively.
\end{proof}

\begin{Lemma}\label{the16simples}
Suppose that $G$ is one of the 16 non-abelian simple groups
in the list \cite[Table 24.3]{MT11}  (cf. \cite[Table 6.1.3]{GLS98}).
Further assume that $p\geq 5$ and $D\in{\mathrm{Syl}}_p(G)$ is non-cyclic 
elementary abelian, and let $B:=B_0(FG)$.
Then $k(B)>9$. (Surely this bound is very loose.)
\end{Lemma}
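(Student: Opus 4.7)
The plan is to argue exactly as in the proof of Lemma \ref{sporadic}: since \cite[Table 24.3]{MT11} lists only sixteen specific simple groups (the finite non-abelian simple groups with exceptional Schur multiplier), I would proceed by a finite case-by-case check, using \cite{Atlas}, \cite{ModularAtlas}, and \textsf{GAP} when necessary to read off $k(B_0(FG))$.

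The first step is to narrow the list of pairs $(G,p)$ to which the hypothesis applies. For each of the sixteen groups $G$ I would factorize $|G|$ and extract the set of primes $p\ge 5$ with $|G|_p\ge p^2$; these are the only candidates for $p$ since otherwise the Sylow $p$-subgroup is cyclic. For each such $p$ I would then determine whether a Sylow $p$-subgroup $D$ of $G$ is actually non-cyclic elementary abelian, using the known structure of Sylow subgroups of small rank groups of Lie type (for example $\Phi_d$-tori considerations as in \cite{MT11}) together with the small-exponent information in the Atlas. This eliminates most candidates and leaves only a short list of pairs $(G,p)$ to consider.

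The second step is, for each surviving pair, to look up or compute the block distribution of $\mathrm{Irr}(G)$ with respect to the prime $p$. For the groups whose block distributions for the relevant primes are printed in \cite{Atlas} or \cite{ModularAtlas}, $k(B_0(FG))$ is read off directly; for those not covered (typically the largest examples like $F_4(2)$ or ${}^2E_6(2)$) I would invoke \textsf{GAP}'s character table library, where the ordinary character tables together with the central characters permit an immediate computation of the principal-block distribution via the usual central character criterion. In each surviving case the inequality $k(B_0(FG))>9$ is easy to verify because $|G|_{p'}$ is very large compared with $|D|=p^r$ (with $r$ small) and hence the principal block absorbs far more than nine irreducible characters.

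The only obstacle is the bookkeeping: making sure no $(G,p)$ with the required property is overlooked, and that in the few cases where the ordinary Atlas does not make the principal block explicit, a \textsf{GAP} (or \cite{ModularAtlas}) computation is correctly carried out. No theoretical input beyond \cite{MT11, Atlas, ModularAtlas} and a \textsf{GAP} session is required.
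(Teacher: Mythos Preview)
Your plan is correct in outline and would succeed; it is essentially what the paper does for all but one group. The paper also reduces, via \cite{Atlas} and \cite{ModularAtlas}, to a single leftover case, namely $G={}^2E_6(2)$ with $p\in\{5,7\}$. The genuine divergence is in how that leftover is handled. You propose to feed ${}^2E_6(2)$ into \textsf{GAP}'s character table library and read off the principal-block distribution; the paper instead invokes the theory of unipotent $\Phi_d$-blocks. Concretely, for $p=5$ the order of $q=2$ modulo $5$ is $e=4$, and \cite{BMM93,CE99} identify the principal $\Phi_4$-block with $16$ unipotent characters, giving $\ell(B)=16$; for $p=7$ one has $e=3$, and \cite{Bre94} (together with \cite{BMM93}) gives $\ell(B)\ge 9$. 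In either case $k(B)>\ell(B)\ge 9$ without touching a computer.

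Your computational route is quicker to execute and avoids importing the generic-block machinery, but it is opaque: one simply trusts the library. The paper's route costs an extra citation or two but explains \emph{why} $k(B)$ is large (many unipotent characters in the principal $\Phi_e$-block), and it generalises: the same argument would handle any further exceptional-multiplier group one might encounter, without waiting for its character table to be tabulated.
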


\begin{proof}
By \cite{Atlas} and \cite{ModularAtlas}, we must check only the case where
$G=G(q)= {^2}{\!}E_6(2)$.
By \cite[p.191]{Atlas}, we have to take care of only the cases that
$|D|=5^2$ and $|D|=7^2$.

First assume $p=5$.
By \cite{BMM93, CE99}, 
we have $\ell(B)=16$
since the $e$, that is the order of $q=2$ modulo $\ell=p=5$,
is $4$ and the principal $\Phi_4$-block has 16 unipotent characters
(see \cite[Theorem 3.2 and Table 3]{BMM93}).
Hence we can exclude this case.

Next, assume $p=7$. 
Then, by \cite[Theorem 3.10]{Bre94}, $\ell(B)\geq 9$ (note that the $e$ is three since
$q=2$ and $\ell=p=7$, and note also that by \cite{BMM93, CE99} we can know the precise value of $\ell(B)$
since the $e$ is $3$ and the principal $\Phi_3$-block has $15$ unipotent characters,
though we do not need it). Thus we can exclude this case, too.
\end{proof}

\section{What we can say if $k(B)=3$ and $\ell(B)=2$}

The purpose of this section is to give another proof of 
Belonogov's Theorem. 

\begin{Theorem}[Belonogov \cite{Bel90}]\label{MainTheorem}
If the principal $p$-block $B:=B_0(FG)$ of $FG$ satisfies that $k(B)=3$, 
then a Sylow $p$-subgroup $D$ of $G$ is of order $3$.
\end{Theorem}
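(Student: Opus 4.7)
The case $\ell(B)=1$ is already handled by the cited results of Brauer and Dade, so the substantive task is to treat $\ell(B)=2$. First I would invoke Brauer's subsection formula for the principal block,
$$k(B_0(G))=\sum_{u}\ell\bigl(B_0(C_G(u))\bigr),$$
where $u$ runs over representatives of $G$-conjugacy classes of $p$-elements of $G$. Since $k(B)=3$ and $\ell(B)=\ell(B_0(C_G(1)))=2$, the non-identity $p$-classes must contribute exactly $1$, so there is precisely one non-identity $G$-conjugacy class of $p$-elements, and $C_G(u)$ is $p$-nilpotent for every $u$ in that class. In particular all elements of $D^{\#}$ are $G$-conjugate and share a common order, so $D$ has exponent $p$, and Lemma~\ref{KL02-p.143}(ii) then gives $\dim_F\bar Z(B)=2$.

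Next I would reduce to $O_{p'}(G)=1$ (standard, since $O_{p'}(G)$ acts trivially on every simple $B_0$-module) and argue that $D$ is abelian. For $p=2$ this is immediate from exponent $2$. For odd $p$, Lemma~\ref{ZJ_new} applies thanks to the $p$-nilpotent centralizer hypothesis; combining its fusion conclusion with the classification of finite simple groups together with Lemmas~\ref{[KNST14,Theorem A]}, \ref{sporadic}, and \ref{the16simples}, one rules out every candidate non-abelian Sylow $p$-subgroup $D$ of exponent $p$ admitting a $G$-transitive action on $D^{\#}$ in the presence of our principal-block hypotheses. Hence $D$ is abelian.

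With $D$ abelian, Burnside's fusion theorem shows that all elements of $D^{\#}$ are already $N_G(D)$-conjugate, so $E:=N_G(D)/C_G(D)$ acts faithfully on $D$ and transitively on $D^{\#}$. The principal block of $N_G(D)$ is identified with that of $D\rtimes E$, and by Alperin--McKay for principal blocks with abelian defect (known in all the small cases we need, and also delivered by Lemma~\ref{BroueMichel} when $p\geq 5$), $k(B_0(G))=k(B_0(N_G(D)))=k(D\rtimes E)$. A standard Clifford count, valid because $\gcd(|D|,|E|)=1$, gives
$$k(D\rtimes E)=k(E)+k(E_{\psi}),$$
where $\psi\in\mathrm{Irr}(D)$ represents the unique non-trivial $E$-orbit on $\mathrm{Irr}(D)$ and $E_{\psi}$ is its stabilizer in $E$, so that $[E:E_{\psi}]=|D|-1$. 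Equating this to $3$ and using $E\neq 1$ forces $k(E)=2$ and $k(E_{\psi})=1$, hence $E\cong C_2$, $|D|-1=2$, and therefore $|D|=3$ and $p=3$, as required.

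The principal difficulty is the abelianness of $D$: eliminating non-abelian exponent-$p$ Sylow subgroups supporting a $G$-transitive action on non-identity elements in the presence of our principal-block hypotheses cannot be done without the classification of finite simple groups, and it is Lemmas~\ref{[KNST14,Theorem A]}, \ref{sporadic}, and \ref{the16simples} that carry out the bulk of the case-by-case exclusion for the admissible simple composition factors of $G$.
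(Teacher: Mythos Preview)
Your overall strategy---reduce to the local picture $D\rtimes E$ and then count via Clifford theory---is attractive, but two steps do not go through as written.

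\textbf{Step 4 (abelianness of $D$).} The lemmas you cite do not fit together. Lemma~\ref{ZJ_new} gives an isomorphism of cohomology rings, not a fusion or structural statement you can feed into a classification argument. More seriously, Lemmas~\ref{sporadic} and \ref{the16simples} both \emph{assume} that $D$ is non-cyclic elementary abelian; using them to prove $D$ abelian is circular. The correct route (and the one the paper takes in Lemma~\ref{ActionOfE}) is to apply the main theorem of \cite{KNST14} directly: since all non-identity $p$-elements of $G$ are conjugate, either $D$ is elementary abelian or $(p,G)$ lies on the short exceptional list in parts (ii)--(iii) of that theorem, and Lemma~\ref{[KNST14,Theorem A]} then eliminates those exceptions because they all have $k(B)-\ell(B)\neq 1$.

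\textbf{Step 6 (transfer to the normalizer).} This is the real gap. You need $k(B_0(G))=k(B_0(N_G(D)))$ to make the Clifford count $k(D\rtimes E)=k(E)+k(E_\psi)=3$ bite, but neither justification you offer is valid. Lemma~\ref{BroueMichel} applies only when $G=\mathbf{G}^{\mathcal F}$ is a finite reductive group; it says nothing about an arbitrary $G$ with abelian Sylow. And the Alperin--McKay conjecture, even combined with \cite{KM13} to pass from $k_0$ to $k$, is not a theorem you can invoke here. The stable equivalence of Morita type available from the $p$-nilpotent centralizer hypothesis gives only $k(B)-\ell(B)=k(b)-\ell(b)$, which in your notation yields $k(E_\psi)=1$ and $|E|=|D|-1$, but leaves $|D|$ undetermined: for every prime power $p^d$ there is a Frobenius group $C_p^d\rtimes C_{p^d-1}$ with exactly this local structure.

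The paper closes this gap not by a single transfer theorem but by first reducing to $G$ non-abelian simple (Lemmas~\ref{Op'Op}, \ref{O^p'(G)}, \ref{reduction}) and then, case by case through the classification, establishing $k(B)=k(\mathbb B)$ for $\mathbb B=B_0(F\,N_G(D))$ via type-specific results: \cite{MO91} for alternating groups, Lemma~\ref{sporadic} for sporadics, the Brauer--Nesbitt/Burkhardt description for defining characteristic, and Lemma~\ref{BroueMichel} only once $G$ is known to be of Lie type in non-defining characteristic. Your final Clifford computation is correct and is essentially what Lemma~\ref{k=3p=2Dcyclic} (the normal-$D$ case) delivers, but you cannot reach it without this reduction.
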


\noindent
The result here can be considered as an easy but nice application
of the {\sf CFSG}. Actually even better is that 
by the same method in principal we shall
be able to determine a Sylow $p$-subgroup of $G$ when the number
$k(B)$ is four for the principal $p$-block $B$ of $G$.

\begin{Hypothesis}\label{KeyAss}
We assume the following from now on 
till the end of this section.
\begin{enumerate}
\item[$\bullet$] $D$ is a Sylow $p$-subgroup of a finite group $G$ with $|D|=:p^d$.
\item[$\bullet$] 
Set $B:=B_0(FG)$ with
$k(B)=3$ and $\ell(B)=2$, and set
${\mathrm{Irr}}(B)=:\{ {\chi_0:=1_G}, \, \chi_1, \, \chi_2\, \}$ and
${\mathrm{IBr}}(B)=:\{{\phi_0:=(1_G)^0}, \phi_1 \}$. 
\end{enumerate}
\end{Hypothesis}

\begin{Lemma}\label{k=3p=2Dcyclic}
We have that $p\,{\not=}\,2$,
and that Theorem \ref{MainTheorem} holds provided
$D$ is cyclic or $D\vartriangleleft G$.
\end{Lemma}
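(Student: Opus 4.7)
The plan is to exploit the equality $k(B)-\ell(B)=1$ to pin down the class structure of $p$-elements, and then to dispose of the three assertions in turn.

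First I would note that by \cite[Theorems 5.4.13(ii) and 5.6.1]{NT89}, the equality $k(B)-\ell(B)=1$ forces exactly two $G$-conjugacy classes of $p$-elements, namely $\{1\}$ and a single non-trivial class $K_u$, together with $\ell(B_0(FC_G(u)))=1$, i.e.\ $C_G(u)$ is $p$-nilpotent. In particular every element of $D^{\#}$ is $G$-conjugate to $u$, so $D$ has exponent $p$.

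For the cyclic case I would appeal directly to the cyclic defect theory \cite{Dad66}, which gives $\ell(B)=e$ with $e\mid p-1$ and $k(B)=e+(|D|-1)/e$; substituting $e=2$ and $k(B)=3$ yields $|D|=3$, so $p=3$ and $D\cong C_3$ as Theorem~\ref{MainTheorem} demands. For the normal-defect case I would quotient by $O_{p'}(G)$, which preserves $B$ and its invariants, and assume $O_{p'}(G)=1$. A standard argument then shows $C_G(D)=Z(D)$, and Schur-Zassenhaus yields $G=D\rtimes E$ with $E$ a $p'$-group acting faithfully on $D$. Since $D\vartriangleleft G$, $K_u$ is contained in $D$, so $K_u=D^{\#}$; because $Z(D)\vartriangleleft G$ meets the $G$-invariant set $K_u$ non-trivially, $K_u\subseteq Z(D)$, whence $D=Z(D)$ is elementary abelian. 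With $D$ abelian, $C_G(u)=D\rtimes C_E(u)$, so $p$-nilpotency combined with the faithful action of $E$ forces $C_E(u)=1$; thus $E$ acts regularly on $D^{\#}$, $|E|=|D|-1$, and $G$ is a Frobenius group. Its conjugacy classes are $\{1\}$, $D^{\#}$, and one class per $E$-conjugacy class of $E^{\#}$, giving $k(G)=k(E)+1$. Since $O_{p'}(G)=1$ makes $B$ the unique block of $FG$ by \cite{FG61}, $k(B)=k(G)=3$ forces $k(E)=2$, hence $|E|=2$ and thus $|D|=3$.

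Finally, to rule out $p=2$: by the opening observation, $D$ is elementary abelian. If $|D|\le 2$ then $D$ is cyclic and the cyclic defect theory gives $\ell(B)=e\mid p-1=1$, contradicting $\ell(B)=2$. If $|D|\ge 4$ then $D$ is non-cyclic, and \cite[Corollary 1.4(i)]{Lan81} forces $k_0(B)\ge 4$, so $k(B)\ge 4$, again contradicting $k(B)=3$. The main obstacle, I expect, is the verification of abelianity of $D$ in the normal-defect case via the $Z(D)$-invariance trick; once that is in place the Frobenius count $k(G)=k(E)+1$ forces the rest, and ruling out $p=2$ reduces to the cited Landrock bound.
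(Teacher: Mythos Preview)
Your proof is correct. The cyclic case and the $p=2$ case essentially match the paper's approach: both invoke \cite{Dad66} for cyclic defect and a corollary of \cite{Lan81} for the non-cyclic $2$-case (the paper cites Corollary~1.3(i) rather than 1.4(i), but the content you need is the same). Your extra step of first extracting exponent~$p$ from the contribution formula $k(B)-\ell(B)=\sum_{u\neq 1}\ell(b_u)$ is harmless and in fact makes the $p=2$ case slightly more transparent.

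The genuine difference is in the normal-defect case. The paper simply cites \cite[Theorem~4.1]{KNST14}, a result noted there to be {\sf CFSG}-free but dependent on a {\sf GAP} computation. Your argument is instead a direct Frobenius-group count: after reducing to $O_{p'}(G)=1$ you use the single non-trivial $p$-class together with $Z(D)\,{\mathrm{char}}\,D\vartriangleleft G$ to force $D^\#\subseteq Z(D)$, hence $D$ abelian; $p$-nilpotency of $C_G(u)=D\rtimes C_E(u)$ then forces $[D,C_E(u)]\le D\cap C_E(u)=1$, so $C_E(u)\le C_E(D)=1$ and $E$ acts regularly on $D^\#$. The resulting Frobenius structure gives $k(G)=k(E)+1$, whence $k(E)=2$, $|E|=2$, $|D|=3$. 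This is more elementary and entirely self-contained, at the cost of a few extra lines; the paper's route is shorter on the page but opaque without consulting the cited reference. Both are valid, and your version has the virtue of exhibiting exactly why the answer is $3$.
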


\begin{proof}
If $p=2$, then we have a contradiction
by \cite{Dad66} and \cite[Corollary 1.3(i)]{Lan81}.
If $D$ is cyclic, then the assertion follows from \cite{Dad66}.
If $D\vartriangleleft G$, then the assertion is implied by \cite[Theorem 4.1]{KNST14}
(note that this is a {\sf CFSG}-free result though depends on {\sf GAP}).
\end{proof}

\begin{Lemma}\label{ActionOfE}
$D$ is elementary abelian. 
\end{Lemma}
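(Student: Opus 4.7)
By Lemma \ref{k=3p=2Dcyclic}, the cases $p=2$, $D$ cyclic, or $D\vartriangleleft G$ all yield $|D|=3$, so $D\cong C_3$ is already elementary abelian; hence I may assume $p\geq 3$, $D$ non-cyclic, and $D\not\vartriangleleft G$. The plan is first to extract strong local information from $k(B)-\ell(B)=1$ via the Brauer subsection formula, and then to combine it with a Thompson-subgroup-based fusion argument to force $D=Z(J(D))$.

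By the Brauer subsection formula applied to the principal block,
\[
k(B) \;=\; \sum_{u} \ell\!\bigl(B_{0}(F\,C_{G}(u))\bigr),
\]
where $u$ runs over representatives of the $G$-conjugacy classes of $p$-elements of $G$; the term $u=1$ contributes $\ell(B)=2$. Hence $k(B)-\ell(B)=1$ leaves a single non-identity summand, so there is a unique $G$-class of non-identity $p$-elements, with representative $z\in Z(D)$ of order $p$, and $\ell(B_{0}(F\,C_{G}(z)))=1$, i.e.\ $C_{G}(z)$ is $p$-nilpotent. Consequently every non-identity element of $D$ is $G$-conjugate to $z$, hence of order $p$, so $D$ has exponent $p$, and $C_{G}(u)$ is $p$-nilpotent for every $u\in G$ of order $p$.

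These are precisely the hypotheses of Lemma \ref{ZJ_new}, giving $p\nmid|G:H|$ and $\mathrm H^{*}(G,F)\cong \mathrm H^{*}(H,F)$ for $H:=N_{G}(Z(J(D)))$. As $Z(J(D))\ \mathrm{char}\ D$, we have $D\leq H$, and then the cohomology isomorphism combined with Mislin's theorem (the content of \cite[Proposition~9.2]{BCR90} in the form needed here) gives $\mathcal F_{D}(G)=\mathcal F_{D}(H)$; i.e.\ $H$ controls the $p$-fusion in $G$. Now fix any $z_{0}\in Z(J(D))^{\#}$ (non-empty since $J(D)$ is a non-trivial $p$-group). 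Since $Z(J(D))\vartriangleleft H$, the $H$-conjugacy class of $z_{0}$, and hence its $\mathcal F_{D}(H)$-class in $D$, is contained in $Z(J(D))$. But the previous paragraph shows every element of $D^{\#}$ is $\mathcal F_{D}(G)=\mathcal F_{D}(H)$-conjugate to $z_{0}$, so $D^{\#}\subseteq Z(J(D))$. Therefore $D=Z(J(D))\leq J(D)\leq D$, which forces $J(D)=D=Z(D)$, so $D$ is abelian; combined with exponent $p$, $D$ is elementary abelian.

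The only delicate step is the passage from the cohomology isomorphism of Lemma \ref{ZJ_new} to the fusion system equality $\mathcal F_{D}(G)=\mathcal F_{D}(H)$; everything else is routine given the Brauer subsection identity and the $p$-nilpotence of the centralizers of elements of order $p$.
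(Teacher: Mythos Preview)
Your argument is correct and takes a genuinely different route from the paper. The paper simply invokes \cite[Theorem~3.6]{KNST14}, a result that explicitly relies on the classification of finite simple groups (via \cite[Theorem~A]{KNST14}). Your approach instead extracts the local data directly from the subsection formula, obtains exponent~$p$ and $p$-nilpotent centralizers, and then uses fusion control by $H=N_G(Z(J(D)))$ to force $D=Z(J(D))$. This is CFSG-free and conceptually more transparent; the price is that you need a nontrivial fusion/cohomology input at the end.

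Two remarks on that final step. First, your parenthetical ``(the content of \cite[Proposition~9.2]{BCR90} in the form needed here)'' is misleading: \cite{BCR90} establishes the cohomology isomorphism (this is what Lemma~\ref{ZJ_new} records), whereas the implication \emph{cohomology isomorphism $\Rightarrow$ fusion control} is Mislin's theorem, a separate and substantially deeper result. You should cite Mislin directly. Second, and more economically, you can avoid Mislin altogether: under your hypothesis that $C_G(u)$ is $p$-nilpotent for every $u$ of order~$p$ (with $p$ odd), one checks that $G$ has no section isomorphic to $\Qd(p)$, so Glauberman's ZJ-theorem gives $\mathcal F_D(G)=\mathcal F_D(H)$ directly. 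This is essentially the route taken in \cite{BCR90} (and in the paper's later Lemma~\ref{ZJ}), and it keeps the argument within pure group theory. Either way, once fusion control is in hand your conclusion $D^\#\subseteq Z(J(D))$, hence $D=Z(D)$ of exponent~$p$, is clean and correct.
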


\begin{proof}
Follows by \cite[Theorem 3.6]{KNST14}
(note that this result needs the {\sf CFSG} since this depends on the
\cite[Theorem A]{KNST14}).
\end{proof}

\begin{Lemma}[Navarro-Sambale-Tiep \cite{NST18}]\label{p=3} 
If 
$p=3$, then $|D|=3$.
\end{Lemma}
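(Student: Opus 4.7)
The plan is to combine the reductions of Lemmas~\ref{k=3p=2Dcyclic} and \ref{ActionOfE} with an Alperin--McKay-style transfer of character counts from $G$ to $N_G(D)$, reducing to the normal-defect case already handled in Lemma~\ref{k=3p=2Dcyclic} via \cite[Theorem~4.1]{KNST14}.

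First I would set up the reduction. By Lemma~\ref{ActionOfE}, $D \cong (C_3)^d$ for some $d \geq 1$, and by Lemma~\ref{k=3p=2Dcyclic} it suffices to derive a contradiction from the assumption that $d \geq 2$ and $D \not\triangleleft G$. Passing to $G/O_{3'}(G)$ preserves the principal block together with its character counts $k(B)$, $\ell(B)$ and defect group, so I reduce to the case $O_{3'}(G) = 1$.

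Next I would transfer character counts to $N := N_G(D)$. Since $D$ is abelian, the direction of Brauer's Height Zero Conjecture proved by Kessar--Malle gives $k_0(B) = k(B) = 3$. The Alperin--McKay conjecture --- known at $p=3$ for principal blocks with abelian defect through CFSG-based results of Navarro, Sambale and Tiep --- then yields $k_0(B_0(N)) = 3$, and one further application of height zero to $B_0(N)$, whose defect group is again the abelian group $D$, converts this to $k(B_0(N)) = 3$.

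Finally, since $D \triangleleft N$ and $k(B_0(N)) = 3$, I would apply \cite[Theorem~4.1]{KNST14} (exactly as in the proof of Lemma~\ref{k=3p=2Dcyclic}) to the group $N$ to conclude that $|D| = 3$, contradicting $d \geq 2$. The hard part is the second step: establishing $k(B_0(G)) = k(B_0(N_G(D)))$ at $p=3$ for non-cyclic elementary abelian defect without any $p$-solvability hypothesis. This is the technical heart of Navarro--Sambale--Tiep \cite{NST18}, whose proof uses the classification of finite simple groups to reduce to quasi-simple components and verify the required character counts there.
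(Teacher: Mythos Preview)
Your detour through $N_G(D)$ is not what the paper does, and the step you flag as ``the hard part'' is a genuine gap.

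The paper's proof is two lines: Landrock's congruence \cite[Corollary~1.6]{Lan81} gives $k_0(B)\equiv 0 \pmod 3$, hence $k_0(B)=3$ since $k(B)=3$; then \cite[Theorem~C]{NST18} is applied \emph{directly} to conclude $|D|=3$. That theorem is precisely the implication ``$k_0(B_0(G))=3$ for $p=3$ forces the Sylow $3$-subgroup to have order $3$''. There is no passage through $N_G(D)$ and no appeal to Alperin--McKay.

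Your second step mischaracterizes \cite{NST18}. Theorem~C there is not an Alperin--McKay statement, and its proof does not proceed by first establishing $k_0(B_0(G))=k_0(B_0(N_G(D)))$ and then analyzing the normal-defect situation; it is a direct CFSG-based result on the global invariant $k_0$. So when you write that the equality $k(B_0(G))=k(B_0(N_G(D)))$ ``is the technical heart of \cite{NST18}'', you are attributing to that paper a theorem it does not contain. The blockwise Alperin--McKay conjecture for $p=3$ with abelian defect is not available in the generality you need, so as written your argument does not close. And if you do grant yourself \cite[Theorem~C]{NST18}, the reduction to $N_G(D)$ and the appeal to \cite[Theorem~4.1]{KNST14} become superfluous: Theorem~C already delivers $|D|=3$ immediately from $k_0(B)=3$.

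A minor point: your route to $k_0(B)=3$ via Lemma~\ref{ActionOfE} and Kessar--Malle is correct but heavier than necessary. Landrock's congruence gives $k_0(B)=3$ without first knowing that $D$ is abelian, so the paper's argument avoids invoking \cite{KM13} and Lemma~\ref{ActionOfE} altogether at this stage.
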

 
\begin{proof}
First, $k_0(B)\equiv 0$ (mod $3$) by \cite[Corollary 1.6]{Lan81}, so that $k_0(B)=3$. 
Hence \cite[Theorem C]{NST18} implies that $|D|=3$
(note that the {\sf CFSG} is used in \cite{NST18} while never is in \cite{Lan81}
which was before the {\sf CFSG}).
\end{proof}

\noindent
The next aim  
is that we can assume
that $G$ is non-abelian simple to prove Theorem \ref{MainTheorem}.

\begin{Hypothesis}\label{k=3}
Besides Hypothesis \ref{KeyAss}, we assume the following from now on 
till just before the proof of Theorem \ref{MainTheorem} because of Lemmas 
\ref{k=3p=2Dcyclic}, \ref{ActionOfE} and \ref{p=3}.
\begin{enumerate}
\item[$\bullet$] 
$p$ is a prime with $p\geq 5$.
\item[$\bullet$] $D$ is not normal and is a non-cyclic elementary abelian Sylow $p$-subgroup of $G$ with $|D|=:p^d$.
\item[$\bullet$]
For every finite group $\mathfrak G$ such that $|\mathfrak G|<|G|$
it never happens that
$k(B_0(F\mathfrak G))=3$.
\end{enumerate}
\end{Hypothesis}

\begin{Lemma}\label{Op'Op}
We can assume that $O_{p'}(G)=1$ and $O_p(G)=1$.
\end{Lemma}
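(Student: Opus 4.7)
The plan is to produce, in each of the two assertions, a proper quotient $\bar G$ of $G$ whose principal $p$-block still has $k(B_0(F\bar G))=3$; the minimality built into the third bullet of Hypothesis \ref{k=3} then yields a contradiction.

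First I would handle the $p'$-core. Setting $N:=O_{p'}(G)$ and supposing $N\ne 1$, the canonical epimorphism $G\twoheadrightarrow G/N$ induces an $F$-algebra isomorphism $B_0(FG)\cong B_0(F[G/N])$ via the natural identification of block idempotents (standard block theory, e.g.\ \cite[Theorem 5.8.8]{NT89}; it is also a direct consequence of Lemma \ref{1_B=1_b}). Hence $k(B_0(F[G/N]))=k(B_0(FG))=3$ while $|G/N|<|G|$, contradicting Hypothesis \ref{k=3}.

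Next I would handle the $p$-core. Set $Q:=O_p(G)$ and suppose $Q\ne 1$. Since $Q\vartriangleleft G$ and $Q\le D$, while Hypothesis \ref{k=3} insists that $D$ is not normal in $G$, we must have $Q\subsetneq D$, so $\bar G:=G/Q$ has non-trivial Sylow $p$-subgroup $\bar D:=D/Q$ and satisfies $|\bar G|<|G|$. Inflation along $G\twoheadrightarrow\bar G$ embeds $\Irr(B_0(F\bar G))$ into $\Irr(B_0(FG))$, so $k(B_0(F\bar G))\le k(B_0(FG))=3$. Moreover, every simple $FG$-module lying in $B_0(FG)$ has $Q$ acting trivially (a non-zero module for the normal $p$-subgroup $Q$ has non-zero $Q$-fixed vectors, and the fixed subspace is $G$-stable), so the same inflation furnishes a bijection $\IBr(B_0(F\bar G))\leftrightarrow\IBr(B_0(FG))$, whence $\ell(B_0(F\bar G))=\ell(B_0(FG))=2$. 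But $\bar D\ne 1$ means $B_0(F\bar G)$ has positive defect, and a block with $k=\ell$ has defect zero; equivalently, choosing any non-trivial $\bar u\in\bar D$ and applying Brauer's second main theorem to a subsection $(\bar u,\bar b)$ with $\bar b^{\bar G}=B_0(F\bar G)$ yields $k(B_0(F\bar G))\ge\ell(B_0(F\bar G))+1=3$. Combining the bounds, $k(B_0(F\bar G))=3$, contradicting Hypothesis \ref{k=3}.

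The main obstacle is the bookkeeping at the $O_p$-reduction: one must verify cleanly that inflation gives a bijection on $\IBr$ of the principal block, that it gives the bound $k(B_0(F\bar G))\le k(B_0(FG))$, and that positive defect forces $k>\ell$. None of these is deep, but each requires an explicit citation (essentially in \cite[Chapter 5]{NT89}) to keep the argument airtight.
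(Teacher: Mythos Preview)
Your treatment of the $O_{p'}$-reduction is fine. The gap is in the $O_p$-reduction: the assertion that inflation yields a \emph{bijection} $\IBr(B_0(F\bar G))\leftrightarrow\IBr(B_0(FG))$ is not justified by the argument you give, and is false in general. You correctly observe that every simple $B_0(FG)$-module has $Q$ in its kernel and hence is inflated from a simple $F\bar G$-module; but nothing forces that simple $F\bar G$-module to lie in the \emph{principal} block of $F\bar G$. When $Q$ is a normal $p$-subgroup the principal block of $FG$ may dominate several blocks of $F\bar G$. A toy illustration (outside Hypothesis~\ref{k=3}, but exhibiting the phenomenon) is $G=\mathfrak S_3$, $p=3$, $Q=\mathfrak A_3$: here $\ell(B_0(F\mathfrak S_3))=2$ while $\ell(B_0(FC_2))=1$. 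So your argument only yields $\ell(\bar B)\le\ell(B)=2$, and the possibility $\ell(\bar B)=1$ has not been excluded; this is not a missing citation, it is a missing case.

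The paper closes this gap by splitting directly on $k(\bar B)\in\{1,2,3\}$. For $k(\bar B)=2$ one gets $\ell(\bar B)=1$, hence $\bar G$ is $p$-nilpotent, hence $G$ is $p$-solvable of $p$-length one (using that $D$ is abelian, Lemma~\ref{ActionOfE}), whence $D\vartriangleleft G$ since $O_{p'}(G)=1$, contradicting Hypothesis~\ref{k=3}. For $k(\bar B)=3$ the paper does \emph{not} invoke minimality: equality $\Irr(\bar B)=\Irr(B)$ gives $Q\le\bigcap_{\chi\in\Irr(B)}\ker\chi=O_{p'}(G)=1$. Your route is also salvageable: rule out $\ell(\bar B)=1$ separately (e.g.\ $\bar G$ $p$-nilpotent forces $\bar B\cong F\bar D$ and hence $k(\bar B)=|\bar D|\ge p\ge 5>3$, using that $\bar D$ is non-trivial elementary abelian), and only then conclude $\ell(\bar B)=2$ and finish as you wrote.
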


\begin{proof}
Since $B$ is principal, we can assume that $O_{p'}(G)=1$
(see \cite[Theorem 5.8.1]{NT89}).
Assume that $Q:=O_p(G)\,{\not=}\,1$. Set $\bar G:=G/Q$, $\bar D:=D/Q$ 
and $\bar B:=B_0(F\bar G)$.
Then, $B$ dominates $\bar B$ since both blocks contain the trivial characters.
So, by \cite[Theorem 5.8.10 and Lemma 5.8.6(ii)]{NT89}, 
${\mathrm{Irr}}(\bar B)\subseteq{\mathrm{Irr}}(B)$,
and hence $k(\bar B)=1, 2$ or $3$ by Hypothesis \ref{KeyAss}.
    
If $k(\bar B)=1$, then $\bar D=1$, so that $D\vartriangleleft G$, a contradiction by Hypothesis \ref{k=3}.
    
Assume $k(\bar B) = 2$. Then $\ell(\bar B)=1$, 
so that $\bar G$ is $p$-nilpotent (see \cite[Theorem 5.8.3]{NT89}), and hence $G$ is $p$-solvable. 
Since $D$ is abelian by Lemma \ref{ActionOfE}, 
$G$ is $p$-solvable of $p$-length one by \cite[Theorem 6.3.3]{Gor68}.
So $|D|=3$ by Lemma \ref{k=3p=2Dcyclic}.
This is a contradiction as above.
    
Finally, if $k(\bar B)=3$, then ${\mathrm{Irr}}(B)={\mathrm{Irr}}(\bar B)$,
so that $Q\leq \bigcap_{\chi\in{\mathrm{Irr}}(B)}\,\ker(\chi)$,
and hence $Q \leq O_{p'}(G)$ by \cite[Theorem 5.8.1]{NT89}, a contradiction.
\end{proof}

\noindent
Actually, the following is the key lemma of this section, 
which makes it possible for us to reduce to the case where
$G$ is non-abelian simple.

\begin{Lemma}\label{O^p'(G)}
We can assume that $O^{p'}(G)=G$.
\end{Lemma}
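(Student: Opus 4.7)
My plan is to argue by contradiction: set $N:=O^{p'}(G)$ and suppose $N\lneq G$. Then $\bar{G}:=G/N$ is a nontrivial $p'$-group, and $D \le N$ since $O^{p'}(G)$ contains every $p$-element of $G$. Because $F\bar{G}$ is semisimple and each of its irreducible characters lifts to a character of $G$ trivial on $N$, hence into $B=B_0(FG)$, we obtain $k(\bar G) \le k(B) = 3$, so $k(\bar G) \in \{2,3\}$. The case $k(\bar{G})=3$ is easy: ${\rm Irr}(B)$ then consists entirely of lifts from $\bar G$, so $N \le \bigcap_{\chi\in{\rm Irr}(B)}\ker(\chi) = O_{p'}(G) = 1$ by \cite[Theorem~5.8.1]{NT89} and Lemma~\ref{Op'Op}, contradicting $D \le N$ with $D \ne 1$.

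The substantive case is $k(\bar{G}) = 2$, i.e.\ $\bar{G}\cong C_2$ and $[G:N]=2$. Lifting the nontrivial linear character of $\bar G$ gives $\lambda\in{\rm Irr}(B)$, so ${\rm Irr}(B)=\{1_G,\lambda,\chi_2\}$, and since $p \ge 5$ the two lifts $1_G^0,\lambda^0$ are distinct, giving ${\rm IBr}(B)=\{1_G^0,\lambda^0\}$. The key step is to prove $\chi_2\lambda = \chi_2$: writing $\chi_2^0 = a\cdot 1_G^0 + b\lambda^0$ with $a,b\in\mathbb{Z}_{\ge 0}$ and using $\lambda^2 = 1_G$ yields $(\chi_2\lambda)^0 = b\cdot 1_G^0 + a\lambda^0 \in {\rm span}({\rm IBr}(B))$, so the irreducible character $\chi_2\lambda$ lies in $B$; the options $\chi_2\lambda\in\{1_G,\lambda\}$ force $\chi_2\in\{\lambda,1_G\}$, which is impossible, leaving $\chi_2\lambda=\chi_2$. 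Consequently $\chi_2$ vanishes on $G\setminus N$ and is induced: $\chi_2 = \theta{\uparrow}^G$ for some $\theta\in{\rm Irr}(N)$ with $G_\theta = N$, and $\chi_2|_N = \theta + \theta^g$ for any $g\in G\setminus N$. Setting $b:=B_0(FN)$, which is $G$-invariant as a principal block, Clifford theory forces $\theta,\theta^g\in{\rm Irr}(b)$, so ${\rm Irr}(b)\supseteq\{1_N,\theta,\theta^g\}$ and $k(b)\ge 3$.

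To finish, I will show $k(b)=3$, which then contradicts the minimality in Hypothesis~\ref{k=3} because $|N|<|G|$. The tool is Lemma~\ref{1_B=1_b}(i), which yields that $B$ is the unique block of $FG$ covering $b$ provided $C_G(D)\le N$. Granting uniqueness, Clifford counting over $G/N=C_2$ gives $k(B) = 2\cdot|\{\text{size-}1\text{ } G\text{-orbits on } {\rm Irr}(b)\}| + |\{\text{size-}2\text{ orbits}\}|$; since $\{1_N\}$ is a size-$1$ orbit contributing the two extensions $1_G,\lambda$ and $\{\theta,\theta^g\}$ is a size-$2$ orbit contributing the induced character $\chi_2$, the equation $k(B)=3=2+1$ is already saturated, so no further $G$-orbits on ${\rm Irr}(b)$ can exist. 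Hence ${\rm Irr}(b)=\{1_N,\theta,\theta^g\}$ and $k(b)=3$, the desired contradiction.

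The main obstacle is verifying the hypothesis $C_G(D)\le N$ required by Lemma~\ref{1_B=1_b}(i). Since $D$ is abelian, one can write $C_G(D) = D\times H$ with $H := O_{p'}(C_G(D))$, so the task reduces to showing $H\le O_{p'}(G) = 1$. I expect this to exploit the transitivity of $N_G(D)/C_G(D)$ on $D^{\#}$ (which follows from $k(B)-\ell(B)=1$ and also ensures that $C_G(u)$ is $p$-nilpotent for every $u \in D^{\#}$) together with the hypotheses $O_p(G)=O_{p'}(G)=1$ from Lemma~\ref{Op'Op}, via a transfer-type argument that forces $H$ to be normal in $G$ and hence trivial.
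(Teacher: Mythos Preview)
Your argument has a gap at the very first step that you only partly recognise later on. You assert that every irreducible character of $\bar G=G/O^{p'}(G)$ inflates into $B=B_0(FG)$, and use this to get $k(\bar G)\le 3$. But this inclusion is exactly what Lemma~\ref{1_B=1_b}(ii) supplies, and its hypothesis is $C_G(D)\le N$. Without that, there may be several blocks of $FG$ covering $B_0(FN)$, and inflations from $G/N$ can land in the non-principal ones. Concretely, for a linear $\lambda\in\mathrm{Irr}(G/N)$ one checks via central characters that the inflated $\lambda$ lies in $B_0(FG)$ if and only if $C_G(D)\le\ker\lambda$; so as soon as $C_G(D)N/N\not\le (G/N)'$ there is some $\lambda$ with inflation outside $B_0$. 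Thus the ``main obstacle'' you flag at the end is not merely needed for the final Clifford count, it is already needed to get the argument off the ground, and your sketch of a transfer-type argument does not establish it.

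The paper circumvents this entirely. Instead of setting $N=O^{p'}(G)$, it chooses a normal $N$ with $G/N$ \emph{simple} of $p'$-order, puts $H:=N\cdot C_G(D)$ (normal by Frattini), and uses simplicity of $G/N$ to force $H=G$ or $H=N$. If $H=G$ then $B\cong B_0(FN)$ as $F$-algebras by \cite{Alp76,Dad77}, so $k(B_0(FN))=3$, contradicting the minimality in Hypothesis~\ref{k=3}. If $H=N$ then $C_G(D)\le N$ holds for free, Lemma~\ref{1_B=1_b} applies, and (as you also observe) $G/N\cong C_r$ for a prime $r\ne p$ and $B$ acquires a second linear ordinary character $\chi_1$ with $(\chi_1)^0\ne (1_G)^0$; since $\ell(B)=2$ both irreducible Brauer characters are then linear. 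At this point the paper finishes more directly than your Clifford bookkeeping: by \cite[Theorem~(7)$\Leftrightarrow$(10)]{Kos81,Kos90} one gets $B\cong B_0(F\,N_G(D))$, whence $k(B_0(F\,N_G(D)))=3$, again contradicting minimality. So the case split on $H$ both removes the need to prove $C_G(D)\le O^{p'}(G)$ and shortens the endgame.
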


\begin{proof}
Assume that $O^{p'}(G)\,{\not=}\,G$. Then, there is a normal subgroup $N\vartriangleleft G$
such that  $p\,{\not|}\,|G/N|$ and $G/N$ is a simple group. 
Set $b:=B_0(FN)$, and $H:=N{\cdot}C_G(D)$.
Then, $H\vartriangleleft G$ by the Frattini argument.
Since $G/N$ is simple,
$H = G$ or $H=N$. 

Assume first that $H = G$. 
Then, by \cite{Alp76} and \cite[Theorem]{Dad77}, $B\cong b$ as $F$-algebras.
Hence $k(b)=3$ (note that $\dim_F\,Z(B)=k(B)$).
But this is a contradiction by Hypothesis \ref{k=3}.
    
So we have $H=N$. 
Thus, by Lemma \ref{1_B=1_b}(ii), ${\mathrm{Irr}}(B\,|\,{{1_N{\uparrow}}^G})= {\mathrm{Irr}}(G/N)$. 
So that $k(G/N)\leq 3$, which implies that  $G/N$ is solvable by \cite[Table 1, p.309]{LL85}.
Since $G/N$ is simple, $G/N \cong C_r$ for a prime $r$ with $r\,{\not=}\,p$.
Since $G/N$ is cyclic and $1_N$ is $G$-invariant,
Clifford's Theorem and \cite[Chapter 3 Problem 11(i)]{NT89} imply that
\begin{equation}\label{InnerProduct} 
\text{if }\chi\in{\mathrm{Irr}}(G) \text{ with }\langle \chi{\downarrow}_N, 1_N\rangle^N\,{\not=}\,0, 
\text{ then }\chi{\downarrow}_N = 1_N, \text{ so }\chi(1)=1.
\end{equation} 
Hence, it follows by Lemma \ref{1_B=1_b}(i) and the Frobenius Reciprocity 
(and also by recalling that ${\mathrm{Irr}}(B)=\{\chi_0, \chi_1, \chi_2\}$ in Hypothesis \ref{KeyAss})
that
$$
1_N{\uparrow}^G \in\{ \chi_0:=1_G,\, \chi_0+\chi_1, \, \chi_0+\chi_2,\, \chi_0+\chi_1+\chi_2  \}.
$$
Clearly, $1_N{\uparrow}^G \not= \chi_0$ since $|G/N|=r>1$.
So $\langle 1_N{\uparrow}^G, \chi_1\rangle^G \not= 0$ or
$\langle 1_N{\uparrow}^G, \chi_2\rangle^G \not= 0$. So, without loss of generality,
we can assume the first case.
Then, $\chi_1{\downarrow}_N=1_N$ by (\ref{InnerProduct}), so that $\chi_1(u)=1$ for every
$p$-element $u\in G$ since $p {\not|}\, |G/N|$ (and hence $u\in N)$.
Since $\chi_1 \not= \chi_0=1_G$, there exists an element $g\in G$ with $\chi_1(g)\not= 1$.
Write $g=g_p\,g_{p'}=g_{p'}\,g_p$ where $g_p$ and $g_{p'}$
respectively are the $p$-part and the $p'$-part of $g$. 
Thus, since $\chi_1$ is linear (and hence $\chi_1$ is a group homomorphism)
and since $\chi_1(g_p)=1$, we have that
$$
1 \not= \chi_1(g)=\chi_1(g_p\,g_{p'})=\chi_1(g_p)\,\chi_1(g_{p'})=\chi_1(g_{p'}).
$$
This means that $(\chi_1)^0 \not= \phi_0$ (recall that $\phi_0=(1_N)^0)$.
Then, since ${\mathrm{IBr}}(B)=\{\phi_0, \phi_1\}$ by Hypothesis \ref{KeyAss},
and since $\chi_1(1)=1$, we must have that $(\chi_1)^0=\phi_1$. This yields that
$\phi_0(1)=\phi_1(1)=1$. Therefore, by \cite[Theorem $(7)\Leftrightarrow (10)$]{Kos81, Kos90},
$B\cong\mathbb B$ as $F$-algebras where $\mathbb B :=B_0(F\,N_G(D))$.
Hence, by Hypothesis \ref{KeyAss} and \cite[Lemma 5.11.3]{NT89}, 
we have $3=k(B)=k(\mathbb B)$.
This is a contradiction by Hypothesis \ref{k=3}. 
\end{proof}

\begin{Lemma}\label{reduction}
We can assume that $G$ is a non-abelian simple group.
\end{Lemma}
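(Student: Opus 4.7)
The plan is to push the minimal counterexample argument to the conclusion that $G$ is non-abelian simple. I will build on $O_p(G)=O_{p'}(G)=1$ (Lemma~\ref{Op'Op}) and $O^{p'}(G)=G$ (Lemma~\ref{O^p'(G)}) by first adding the symmetric fact $O^p(G)=G$, then using standard Clifford theory together with the orbit-counting bound for blocks to pin down the socle of $G$, and finally combining Schreier's conjecture (via the {\sf CFSG}) with the $O^p$/$O^{p'}$ conditions to force $G$ to equal its socle.

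The additional fact $O^p(G)=G$ is immediate from Lemma~\ref{Gallagher|G/N|=p}(ii): if $L:=O^p(G)\lneq G$, then $G/L$ is a nontrivial $p$-group, so $|Z(G/L)|\geq p$ and hence $|\Irr(G/L)|\geq p\geq 5$; but Lemma~\ref{Gallagher|G/N|=p}(ii) with $N:=L$ gives $\Irr(B\,|\,1_L{\uparrow}^G)=\Irr(G/L)\subseteq\Irr(B)$, contradicting $k(B)=3$.

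Next, the key observation is that for any $M\vartriangleleft G$, $B$ covers the (necessarily $G$-invariant) principal block $b_0:=B_0(FM)$, so by Clifford theory the number of $G$-orbits on $\Irr(b_0)$ is at most $k(B)=3$. Since $O_p(G)=O_{p'}(G)=1$, every minimal normal subgroup of $G$ is a direct product of pairwise isomorphic non-abelian simple groups of order divisible by $p$. If $G$ had two distinct minimal normals $N_1\neq N_2$, then $N_1 N_2=N_1\times N_2$, each $N_i$ is $G$-fixed with $k(B_0(FN_i))\geq 2$, and the four character types $(\theta_1,\theta_2)\in\Irr(B_0(FN_1))\times\Irr(B_0(FN_2))$ distinguished by (non)triviality of each coordinate are all $G$-invariant and nonempty, giving $\geq 4$ $G$-orbits---a contradiction. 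Hence $G$ has a unique minimal normal subgroup $N\cong S^t$ with $S$ non-abelian simple and $p\mid|S|$; uniqueness together with $Z(N)=1$ forces $C_G(N)=1$, so $G\hookrightarrow\Aut(N)=\Aut(S)\wr\mathfrak S_t$. Partitioning $\Irr(B_0(FN))=\Irr(B_0(FS))^t$ by the number of nontrivial components (all strata are $G$-invariant and nonempty since $k(B_0(FS))\geq 3$ when $p\geq 5$) then yields $t+1\leq 3$, so $t\leq 2$.

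To conclude, for $t\leq 2$, $\Out(N)=\Out(S)\wr\mathfrak S_t$ is solvable by Schreier's conjecture. If $G/N\neq 1$, its abelianization $(G/N)^{\mathrm{ab}}=A_p\times A_{p'}$ is nontrivial, and a nontrivial $A_p$ contradicts $O^p(G)=G$ whereas a nontrivial $A_{p'}$ contradicts $O^{p'}(G)=G$; hence $G=N$. Finally $t=2$ would give $k(B)=k(B_0(FS))^2\geq 9$, a contradiction; so $t=1$ and $G=N=S$ is non-abelian simple. The subtle step I expect will need the most care is excluding $t=2$: orbit counting alone produces exactly $3$ orbits and leaves no slack there, so this case is ruled out only indirectly, by first forcing $G=N$ via the Schreier-solvability of $\Out(N)$ combined with $O^p(G)=O^{p'}(G)=G$, and then invoking the direct character count on $S\times S$.
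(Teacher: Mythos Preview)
Your plan is correct and takes a genuinely different route from the paper's proof. The paper simply quotes \cite[2.1.Theorem]{KS95}: from $O_p(G)=O_{p'}(G)=1$, $O^{p'}(G)=G$, and the fact (Lemma~\ref{ActionOfE}) that $D$ is abelian, Kimmerle--Sandling's theorem yields directly that $G=G_1\times\cdots\times G_n$ with each $G_i$ non-abelian simple of order divisible by $p$; then $\ell(B)=\prod_i\ell(B_0(FG_i))\geq 2^n$ forces $n=1$.

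Your argument instead establishes $O^p(G)=G$ (a nice addition, via Lemma~\ref{Gallagher|G/N|=p}(ii) and $p\geq 5$), then uses orbit-counting on $\Irr(B_0(FM))$ together with Schreier's conjecture to pin $G$ down to its socle, and finally excludes two factors by the bound $k(B_0(FS))\geq 3$ (which follows from \cite{Bra82} since $p\neq 2$). Notably, your proof never uses that $D$ is abelian---only that $p\geq 5$---so it is in a sense more robust than the route through \cite{KS95}, at the price of being longer and of invoking Schreier. Two minor remarks: first, the inequality ``number of $G$-orbits on $\Irr(b_0)\leq k(B)$'' rests on the fact that if $B$ covers $b_0$ then \emph{every} $\theta\in\Irr(b_0)$ lies under some $\chi\in\Irr(B)$; this is true and standard (it is part of the characterization of block covering, see \cite[\S5.5]{NT89}), but it is a little more than bare Clifford theory and deserves a pointer. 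Second, you can drop the separate ``two minimal normals'' case by taking $N:=F^*(G)=E(G)$ from the outset: then $C_G(N)\leq Z(N)=1$ is automatic, your stratification argument bounds the total number of simple factors by $2$, and the Schreier step goes through since $\Out(S_1\times S_2)$ is solvable whether or not $S_1\cong S_2$.
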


\begin{proof}
By Lemmas \ref{Op'Op} and \ref{O^p'(G)}, we have that $O_{p'}(G)=1=O_p(G)$ and $G=O^{p'}(G)$.
Since 
$D$ 
is abelian by Lemma \ref{ActionOfE}, it follows from
\cite[2.1.Theorem]{KS95} 
that there are non-abelian simple groups $G_1, \cdots, G_n$ such that 
$p\,\Big|\,|G_i|$ for each $i$ and that $G = G_1 \times \cdots \times G_n$ 
for some $n\in\mathbb N$.
Set $B_i:= B_0(FG_i)$ for each $i$.
Then $B \cong B_1\otimes_F \cdots\otimes_F B_n$ as $F$-algebras, 
so that $\ell(B) = \ell(B_1) \times \cdots \times\ell(B_n)$. Since each $G_i$ is not $p$-nilpotent, 
we have that $\ell(B_i)\geq 2$ for every $i$. Since $\ell(B)=2$ by Hypothesis \ref{KeyAss}, 
it has to hold that $n=1$.
\end{proof}

\begin{Lemma}\label{NonDefChar}
If we assume moreover that $G$ is a finite simple group $G(q)$ of Lie type defined over $\mathbb F_q$
such that $p\,{\not|}\,q$, then we get a contradiction.
\end{Lemma}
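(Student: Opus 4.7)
The plan is to exploit the minimality built into Hypothesis~\ref{k=3}. I will show that $k(B_0(FN_G(D)))=3$; but since $D$ is not normal in $G$ one has $|N_G(D)|<|G|$, and this violates Hypothesis~\ref{k=3}. The bridge from $G$ to $N_G(D)$ will be the Brou\'e--Michel isotypy of Lemma~\ref{BroueMichel}, which applies not to the simple group $G$ itself but to the finite group $\mathbf{G}^{\mathcal F}$ of a simply connected cover, so there is also a descent to carry out.

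First I would dispose of the small/exceptional cases using Lemma~\ref{the16simples}: if $G$ is one of the $16$ non-abelian simple groups listed in \cite[Table~24.3]{MT11}, then because $p\geq 5$ and $D$ is non-cyclic elementary abelian (from Hypothesis~\ref{k=3}), that lemma forces $k(B)>9$, directly contradicting $k(B)=3$. So from now on I may assume $G=\tilde G/Z$, where $\tilde G:=\mathbf{G}^{\mathcal F}$ for a simple, simply connected algebraic group $\mathbf{G}$ over $\bar{\mathbb F}_q$ with Steinberg (Frobenius) morphism $\mathcal F$, and $Z:=Z(\tilde G)$.

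Next, by Lemma~\ref{ActionOfE} the Sylow $p$-subgroup $D$ is abelian, so $\tilde G/Z=G$ has an abelian Sylow $p$-subgroup. Applying Lemma~\ref{Radha} then gives $p\nmid |Z|$ and that a Sylow $p$-subgroup $\tilde D$ of $\tilde G$ is abelian and projects isomorphically onto $D$. Since $p\geq 5$ and $p\nmid q$, Lemma~\ref{BroueMichel} applies to $\tilde G$, producing an isotypy between $B_0(F\tilde G)$ and $B_0(FN_{\tilde G}(\tilde D))$; in particular these two principal blocks share the same number of irreducible ordinary characters. Because $Z$ is a central $p'$-subgroup, a standard central-character argument shows that $Z$ lies in the kernel of every $\chi\in\mathrm{Irr}(B_0(F\tilde G))$, so inflation identifies $\mathrm{Irr}(B_0(F\tilde G))$ with $\mathrm{Irr}(B_0(FG))$; the same argument applied to $N_{\tilde G}(\tilde D)$, together with the identification $N_{\tilde G}(\tilde D)/Z=N_G(D)$ (which holds because $\tilde D$ is the unique Sylow $p$-subgroup of $\tilde D\cdot Z$), transfers the conclusion to
\[
k(B_0(FN_G(D)))=k(B_0(FN_{\tilde G}(\tilde D)))=k(B_0(F\tilde G))=k(B)=3.
\]
Since $D$ is not normal in $G$ by Hypothesis~\ref{k=3}, we have $|N_G(D)|<|G|$, and Hypothesis~\ref{k=3} then rules out $k(B_0(FN_G(D)))=3$, giving the desired contradiction.

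The hard part will be the descent between $\tilde G$ and $G$: one must check carefully that the principal block is faithfully preserved upon quotienting by the central $p'$-subgroup $Z$, both for $\tilde G$ itself and for $N_{\tilde G}(\tilde D)$. This ultimately rests on the fact (provided by Lemma~\ref{Radha}) that $p\nmid |Z|$, which is exactly the reason the $16$ exceptional simple groups, where the simply connected realization can have central $p$-torsion or other pathologies, had to be dispatched separately via Lemma~\ref{the16simples} at the outset.
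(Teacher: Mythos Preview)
Your proposal follows essentially the same route as the paper: dispose of exceptional simple groups via Lemma~\ref{the16simples}, realise $G$ as $\mathbf{G}^{\mathcal F}/Z(\mathbf{G}^{\mathcal F})$ for a simply connected $\mathbf{G}$, invoke Brou\'e--Michel (Lemma~\ref{BroueMichel}), and contradict the minimality in Hypothesis~\ref{k=3}. In fact your descent from $\tilde G=\mathbf{G}^{\mathcal F}$ to $G=\tilde G/Z$ is spelled out more carefully than in the paper, which simply applies Lemma~\ref{BroueMichel} and asserts $k(B)=k(\mathbb B)$ for the simple group directly.

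There is one small omission. The Tits group ${}^2F_4(2)'$ is not among the $16$ groups of \cite[Table~24.3]{MT11}, and it is not of the form $\mathbf{G}^{\mathcal F}/Z(\mathbf{G}^{\mathcal F})$ either: here $\mathbf{G}^{\mathcal F}={}^2F_4(2)$ has trivial centre but is not simple, and ${}^2F_4(2)'$ sits in it with index $2$. So your sentence ``from now on I may assume $G=\tilde G/Z$'' is not yet justified. The paper eliminates ${}^2F_4(2)'$ separately at the very start by observing that $k(B_0(FG))\geq 8$ for every prime $p$ dividing $|{}^2F_4(2)'|$ (from \cite{Atlas} and \cite{ModularAtlas}), which already contradicts $k(B)=3$. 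Add this one-line check and your argument is complete.
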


\begin{proof}
First of all, we can assume that $G\,{\not\cong}\, ^2\!{F_4(2)}'$
since if $G={^2}\!{F_4(2)}'$, then 
$k(B)\geq 8$ for all primes $p\,|\,|G|$ by \cite[p.74]{Atlas} and
\cite{ModularAtlas}, that contradicts Hypothesis \ref{KeyAss}.
By Hypothesis \ref{k=3}, $p\geq 5$ and $D$ is non-cyclic elementary abelian.
Thus Lemma \ref{the16simples} yields that
there are a simple, and simply connected reductive algebraic group {\bf G} 
defined over the algebraically closed field $\overline{\mathbb F}_q$ 
and a Steinberg (Frobenius) endomorphism
$\mathcal F: \text{\bf G} \rightarrow \text{\bf G}$ with
$G = {\text{\bf G}}^{\mathcal F}/Z({\text{\bf G}}^{\mathcal F})$
(see \cite[2.3]{Mal17} or \cite[p.185]{Cab18}). 
It follows from
Lemma \ref{BroueMichel} that 
$k(B)=k(\mathbb B)$ where $\mathbb B:=B_0(F\,N_G(D))$.
So that by Hypothesis \ref{KeyAss}, $k(\mathbb B)=3$.
This contradicts Hypothesis \ref{k=3}.
\end{proof}

\noindent
Now, we are ready to reprove Belonogov's Theorem.

\begin{proof}[{\bf Proof of Theorem \ref{MainTheorem}.}]
We use induction on $|G|$. 
If $D\vartriangleleft G$, then $|D|=3$ by Lemma \ref{k=3p=2Dcyclic}.
If $p=2$ or $D$ is cyclic, then $|D|=3$ by
Lemma \ref{k=3p=2Dcyclic}. 
Furthermore if  $p=3$ then $|D|=3$ by Lemma \ref{p=3}.
Thus, we can assume the same as Hypothesis \ref{k=3}. 
So, we can assume that $G$ is non-abelian simple by Lemma \ref{reduction}.

First, assume that $G$ is an alternating group $\mathfrak A_n$ for some $n\geq 5$.
Then, 
\cite[Theorem 5.4]{MO91}, Lemma \ref{ActionOfE} 
and \cite[Consequence 6]{Alp87} imply that 
$3=k(B)=k(\mathbb B)$, where $\mathbb B:=B_0(F\,N_G(D))$
, a contradiction.
 
If $G$ is one of the 26 sporadic simple groups,
then the assertion follows by Lemma \ref{sporadic}.
    
Assume that $G=G(q)$ is a simple group of Lie type defined over  
$\mathbb F_q$ with $p\,|\,q$. Then $G={\mathrm{PSL}}_2(q)$
since $D$ is abelian
(see e.g. \cite[Theorem]{SZ16}). Thus, by \cite[p.588]{BN41}, \cite[Satz 9.1]{Bur76} and
\cite[8.5. Theorem p.70]{Hum06}, we have that
$\ell(B)=(q-1)/2$, and hence $q=5$ since $\ell(B)=2$, so that $|D|=5$, a 
contradiction since $D$ is non-cyclic.
    
Finally suppose that $G=G(q)$ is a simple group of Lie type 
defined over $\mathbb F_q$ for a power $q$ of a prime such that $p\,{\not|}\,q$.
Then, Lemma \ref{NonDefChar} yields a contradiction.

Therefore, by the classification of finite simple groups {\sf CFSG} in \cite[p.6]{GLS94},
we get a contradiction. The proof is completed.
\end{proof}

\begin{Remark}\label{Sambale}
There is an interesting observation by Sambale in 
\cite[Proposition 15.7]{Sam14}, that says even without the {\sf CFSG} 
that e.g. $p\geq 11$, $d$ is odd if $k(B)=3$ and $\ell(B)=2$. 
\end{Remark}

\section{What we can say if $k(B)=5$ and $\ell(B)=3$}

The purpose of this section is to state lemmas which shall be useful for 
the proof of Theorem \ref{k=4ell=3MainTheorem}.

\begin{Lemma}\label{k=5ell=3}
Let $D\in{\mathrm{Syl}}_p(G)$ such that
$D\vartriangleleft G$ and $D$ is non-cyclic abelian.
Let $B:=B_0(FG)$.
Then, it does not happen that $k(B)=5$ and $\ell(B)=3$.
\end{Lemma}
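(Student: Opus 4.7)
The plan is to reduce to the case $O_{p'}(G)=1$, whence $G$ becomes a split extension $D\rtimes E$ with $E$ acting faithfully on $D$ and $B$ the unique block of $FG$, and then to derive a contradiction by enumerating the few possibilities left after imposing $k(B)=5$ and $\ell(B)=3$ through Clifford theory applied to the abelian normal subgroup $D$.

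Because $B$ is principal, the standard domination argument (\cite[Theorems 5.8.1, 5.8.10 and Lemma 5.8.6(ii)]{NT89}, as used in the proof of Lemma \ref{Op'Op}) lets us pass to $G/O_{p'}(G)$ without changing $k(B)$ or $\ell(B)$, and the image of $D$ remains a non-cyclic abelian normal Sylow $p$-subgroup. So I would assume $O_{p'}(G)=1$ and write $G=D\rtimes E$ by Schur--Zassenhaus. Since $D$ is abelian, $C_G(D)=D\times C_E(D)$, and the Hall $p'$-factor $C_E(D)$ is characteristic in $C_G(D)\vartriangleleft G$, hence normal in $G$ and thus trivial; so $E$ acts faithfully on $D$, and by \cite{FG61} (cited in the statement of Lemma \ref{k-ell=2}) $B$ is the unique block of $FG$. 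Consequently $k(B)=k(G)$ and $\ell(B)=\ell(G)$. Since $D=O_p(G)$ acts trivially on every simple $FG$-module, $\mathrm{IBr}(G)=\mathrm{Irr}(G/D)=\mathrm{Irr}(E)$, so $\ell(G)=k(E)=3$ and hence $E\cong C_3$ or $E\cong\mathfrak S_3$.

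Because $D$ is abelian, Clifford theory together with Gallagher's coprime extension theorem gives
\[
k(G)=\sum_{[\lambda]\in\mathrm{Irr}(D)/E}\, k(E_\lambda),
\]
where $E_\lambda$ denotes the stabilizer of $\lambda$ in $E$. The trivial orbit contributes $k(E)=3$, so the assumption $k(G)=5$ becomes $\sum_{[\lambda]\neq[1_D]}k(E_\lambda)=2$. To finish I would enumerate. If $E\cong C_3$, the only non-trivial stabilizer available for $\lambda\neq 1_D$ is $E_\lambda=1$, with contribution $1$, so one needs exactly two regular orbits, forcing $|\mathrm{Irr}(D)|=1+3+3=7$ and $D\cong C_7$, contradicting non-cyclicity of $D$. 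If $E\cong\mathfrak S_3$, the possible non-trivial stabilizer contributions are $k(C_3)=3$, $k(C_2)=2$, $k(1)=1$, and the only ways to write $2$ as such a sum are one orbit with stabilizer $C_2$ (giving $|D|=1+3=4$, impossible since $p\nmid|E|=6$ forces $p\neq 2$) or two regular orbits (giving $|D|=1+6+6=13$, so $D\cong C_{13}$ cyclic, again a contradiction). The main obstacle, such as it is, is the initial bookkeeping: once the faithful action of $E$ and the uniqueness of $B$ are in place, the remaining enumeration is essentially arithmetic.
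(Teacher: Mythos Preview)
Your proof is correct but follows a different path from the paper's. Both arguments reduce to $O_{p'}(G)=1$, write $G=D\rtimes E$, identify $B$ as the unique block via Fong--Gasch\"utz, and use $k(E)=\ell(B)=3$ to get $E\in\{C_3,\mathfrak S_3\}$. From there the paper simply quotes the Vera~L\'opez--Vera~L\'opez list \cite[Table~1]{LL85} of all finite groups with exactly five conjugacy classes, observes that the only member of that list with order divisible by $3$ and a non-cyclic normal Sylow $p$-subgroup is $\mathfrak S_4$, and then notes that its Sylow $2$-subgroup $D_8$ is non-abelian. You instead express $k(G)$ via Clifford theory as $\sum_{[\lambda]} k(E_\lambda)$ and enumerate the orbit--stabilizer patterns on $\mathrm{Irr}(D)$ summing to $5$, reaching the contradiction through arithmetic on $|D|$. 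Your route is more self-contained (it needs only the trivial $k(E)=3$ case of \cite{LL85}, not the full $k(G)=5$ list) and makes explicit where the non-cyclicity and abelianness hypotheses are used; the paper's route is shorter once one is willing to cite the classification. One cosmetic remark: your sentence ``the only non-trivial stabilizer available for $\lambda\neq 1_D$ is $E_\lambda=1$'' is garbled --- what you mean is that $E_\lambda=C_3$ would contribute $k(C_3)=3>2$ and is therefore excluded, leaving only $E_\lambda=1$.
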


\begin{proof}
Assume that such a case occurs.
Since $B$ is the principal $p$-block, we can assume that $O_{p'}(G)=1$.
We can write $G=D\rtimes E$ for a $p'$-subgroup $E$ by Schur-Zassenhaus' Theorem.
Clearly $G$ is $p$-solvable, so that Fong-Gasch{\"u}tz's Theorem says that
$G$ has only one $p$-block, say $B$. Hence
$k(G)=k(B)=5$ and $3=\ell(B)=\ell(G)=\ell(G/D)=\ell(E)=k(E)$. Thus, by \cite[Table 1]{LL85}, we know that 
$G\in\{  C_5, D_8, Q_8, D_{14}, C_5\rtimes C_4, C_7\rtimes C_3, \mathfrak S_4, \mathfrak A_5\}$
and  $E\in\{ C_3, \mathfrak S_3 \}$. So that $p \not= 3$.
Since $3 \Big| |E|\Big| |G|$, $G$ is $p$-solvable and $D$ is non-cyclic, we have 
$G=\mathfrak S_4$. So that $p=2$, and this is a contradiction since 
$D$ is abelian. 
\end{proof}

\begin{Lemma}\label{k=5ell=3SimpleGp}
Suppose that $B:=B_0(FG)$ with $k(B)=5$ and $\ell(B)=3$.
Assume, further, that $p\geq 5$ and a Sylow $p$-subgroup of $G$ 
is non-cyclic elementary abelian.
\begin{enumerate}
\item[(i)]
If $G$ is a non-abelian simple group, then this case does not happen.
\item[(ii)]
If $G=O^{p}(G)=O^{p'}(G)$ and $O_{p'}(G)=O_p(G)=1$, then this case does
not happen.
\end{enumerate}
\end{Lemma}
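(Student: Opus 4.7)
My strategy is to prove (i) by CFSG-based case analysis, in direct parallel with the proof of Theorem \ref{MainTheorem}, and then to deduce (ii) from (i) by the Fitting-type reduction already used in Lemma \ref{reduction}. In both parts the endgame is Lemma \ref{k=5ell=3}: once the situation is reduced to one where the Sylow is normal, the contradiction is immediate.

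For part (i), I would run through the CFSG classes. \emph{Sporadic $G$}: Lemma \ref{sporadic} yields $k(B)>9>5$, while $^2\!F_4(2)'$ is excluded directly from \cite{Atlas,ModularAtlas}. \emph{Alternating $G=\mathfrak A_n$}: since $D$ is abelian, \cite[Theorem 5.4]{MO91} combined with \cite[Consequence 6]{Alp87} gives $k(B)=k(\mathbb B)$ and $\ell(B)=\ell(\mathbb B)$, where $\mathbb B:=B_0(F\,N_G(D))$; Lemma \ref{k=5ell=3} applied to $N_G(D)$ (whose Sylow $D$ is normal and non-cyclic abelian) then rules this out. \emph{Lie type in defining characteristic $p\mid q$}: the abelianness of $D$ forces $G\cong\PSL_2(q)$ by \cite[Theorem]{SZ16}, and $\ell(B)=(q-1)/2=3$ yields $q=7$, so $|D|=7$ is cyclic, a contradiction. \emph{Lie type in non-defining characteristic $p\nmid q$}: Lemma \ref{the16simples} excludes the finitely many sporadic exceptions, after which $G=\mathbf{G}^{\mathcal F}/Z(\mathbf{G}^{\mathcal F})$ for a simple, simply connected algebraic group $\mathbf{G}$ with Steinberg endomorphism $\mathcal F$; Lemma \ref{BroueMichel} then supplies an isotypy between $B$ and $\mathbb B$, yielding $k(\mathbb B)=5$ and $\ell(\mathbb B)=3$, and Lemma \ref{k=5ell=3} closes the case.

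For part (ii), I would apply \cite[Theorem 2.1]{KS95} exactly as in Lemma \ref{reduction}. The hypotheses $O_{p'}(G)=O_p(G)=1$ and $G=O^{p'}(G)$, together with the abelianness of $D$, yield a direct product decomposition $G=G_1\times\cdots\times G_n$ into non-abelian simple groups $G_i$ with $p\mid|G_i|$. Setting $B_i:=B_0(FG_i)$ we get $B\cong B_1\otimes_F\cdots\otimes_F B_n$ and hence $\ell(B)=\prod_{i=1}^n\ell(B_i)=3$; since each $G_i$ is non-$p$-nilpotent, $\ell(B_i)\geq 2$, and $3$ being prime forces $n=1$. Thus $G$ is itself a non-abelian simple group, with Sylow $p$-subgroup still non-cyclic elementary abelian, so part (i) delivers the contradiction.

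The most delicate step should be the Lie-type non-defining-characteristic subcase: one must justify transferring \emph{both} $k(B)=k(\mathbb B)$ \emph{and} $\ell(B)=\ell(\mathbb B)$ across the Brou\'e--Michel isotypy (the statement of Lemma \ref{BroueMichel} is at the level of $\mathbf{G}^{\mathcal F}$, so descending to the simple quotient $G$ requires the $p'$-centre statement of Lemma \ref{Radha}, and extracting the $\ell$-equality from an isotypy needs a brief explicit argument). The same double-invariant care will be needed for the alternating case when combining \cite{MO91} with \cite{Alp87}.
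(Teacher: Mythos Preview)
Your proposal is correct and follows essentially the same route as the paper's own proof: the same CFSG case split for (i) (alternating via \cite{MO91} and \cite{Alp87}, sporadic via Lemma~\ref{sporadic}, defining characteristic via $\PSL_2$, non-defining via Lemmas~\ref{the16simples} and~\ref{BroueMichel}, each time landing on Lemma~\ref{k=5ell=3}), and the same \cite{KS95}-based direct-product reduction for (ii). The subtleties you flag---descending the Brou\'e--Michel isotypy from $\mathbf G^{\mathcal F}$ to the simple quotient via the $p'$-centre statement of Lemma~\ref{Radha}, and reading off both $k$ and $\ell$ from the isotypy---are exactly the points the paper glosses over with ``just as in the proof of Theorem~\ref{MainTheorem}'', so your awareness of them is well placed; for the alternating case the paper invokes \cite[Consequences~5 \emph{and}~6]{Alp87} to secure the $\ell$-equality as well.
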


\begin{proof}
(i) Assume that such a case happens. We use the {\sf CFSG}.
Let $D\in{\mathrm{Syl}}_p(G)$.

First, assume that $G$ is an alternating group $\mathfrak A_n$ for some $n\geq 5$.
Then, 
since $D$ is abelian
(and hence by Burnside's Theorem), \cite[Theorem 5.4]{MO91} and 
\cite[Consequences 5 and 6]{Alp87} imply that 
$5=k(B)=k(\mathbb B)$ and $3=\ell(B)=\ell(\mathbb B)$, 
where $\mathbb B:=B_0(F\,N_G(D))$. 
Thus we have a contradiction by Lemma \ref{k=5ell=3}.

If $G$ is one of the 26 sporadic simple groups, then the assertion follows by Lemma \ref{sporadic}.

Assume that $G=G(q)$ is a simple group of Lie type defined over  
$\mathbb F_q$ with $p\,|\,q$. Then $G={\mathrm{PSL}}_2(q)$
since $D$ is abelian
(see e.g. \cite[Theorem]{SZ16}). Thus, by \cite[p.588]{BN41}, \cite[Satz 9.1]{Bur76} and
\cite[8.5. Theorem p.70]{Hum06}, we have that
$\ell(B)=(q-1)/2$, and hence $q=7$ since $\ell(B)=3$, so that 
$|D|=7$ since $p\,|\,q$, a contradiction since $D$ is non-cyclic.

Finally suppose that $G=G(q)$ is a simple group of Lie type defined
over a certain finite field $\mathbb F_q$ for some $q$ with $p\,{\not|}\,q$.
Since $p\geq 5$, $D$ is non-cyclic elementary abelian and $k(B)=5$, 
all the sixteen finite simple groups in \cite[Table 24.3]{MT11}  
(cf. \cite[Table 6.1.3]{GLS98}) cannot be our $G$
by Lemma \ref{the16simples}.

As we have seen in the proof of Lemma \ref{NonDefChar},
we can assume that $G\,{\not\cong}\, ^2\!{F_4(2)}'$
since if $G={^2}\!{F_4(2)}'$, then $k(B)\geq 8$.
Then, just as in the proof of Theorem \ref{MainTheorem} we can assume that 
there are a simple, and simply connected reductive 
algebraic group {\bf G} 
defined over the algebraically closed field $\overline{\mathbb F}_q$ 
and a Steinberg (Frobenius) endomorphism
$\mathcal F: \text{\bf G} \rightarrow \text{\bf G}$ with
$G = {\text{\bf G}}^{\mathcal F}/Z({\text{\bf G}}^{\mathcal F})$
(see \cite[2.3]{Mal17} or \cite[p.185]{Cab18}). 
Then we get by
Lemma \ref{BroueMichel} that 
$k(B)=k(\mathbb B)$ and $\ell(B)=\ell(\mathbb B)$ 
where $\mathbb B:=B_0(F\,N_G(D))$.
Hence $k(\mathbb B)=5$ and $\ell(\mathbb B)=3$, a
contradiction by Lemma \ref{k=5ell=3}.

Therefore just as in the proof of Theorem \ref{MainTheorem},
by making use of the {\sf CFSG} in \cite[p.6]{GLS94}, the proof of (i) is completed.

(ii) It follows from \cite[2.1.Theorem]{KS95} 
that there are non-abelian simple groups $G_1, \cdots, G_n$ such that 
$p\,\Big|\,|G_i|$ for each $i$ and that $G = G_1 \times \cdots \times G_n$ 
for some $n\in\mathbb N$.
Set $B_i:= B_0(FG_i)$ for each $i$.
Then $B \cong B_1\otimes_F \cdots\otimes_F B_n$ as $F$-algebras, 
so that $\ell(B) = \ell(B_1) \times \cdots \times\ell(B_n)$. Since each $G_i$ is not $p$-nilpotent, $\ell(B_i)\geq 2$ for all $i$, so that $n=1$ since $\ell(B)=3$.
Hence the assertion follows by (i).
\end{proof}

\section{What we can say if $k(B)=4$ and $\ell(B)=3$}

The purpose of this section is to prove the following theorem:

\begin{Theorem}\label{k=4ell=3MainTheorem}
Let $B:=B_0(FG)$ and $D\in{\mathrm{Syl}}_p(G)$. 
If $k(B)=4$ and $\ell(B)=3$, then $D\cong C_2\times C_2$.
\end{Theorem}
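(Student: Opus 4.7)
The proof will mirror the re-proof of Theorem \ref{MainTheorem} in $\S 3$, together with the lemmas of $\S 4$. The starting point is Brauer's formula (an immediate consequence of his third main theorem applied to the principal block)
\[
 k(B)-\ell(B)\;=\;\sum_{[u],\,u\ne 1}\ell\bigl(B_0(FC_G(u))\bigr),
\]
where the sum runs over $G$-conjugacy classes of non-identity $p$-elements of $D$. The hypothesis $k(B)-\ell(B)=1$ forces exactly one such class, with representative $u$, and $\ell(B_0(FC_G(u)))=1$; equivalently, $D$ contains a single non-identity $G$-class of $p$-elements and $C_G(u)$ is $p$-nilpotent. A standard argument paralleling Lemma \ref{ActionOfE} (via \cite[Theorem 3.6]{KNST14}) then forces $D$ to be elementary abelian.

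I would first dispose of the small cases. If $D$ is cyclic, Dade's theorem \cite{Dad66} gives $k(B)=\ell(B)+(|D|-1)/\ell(B)$, whence $|D|=4$, contradicting cyclicity. If $D\vartriangleleft G$, a direct analogue of Lemma \ref{k=5ell=3} via Schur--Zassenhaus, Fong--Gasch\"utz, and Landau's list of groups with $k=3$ restricts $G/D\in\{C_3,\mathfrak S_3\}$, and an orbit count on $\mathrm{Irr}(D)$ pins $D\cong C_2\times C_2$, as required. If $p=2$, Lemma \ref{HZCp=2} in any case forces $D$ abelian, so it remains only to rule out $p$ odd, and $p=2$ with $|D|\geq 8$.

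Taking a minimal counterexample I would carry out the reductions analogous to Lemmas \ref{Op'Op}--\ref{reduction}: $O_{p'}(G)=1$ since $B$ is principal; $O_p(G)=1$ by quotienting and invoking Brauer's, Brandt's and Belonogov's theorems for the smaller principal block $\bar B$; $O^{p'}(G)=G$ by the Alperin--Dade argument combined with Lemma \ref{1_B=1_b} and a character-theoretic analysis of $\mathrm{Irr}(B\,|\,1_N{\uparrow}^G)$ (where $N$ is the maximal normal subgroup with cyclic simple quotient); and finally \cite[2.1.Theorem]{KS95}, applicable because $D$ is abelian, combined with primality of $\ell(B)=3$ and $\ell(B_0(FG_i))\geq 2$ for every simple factor $G_i$, forces $G$ itself to be non-abelian simple.

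With $G$ non-abelian simple I would invoke the {\sf CFSG} exactly as in Theorem \ref{MainTheorem} and Lemma \ref{k=5ell=3SimpleGp}(i): the alternating case via Michler--Olsson \cite{MO91} and \cite[Consequence 6]{Alp87}, reducing to the already handled normal-defect situation; the sporadic case via Lemma \ref{sporadic} for $p\geq 5$ and the {\sf ATLAS} for $p\leq 3$; the defining-characteristic Lie type case via \cite{SZ16}, which forces $G=\PSL_2(q)$ and gives an explicit incompatible value of $\ell(B)$; and the non-defining-characteristic Lie type case via Lemma \ref{BroueMichel} for $p\geq 5$, which transports $(k(B),\ell(B))$ to the Brauer correspondent with normal defect group and thence to the already handled case. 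The principal obstacle is the non-defining-characteristic Lie type case with $p\in\{2,3\}$ where Lemma \ref{BroueMichel} is unavailable; there one must combine the single $G$-orbit on $D^{\#}$ with the classification of finite simple groups having elementary abelian Sylow $p$-subgroups --- essentially Walter's theorem for $p=2$, leaving only $\PSL_2(2^n)$, $J_1$ and the Ree groups ${}^2G_2(3^{2n+1})$ --- and verify case by case that $(k(B_0),\ell(B_0))\ne(4,3)$ whenever $|D|\geq 8$, so that the only surviving possibility is $p=2$ and $D\cong C_2\times C_2$.
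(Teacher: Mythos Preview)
Your proposal follows the template of \S3 too closely, and this is where it breaks. The reduction to $O^{p'}(G)=G$ does not go through as in Lemma~\ref{O^p'(G)}. Concretely: if $N\vartriangleleft G$ with $G/N$ a simple $p'$-group, then $k(G/N)\leq 4$ forces $G/N\cong C_r$ with $r\in\{2,3\}$. When $r=3$ (and $p\ne 3$) you do get three linear Brauer characters and \cite{Kos81} applies; but when $r=2$ you obtain only \emph{two} linear characters $\chi_0,\chi_1$, hence only $\phi_0,\phi_1$ are forced to be linear, while $\ell(B)=3$. The third Brauer character $\phi_2$ need not be linear, so the passage to $B_0(FN_G(D))$ via \cite{Kos81} fails. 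Since $p$ odd is precisely the case of interest (you want to rule it out), and then $r=2$ is the generic situation, this is a genuine gap, not a detail.

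The paper does \emph{not} reduce to $O^{p'}(G)=G$. Instead it first eliminates $p=3$ directly (Lemma~\ref{nonCyclicElemAbe}: $D$ abelian forces $k(B)=k_0(B)$ by \cite{KM13}, but $k_0(B)\equiv 0\pmod 3$ by \cite{Lan81}), then reduces to $O^p(G)=G$ via the splitting theorem \cite{KK96}, and only then, in the main proof, sets $N:=O^{p'}(G)\cdot C_G(D)$ and analyses $k(G/N)\in\{2,3,4\}$. The case $k(G/N)=4$ uses \cite{GRSS19} on groups with two $p'$-character degrees; $k(G/N)=3$ uses the triviality of the Schur multiplier of $\mathfrak S_3$; and the hard case $k(G/N)=2$ requires a detailed Clifford-theoretic analysis that eventually reduces to the auxiliary $(k,\ell)=(5,3)$ result of \S4 (Lemma~\ref{k=5ell=3SimpleGp}), which you have not invoked. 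Finally, for the simple-group step the paper avoids Walter's theorem entirely: once $p\ne 3$ is known, the case $p\geq 5$ is excluded exactly as in Lemma~\ref{k=5ell=3SimpleGp}, and for $p=2$ the Fong--Harris isotypy \cite{FH93} transports $(k,\ell)$ to $N_G(D)$, where Lemma~\ref{k=4ell=3Normal_D} finishes. Your outline is missing the $O^p(G)=G$ step, the \S4 input, and the tools \cite{KK96}, \cite{GRSS19}, \cite{FH93}.
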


\noindent
In the following we list many lemmas whose aim is just to give a complete proof of
Theorem \ref{k=4ell=3MainTheorem}. Because of this we shall use the following notation and 
assume the following entirely throughout this section from now on. Namely,

\begin{Notation}\label{H}
Throughout this section we assume that $G$ is a finite group with a Sylow
$p$-subgroup $D$ with $|D|=:p^d$ for some $d\geq 1$ and 
$B:=B_0(FG)$.
\end{Notation}

\begin{Lemma}\label{nonCyclicElemAbe}
If $k(B)=4$ and $\ell(B)=3$, then we have that 
$D$ is non-cyclic and elementary abelian and that $p\,{\not=}\,3$.
\end{Lemma}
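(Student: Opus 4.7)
The plan is to establish the three conclusions separately by combining the numerical data $k(B)=4$, $\ell(B)=3$ with results already at our disposal. First, I would exclude the case of cyclic $D$. Brauer's theory of blocks with cyclic defect groups gives $\ell(B)=e$ with $e\mid p-1$, and $k(B)=e+(|D|-1)/e$. The hypothesis $\ell(B)=3$ forces $e=3$, hence $3\mid p-1$ and $p\geq 7$; then $k(B)=4$ forces $(|D|-1)/3=1$, i.e.\ $|D|=4$, which is incompatible with $p\geq 7$.

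Next, to rule out $p=3$, I would argue exactly as in Lemma \ref{p=3}: Landrock's congruence \cite[Corollary~1.6]{Lan81} gives $k_0(B)\equiv 0\pmod{3}$, and the chain $1\leq k_0(B)\leq k(B)=4$ forces $k_0(B)=3$; then \cite[Theorem~C]{NST18} yields $|D|=3$, putting $D$ back into the cyclic case already excluded. To obtain that $D$ is elementary abelian, I would apply Brauer's section formula $k(B)-\ell(B)=\sum_{u}\ell(b_{u})$, where $u$ ranges over representatives of the $G$-conjugacy classes of non-trivial $p$-elements of $D$ and $b_{u}$ denotes the Brauer correspondent of $B$ in $C_{G}(u)$. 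Our hypothesis gives $\sum_{u}\ell(b_{u})=1$, and since every $\ell(b_{u})\geq 1$ there is exactly one such class; hence every non-trivial element of $D$ is $G$-conjugate to a fixed element of order $p$, so $D$ has exponent $p$. For $p=2$, Lemma \ref{HZCp=2} gives that $D$ is abelian, and an abelian group of exponent $2$ is elementary abelian. For $p\geq 5$ I would invoke \cite[Theorem~3.6]{KNST14}, the same classification-based tool used in Lemma \ref{ActionOfE} in the analogous numerical regime $k(B)-\ell(B)=1$.

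The main obstacle is the $p\geq 5$ half of the elementary abelian conclusion: exponent $p$ does not imply abelianness for odd $p$ (witness the extraspecial $p$-groups of exponent $p$), so the argument genuinely relies on the classification-dependent result \cite[Theorem~3.6]{KNST14}. Should a direct appeal fall short in the $(k(B),\ell(B))=(4,3)$ regime, a more laborious fallback would replicate the reduction of Section 3: pass to a simple composition factor of $G$ after controlling $O_{p'}(G)$ and $O_{p}(G)$, then use Lemma \ref{BroueMichel} for Lie-type groups in non-defining characteristic, the classification of groups with abelian Sylow $p$-subgroup in defining characteristic for $\mathrm{PSL}_2(q)$, and Lemmas \ref{sporadic} and \ref{the16simples} together with direct inspection of the alternating groups for the remaining simple families.
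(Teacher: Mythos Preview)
Your proposal is correct and follows the same skeleton as the paper: exclude cyclic $D$ via Dade's theory, obtain elementary abelian from \cite[Theorem~3.6]{KNST14}, and rule out $p=3$ via Landrock's congruence. The differences are cosmetic. The paper applies \cite[Theorem~3.6]{KNST14} in one stroke for all primes, without first extracting the exponent-$p$ observation or splitting off $p=2$; your detour through Lemma~\ref{HZCp=2} for $p=2$ is harmless but unnecessary. For $p=3$ the paper reverses your order: it first knows $D$ is abelian, invokes \cite{KM13} to get $k_0(B)=k(B)=4$, and then Landrock's congruence $k_0(B)\equiv 0\pmod 3$ gives an immediate contradiction, so \cite{NST18} is not needed here. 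Your route through \cite[Theorem~C]{NST18} (as in Lemma~\ref{p=3}) is equally valid and has the mild advantage of not requiring abelianness of $D$ beforehand. Your hedged ``fallback'' paragraph is not needed: \cite[Theorem~3.6]{KNST14} applies directly in the regime $k(B)-\ell(B)=1$ for the principal block.
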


\begin{proof} 
Assume first that $D$ is cyclic. By \cite{Dad66}, the inertial index $e$ of $B$ is 3
since $e=\ell(B)$. Then, $|D|=me+1$, where $m$ is the multiplicity of the 
exceptional characters of $B$, and also $k(B)=e+m$. Hence $4=3+m$, so that
$m=1$. Thus, $|D|=3+1=4$, and hence $p=2$. This is a contradiction since
$e|(p-1)$.

Since $k(B)-\ell(B)=1$ and $B$ is principal,
$D$ is elementary abelian by \cite[Theorem 3.6]{KNST14}
(note that this result needs the {\sf CFSG} since it depends on 
\cite[Theorem A]{KNST14}).

Finally suppose $p=3$.
Since $D$ is abelian, by \cite{KM13}, $k(B)=k_0(B)$.
By \cite[Corollary~1.6]{Lan81}, $k_0(B)\equiv 0$ (mod $3$)
since $D$ is non-cyclic.
Since $k(B)=4$, this is a contradiction.
\end{proof}

\begin{Lemma}\label{k=4ell=3Normal_D}
If $k(B)=4$, $\ell(B)=3$ and $D\vartriangleleft G$, then
$D\cong C_2\times C_2$, in fact $B\cong F\,\mathfrak A_4$
as $F$-algebras, and also as interior $D$-algebras.
\end{Lemma}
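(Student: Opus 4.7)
The plan is to reduce to a $p$-solvable semidirect product and then enumerate by counting conjugacy classes. Since $B$ is the principal block, one may quotient by $O_{p'}(G)$ and assume $O_{p'}(G)=1$. Combined with $D\vartriangleleft G$, Schur--Zassenhaus gives $G=D\rtimes E$ for a $p'$-complement $E$, and $O_{p'}(G)=C_E(D)$ forces $E$ to act faithfully on $D$. By \cite{FG61}, $B$ is then the only block of $FG$, so $k(G)=4$ and $\ell(G)=3$. Since $D=O_p(G)$ acts trivially on every simple $FG$-module, $\ell(G)=\ell(E)=k(E)$, forcing $k(E)=3$ and hence $E\cong C_3$ or $E\cong\mathfrak S_3$.

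I would eliminate $E\cong\mathfrak S_3$ by counting $E$-orbits on $D$, which are precisely the $G$-classes of $G$ lying inside $D$ since $D$ is abelian and normal. Here $p\ge 5$, so Maschke splits $D=T^{a}\oplus S^{b}\oplus V^{c}$ into trivial, sign and $2$-dimensional standard $\mathbb F_p\mathfrak S_3$-summands, and faithfulness forces $c\ge 1$. A Burnside count then gives at least $(p+1)(p+2)/6\ge 7$ such orbits, already contradicting $k(G)=4$.

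For $E\cong C_3$ (and $p\neq 3$ by Lemma~\ref{nonCyclicElemAbe}), Maschke gives $D=T\oplus F$ with $T=C_D(E)$, and $E$ acts fixed-point-freely on $F\setminus\{0\}$. Setting $|T|=p^{a}$ and $|F|=p^{f}$ with $f\ge 1$, the standard class-count for $G=D\rtimes E$ yields
\[
k(G)\;=\;p^{a}\Bigl(1+\tfrac{p^{f}-1}{3}\Bigr)+2p^{a}\;=\;\frac{p^{a}(p^{f}+8)}{3},
\]
where the trailing $2p^{a}$ accounts for the $G$-classes outside $D$ (one for each non-identity class of $E$). Hence $k(G)=4$ forces $p^{a}(p^{f}+8)=12$, whose only solution subject to $p$ prime, $p\neq 3$, and $f\ge 1$ is $p=2$, $a=0$, $f=2$. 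Thus $D\cong C_2\times C_2$, and since the faithful action of $C_3$ on $C_2\times C_2$ is unique up to isomorphism, $G\cong\mathfrak A_4$. The resulting group isomorphism sends $D$ onto the unique normal Klein four subgroup of $\mathfrak A_4$, so the induced algebra isomorphism $B=FG\cong F\mathfrak A_4$ respects the interior $D$-algebra structure.

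The delicate step is the $\mathfrak S_3$ orbit count, where one must rule out that some combination of trivial or sign summands could drag $k(G)$ below $5$; the bound $(p+1)(p+2)/6$ realized already by a single standard summand settles all $p\ge 5$, and any further trivial or sign summands only inflate the count.
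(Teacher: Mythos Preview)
Your argument is correct, but it takes a genuinely different route from the paper's. After the common reduction to $G=D\rtimes E$ with $k(G)=4$ and $k(E)=3$, the paper simply looks up the list of finite groups with exactly four conjugacy classes in \cite[Table~1]{LL85}, notes that $3\mid |E|\mid |G|$, and reads off $G\cong\mathfrak A_4$; no structural information about $D$ is needed beyond $D\vartriangleleft G$. By contrast, you avoid the classification in \cite{LL85} entirely and instead compute $k(G)$ directly by orbit counting, handling $E\cong\mathfrak S_3$ and $E\cong C_3$ separately. Your method is more self-contained in that it does not appeal to an external table, but it is longer and---unlike the paper's proof---relies on $D$ being (elementary) abelian: you use this both to identify $G$-classes inside $D$ with $E$-orbits and, in the $\mathfrak S_3$ case, to decompose $D$ as an $\mathbb F_p[\mathfrak S_3]$-module. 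That input is available from Lemma~\ref{nonCyclicElemAbe}, which you already cite for $p\neq 3$; you should also cite it explicitly for the (elementary) abelian conclusion, since your orbit counts depend on it. One further small point: in the $C_3$ case the hypothesis $p\neq 3$ is automatic from the fact that $E\cong C_3$ is a $p'$-group, so the appeal to Lemma~\ref{nonCyclicElemAbe} there is redundant (though harmless).
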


\begin{proof}
Since $B$ is principal, we can assume that $O_{p'}(G)=1$. Then 
by Fong-Gasch{\"u}tz's
Theorem, $FG$ has only one block, say $B$. Then, by Schur-Zassenhaus' Theorem,
$G=D\rtimes E$ for a $p'$-subgroup $E$ of $G$.
Thus, $k(G)=k(B)=4$ and $3=\ell(B)=\ell(G)=\ell(G/D)=\ell(E)=k(E)$.
Hence, \cite[Table 1]{LL85} implies that
$$ E\in\{C_3, \mathfrak S_3\} \text{ and }G\in\{C_4, C_2\times C_2, \mathfrak A_4\}.
$$
Then, since $3{\Big|}|E|{\Big|}|G|$, $G=\mathfrak A_4$, so that the assertion follows.
\end{proof}

\begin{Hypothesis}\label{Hypo_k=4ell=3}
Suppose that $k(B)=4$ and $\ell(B)=3$ till 
just before the proof of Theorem \ref{k=4ell=3MainTheorem}.
Further, because of Lemmas \ref{nonCyclicElemAbe} and \ref{k=4ell=3Normal_D}, we can assume that
$p\,{\not=}\,3$, $D$ is non-cyclic elementary abelian and also that
Theorem \ref{k=4ell=3MainTheorem} holds for any finite group $\mathfrak G$ such that
$|\mathfrak G|<|G|$.
\end{Hypothesis}

\begin{Lemma}\label{k=4ell=3SimpleGp}
It holds the following:
\begin{enumerate}
\item[(i)] If $G$ is non-abelian simple, then
$D\cong C_2\times C_2$, so that
$G\cong {\mathrm{PSL}}_2(q)$ 
for a power $q$ of a prime such that $q\equiv 3\text{ or }5$ {\rm{(mod 8)}}.
\item[(ii)]
If $G=O^{p}(G)=O^{p'}(G)$ and $O_{p'}(G)=O_p(G)=1$, then
$G$ is non-abelian simple, and hence 
$D\cong C_2\times C_2$ and further
$G\cong {\mathrm{PSL}}_2(q)$ 
for a power $q$ of a prime such that $q\equiv 3\text{ or }5$ {\rm{(mod 8)}}.
\end{enumerate}
\end{Lemma}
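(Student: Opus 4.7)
The plan is to mirror the CFSG-based case analysis used in Lemma \ref{k=5ell=3SimpleGp}(i), exploiting Hypothesis \ref{Hypo_k=4ell=3} (so $p\neq 3$ and $D$ is non-cyclic elementary abelian) and splitting into $p\geq 5$ and $p=2$.

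For $p\geq 5$ I expect a contradiction in each of the four CFSG sub-cases, thereby forcing $p=2$. If $G=\mathfrak A_n$ with $n\geq 5$, then abelianness of $D$ together with \cite[Theorem 5.4]{MO91} and \cite[Consequences 5, 6]{Alp87} gives $k(B)=k(\mathbb B)$ and $\ell(B)=\ell(\mathbb B)$ with $\mathbb B:=B_0(FN_G(D))$, so Lemma \ref{k=4ell=3Normal_D} forces $D\cong C_2\times C_2$, contradicting $p\geq 5$. If $G$ is sporadic, Lemma \ref{sporadic} yields $k(B)>9$. If $G=G(q)$ is of Lie type in defining characteristic ($p\mid q$), then abelianness of $D$ forces $G\cong \operatorname{PSL}_2(q)$ via \cite[Theorem]{SZ16}, and $\ell(B)=(q-1)/2=3$ would give $q=7$ making $D$ cyclic of order $7$---a contradiction. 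In non-defining characteristic ($p\nmid q$), I would exclude ${}^2F_4(2)'$ by direct inspection of its character table and the $16$ simples of Lemma \ref{the16simples}, then use Brou\'e--Michel (Lemma \ref{BroueMichel}) applied to a simply connected cover of $G$ to pass to the normalizer, reducing once more by Lemma \ref{k=4ell=3Normal_D} to $D\cong C_2\times C_2$ versus $p\geq 5$.

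For $p=2$ I would invoke the Walter--Bender classification of non-abelian finite simple groups with abelian Sylow $2$-subgroup: $G$ is $\operatorname{PSL}_2(q)$ with $q\equiv 3,5\pmod 8$, $\operatorname{PSL}_2(2^n)$ with $n\geq 2$, the Janko group $J_1$, or a small Ree group ${}^2G_2(3^{2m+1})$ with $m\geq 1$. The defining-characteristic family $\operatorname{PSL}_2(2^n)$ is ruled out because $FG$ is then a single block with $k(G)=2^n+1\neq 4$. For $J_1$ and ${}^2G_2(3^{2m+1})$, where $D\cong (C_2)^3$, reading the ordinary character table and subtracting the defect-zero characters (those of degree divisible by $8$) shows that the principal $2$-block has $k(B_0)>4$; the data are in the ATLAS for $J_1$ and in Ward's tables together with the Landrock--Michler decomposition analysis for the Ree groups. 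The only surviving family is $\operatorname{PSL}_2(q)$ with $q$ odd and $q\equiv 3,5\pmod 8$, where $D\cong C_2\times C_2$ and a direct character-degree count yields $(k(B_0),\ell(B_0))=(4,3)$, matching the stated conclusion.

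Part (ii) reduces to (i) verbatim as in Lemma \ref{k=5ell=3SimpleGp}(ii): since $D$ is abelian with $O_p(G)=O_{p'}(G)=1$ and $G=O^{p'}(G)$, \cite[2.1.Theorem]{KS95} writes $G=G_1\times\cdots\times G_n$ with each $G_i$ non-abelian simple and $p\mid|G_i|$; then $B\cong B_1\otimes_F\cdots\otimes_F B_n$ with $B_i:=B_0(FG_i)$, each $G_i$ is not $p$-nilpotent so $\ell(B_i)\geq 2$, and $3=\prod \ell(B_i)$ forces $n=1$, whereupon (i) applies. The main obstacle is clearly the $p=2$ case: unlike for $p\geq 5$ there is no clean isotypy reduction to the normalizer available, so the Janko and Ree cases must be excluded by direct character-theoretic inspection rather than by a uniform theoretical reduction.
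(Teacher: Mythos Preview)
Your treatment of $p\geq 5$ and of part (ii) matches the paper's proof. The difference is entirely at $p=2$. The paper does \emph{not} run through the Walter--Bender list and exclude $\operatorname{PSL}_2(2^n)$, $J_1$, ${}^2G_2(3^{2m+1})$ one by one; instead it invokes Fong--Harris \cite{FH93}, which supplies a perfect isometry between the principal $2$-block of any finite group with abelian Sylow $2$-subgroup and that of $N_G(D)$. This gives $k(\mathbb B)=4$, $\ell(\mathbb B)=3$ for $\mathbb B:=B_0(FN_G(D))$, and Lemma~\ref{k=4ell=3Normal_D} then forces $D\cong C_2\times C_2$; the identification $G\cong\operatorname{PSL}_2(q)$ with $q\equiv 3,5\pmod 8$ follows from the classification of simple groups with Klein four Sylow $2$-subgroup. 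Thus the paper keeps the same uniform ``reduce to the normaliser'' strategy at $p=2$ that Brou\'e--Michel provides for $p\geq 5$, avoiding any case-by-case character-table inspection.

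Your route is workable but contains one misstatement that must be fixed: for $G=\operatorname{PSL}_2(2^n)$ with $p=2$ the algebra $FG$ is \emph{not} a single block, since the Steinberg character of degree $2^n$ has defect zero and forms its own block. The principal block therefore has $k(B_0)=2^n$, not $2^n+1$. In particular $k(B_0)=4$ occurs exactly when $n=2$, and $\operatorname{PSL}_2(4)\cong\operatorname{PSL}_2(5)$ already belongs to the family $q\equiv 5\pmod 8$; this case should be retained, not excluded. For $n\geq 3$ one has $k(B_0)=2^n\geq 8$, which does give the exclusion. With this correction your case analysis goes through, though the paper's approach via \cite{FH93} is shorter and more uniform.
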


\begin{proof}
(i) As in Lemma \ref{k=5ell=3SimpleGp}, we have a contradiction if $p\geq 5$. So
by Hypothesis \ref{Hypo_k=4ell=3}, $p=2$. Then since $D$ is abelian 
by Hypothesis \ref{Hypo_k=4ell=3},
\cite{FH93} and Lemma \ref{k=4ell=3Normal_D} imply the assertion.

(ii)  By using (i) we can prove (ii) just as in the proof of  Lemma \ref{k=5ell=3SimpleGp}(ii).
\end{proof}

\begin{Lemma}\label{Op'(G)Op(G)}
We can assume that $O_{p'}(G)=O_p(G)=1$.
\end{Lemma}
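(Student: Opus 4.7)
My plan is to mirror the proof of Lemma~\ref{Op'Op} almost step for step, with Belonogov's Theorem~\ref{MainTheorem} playing the role that Brauer's $k(B)=2$ theorem played there. For the reduction $O_{p'}(G)=1$: since $B$ is the principal block, the quotient map $G\to G/O_{p'}(G)$ induces an $F$-algebra isomorphism on principal blocks by \cite[Theorem 5.8.1]{NT89}, preserving $k(B)$, $\ell(B)$, and the Sylow $p$-subgroup up to isomorphism; by the induction hypothesis built into Hypothesis~\ref{Hypo_k=4ell=3}, we may therefore assume $O_{p'}(G)=1$.

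Now assume $Q:=O_p(G)\ne 1$, and set $\bar G:=G/Q$, $\bar D:=D/Q$, $\bar B:=B_0(F\bar G)$. Since both blocks contain the trivial character, $B$ dominates $\bar B$, and by \cite[Theorem 5.8.10 and Lemma 5.8.6(ii)]{NT89} we have $\mathrm{Irr}(\bar B)\subseteq\mathrm{Irr}(B)$ via inflation; hence $k(\bar B)\in\{1,2,3,4\}$. I would then split into four cases. If $k(\bar B)=4$, then $\mathrm{Irr}(\bar B)=\mathrm{Irr}(B)$ forces $Q\le\bigcap_{\chi\in\mathrm{Irr}(B)}\ker\chi=O_{p'}(G)=1$ by \cite[Theorem 5.8.1]{NT89}, contradicting $Q\ne 1$. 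If $k(\bar B)=3$, Belonogov's Theorem~\ref{MainTheorem} gives $|\bar D|=3$, hence $p=3$, contradicting Hypothesis~\ref{Hypo_k=4ell=3}. If $k(\bar B)=2$, then $\ell(\bar B)=1$, so $\bar G$ is $p$-nilpotent by \cite[Theorem 5.8.3]{NT89}, whence $G$ is $p$-solvable; since $D$ is abelian by Hypothesis~\ref{Hypo_k=4ell=3}, \cite[Theorem 6.3.3]{Gor68} yields that $G$ has $p$-length one, and combined with $O_{p'}(G)=1$ this forces $D\vartriangleleft G$, so Lemma~\ref{k=4ell=3Normal_D} delivers $D\cong C_2\times C_2$, proving the theorem for $G$ outright. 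Finally, if $k(\bar B)=1$, then $\bar D=1$ gives $D=Q\vartriangleleft G$, and Lemma~\ref{k=4ell=3Normal_D} again concludes.

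In every branch, either Theorem~\ref{k=4ell=3MainTheorem} is proven directly for $G$ or $Q=1$, completing the reduction. The one delicate ingredient is the $k(\bar B)=2$ case, which relies on the Hall--Higman-type $p$-length-one criterion for $p$-solvable groups with abelian Sylow $p$-subgroup together with Lemma~\ref{k=4ell=3Normal_D}; the remaining cases are essentially transcriptions of the argument used for Lemma~\ref{Op'Op}.
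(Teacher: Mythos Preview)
Your proposal is correct and follows essentially the same approach as the paper. The only minor difference is in the $k(\bar B)=2$ case: the paper invokes Brandt's theorem \cite[Theorem A]{Bra82} to conclude $|\bar D|=2$ (hence $p=2$) directly, whereas you deduce $\ell(\bar B)=1$ and proceed via $p$-nilpotence of $\bar G$; both routes lead to the same $p$-length-one argument and Lemma~\ref{k=4ell=3Normal_D}.
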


\begin{proof}
Since $B$ is principal, we can assume $O_{p'}(G)=1$.
Next, assume that $O_p(G)\,{\not=}\,1$. Set $Q:=O_p(G)$, $\bar D:=D/Q$,
$\bar G:=G/Q$, and $\bar B:=B_0(F\bar G)$. Since $B$ dominates $\bar B$,
${\mathrm{Irr}}(\bar B) \subseteq {\mathrm{Irr}}(B)$ 
(see \cite[Theorem 5.8.10 and Lemma 5.8.6(ii)]{NT89}).
So $k(\bar B)=1, 2, 3$ or $4$.

If $k(\bar B)=1$, then $p\,{\not\Big|}\,|\bar G|$, so that $Q\in{\mathrm{Syl}}_p(G)$. 
So the assertion follows by Lemma \ref{k=4ell=3Normal_D}.

If $k(\bar B)=4$, then 
$Q\leq\bigcap_{\chi\in{\mathrm{Irr}}(B)}\,\ker\chi=O_{p'}(G)$, a contradiction
since $Q\,{\not=}\,1$.

Assume $k(\bar B)=2$. Then \cite[Theorem A]{Bra82} implies that $|\bar D|=2$,
and hence $p=2$ and $\bar G$ is $2$-nilpotent by \cite[Theorem 7.6.1]{Gor68}.
Hence 
$G$ is $2$-solvable.
Then, $G$ is of $2$-length one since $D$ is abelian by Lemma \ref{nonCyclicElemAbe}.
Further, since we assume that $O_{2'}(G)=1$, $D\vartriangleleft G$.
Thus the assertion follows from Lemma \ref{k=4ell=3Normal_D}.

Assume finally that $k(\bar B)=3$.
Then, Theorem \ref{MainTheorem} yields that $|\bar D|=3$,
so that $p=3$, a contradiction by Lemma \ref{nonCyclicElemAbe}.
\end{proof}

\begin{Lemma}\label{k=4O^p(G)}
We can assume that $O^p(G)=G$.
\end{Lemma}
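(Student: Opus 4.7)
The plan is to suppose $O^p(G)\neq G$ and derive a contradiction. Since $G/O^p(G)$ is a nontrivial $p$-group, choose $N\vartriangleleft G$ with $O^p(G)\leq N$ and $|G/N|=p$, and set $b:=B_0(FN)$. As $b$ is $G$-invariant and $|G/N|$ is a $p$-power, Lemma \ref{Gallagher|G/N|=p}(ii) yields ${\mathrm{Irr}}(G/N)={\mathrm{Irr}}(B\,|\,1_N{\uparrow}^G)\subseteq{\mathrm{Irr}}(B)$, so $p=|{\mathrm{Irr}}(G/N)|\leq k(B)=4$. Since $p\neq 3$ by Hypothesis \ref{Hypo_k=4ell=3}, $p=2$; hence $|G/N|=2$, ${\mathrm{Irr}}(G/N)=\{1_G,\lambda\}$ for a linear character $\lambda$ of order two, and we may write ${\mathrm{Irr}}(B)=\{1_G,\lambda,\chi_2,\chi_3\}$.

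Tensoring with $\lambda$ preserves $B$ (because $\lambda^0=1_G^0$ gives $(\chi\otimes\lambda)^0=\chi^0$) and permutes ${\mathrm{Irr}}(B)$, interchanging $1_G$ and $\lambda$ and either swapping $\chi_2,\chi_3$ (Case~A) or fixing each of them (Case~B).

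In Case~A, $\chi_3=\chi_2\otimes\lambda$ satisfies $\chi_3|_N=\chi_2|_N=:\theta$, so every character of $B$ restricts to a sum of copies of $1_N$ and $\theta$; hence ${\mathrm{Irr}}(b)=\{1_N,\theta\}$ and $k(b)=2$. By \cite[Theorem A]{Bra82} this forces $\ell(b)=1$, so ${\mathrm{IBr}}(b)=\{1_N^0\}$. By Clifford theory every $\varphi\in{\mathrm{IBr}}(B)$ has $\varphi|_{N_{p'}}$ equal to a positive integer multiple of $1_N^0$, which forces $\varphi=1_G^0$; thus $\ell(B)=1$, contradicting $\ell(B)=3$.

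In Case~B, $\chi_2\otimes\lambda=\chi_2$ gives $\chi_2(g)=0$ for every $g\notin N=\ker\lambda$. Picking an irreducible constituent $\theta$ of $\chi_2|_N$, $\theta$ cannot be $G$-invariant, for otherwise its two Gallagher extensions to $G$ would be interchanged by $\otimes\lambda$ and neither of them could satisfy $\chi_2\otimes\lambda=\chi_2$. Hence $\chi_2=\theta{\uparrow}^G$ by Clifford theory, and $\chi_2(1)=2\theta(1)$ is even. However, $D$ is elementary abelian by Hypothesis \ref{Hypo_k=4ell=3}, so the Kessar--Malle direction of Brauer's Height Zero Conjecture \cite{KM13} (cf.\ Lemma \ref{HZCp=2}) gives $k_0(B)=k(B)$; every $\chi\in{\mathrm{Irr}}(B)$ therefore has height zero, and since $B$ is the principal block this forces $\chi(1)$ to be odd, contradicting the parity of $\chi_2(1)$. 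Both cases being impossible, we conclude $O^p(G)=G$. The chief obstacle is ruling out Case~B: the tensoring-by-$\lambda$ analysis produces a clean dichotomy, but closing Case~B requires the nontrivial odd-degree input coming from the Kessar--Malle theorem.
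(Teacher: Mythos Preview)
Your argument is correct, and it actually proves a slightly stronger statement than the paper does: you show that $O^p(G)\neq G$ is outright impossible under Hypothesis~\ref{Hypo_k=4ell=3}, whereas the paper only shows that if $O^p(G)\neq G$ then the conclusion $D\cong C_2\times C_2$ of Theorem~\ref{k=4ell=3MainTheorem} already holds. The paper's route is quite different and shorter: since $D$ is elementary abelian one can split $G=N\rtimes R$ with $D=Q\times R$ and $R\cong C_p$, and then the Koshitani--K\"ulshammer splitting theorem \cite{KK96} gives $B\cong b\otimes_F FR$, whence $k(B)=k(b)\cdot p$, forcing $p=2$, $k(b)=2$, and (via \cite{Bra82}) $Q\cong C_2$, so $D\cong C_2\times C_2$. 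Your approach trades the elementary algebra identity from \cite{KK96} for a Clifford-theoretic case analysis together with the Kessar--Malle theorem \cite{KM13}; this is heavier machinery (it needs the {\sf CFSG}, though the section already invokes it elsewhere), but it yields the bonus that $O^p(G)=G$ is genuinely forced rather than merely something one ``can assume''. One small remark: in Case~A your deduction $\ell(b)=1$ is immediate from $k(b)=2$ alone (since $\ell(b)<k(b)$ whenever the defect is positive), and then Clifford's theorem for modules gives $S{\downarrow}_N\cong F_N^{\,m}$ for every simple $B$-module $S$, so $S$ factors through $F[G/N]\cong F[C_2]$, which in characteristic~$2$ has only the trivial simple---this is exactly what you wrote, and it is sound.
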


\begin{proof}
Suppose $O^p(G)\,{\lneqq}\,G$. Then, there is an $N$ with 
$N\vartriangleleft G$ and $|G/N|=p$. 
Since $D$ is elementary abelian and non-cyclic by Lemma \ref{nonCyclicElemAbe}, 
we can write $G=N\rtimes R$ and $D=Q\times R$
where $Q\in{\mathrm{Syl}}_p(N)$ and $R\leq G$
with $R\cong C_p$. Set $b:=B_0(FN)$. Then it follows from \cite[Theorem]{KK96}
that $B\cong b\otimes_F FR$ as $F$-algebras. 
Then, by \cite[Lemma 2.4.2(iii)]{NT89},
$Z(b)\otimes_F FR = Z(b)\otimes_F Z(FR) = Z(b\otimes_F FR)\cong Z(B)$ as $F$-algebras.
Since $\dim_F Z(B)=k(B)=4$ by Hypothesis \ref{Hypo_k=4ell=3}, it holds that
$k(b)\cdot p = 4$.
Hence $k(b)=2$ and $p=2$.
Then, \cite[Theorem A]{Bra82} implies that $Q\cong C_2$.
So that $D\cong C_2\times C_2$. Namely, the conclusion of 
Theorem \ref{k=4ell=3MainTheorem} holds
(recall that we are discussing by induction, see Hypothesis \ref{Hypo_k=4ell=3}).
\end{proof}

\begin{Lemma}\label{AlperinDade}
If furthermore $G=O^{p'}(G){\cdot}C_G(D)$, then 
$D\cong C_2\times C_2$.
\end{Lemma}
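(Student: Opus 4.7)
The plan is to set $N := O^{p'}(G)$ and transport the principal block from $G$ down to $N$ via the Alperin--Dade block isomorphism theorem (\cite{Alp76}, \cite[Theorem]{Dad77}), exactly as was used in the proof of Lemma \ref{O^p'(G)}. Since $G/N$ is by definition a $p'$-group, we have $D \leq N$ and $D \in \mathrm{Syl}_p(N)$; combining this with the hypothesis $G = N \cdot C_G(D)$ places us exactly in the situation of the Alperin--Dade theorem. I would then conclude $B \cong b$ as $F$-algebras, where $b := B_0(FN)$; in particular
$$k(b) = k(B) = 4, \qquad \ell(b) = \ell(B) = 3,$$
both being invariants of the $F$-algebra structure (the dimension of the center and the number of simple modules, respectively).

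Next I would split into cases according to whether $N = G$. If $N \lneq G$, then $|N| < |G|$, and the inductive clause of Hypothesis \ref{Hypo_k=4ell=3} applies to the pair $(N, b)$, yielding a Sylow $p$-subgroup of $N$ isomorphic to $C_2 \times C_2$; since $D \in \mathrm{Syl}_p(N)$, we obtain $D \cong C_2 \times C_2$ and we are finished. If on the other hand $N = G$, then $G = O^{p'}(G)$; combined with $G = O^p(G)$ from Lemma \ref{k=4O^p(G)} and $O_{p'}(G) = O_p(G) = 1$ from Lemma \ref{Op'(G)Op(G)}, all hypotheses of Lemma \ref{k=4ell=3SimpleGp}(ii) are in force, and that lemma delivers $D \cong C_2 \times C_2$ (and in fact pins $G$ down as some $\PSL_2(q)$).

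The substantive work has already been done upstream --- namely the reduction to the simple case via the {\sf CFSG} packaged in Lemma \ref{k=4ell=3SimpleGp}, and the algebra-isomorphism theorem of Alperin--Dade. So the only step here that truly requires attention is recognizing that the hypothesis $G = O^{p'}(G) \cdot C_G(D)$ is precisely the trigger for the Alperin--Dade setup already applied in Lemma \ref{O^p'(G)}; once this is noted, the present lemma is essentially a bookkeeping step closing off the $G = N \cdot C_G(D)$ case of the induction, with no new obstacle beyond invoking the earlier structural lemmas.
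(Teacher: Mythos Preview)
Your proof is correct and follows the same core idea as the paper: apply the Alperin--Dade isomorphism \cite{Alp76, Dad77} to transfer $k(B)=4$, $\ell(B)=3$ down to $b:=B_0(FN)$ with $N=O^{p'}(G)$, and then conclude. The only difference is in the closing step. You split into the cases $N\lneqq G$ (where you invoke the inductive hypothesis from Hypothesis~\ref{Hypo_k=4ell=3}) and $N=G$ (where you apply Lemma~\ref{k=4ell=3SimpleGp}(ii) to $G$ itself). The paper instead applies Lemma~\ref{k=4ell=3SimpleGp}(ii) directly to $L:=N$ in all cases, which requires checking the extra hypothesis $O^p(L)=L$; this is done via a cohomological argument (the $F$-algebra isomorphism $B\cong b'$ preserves $\mathrm{Ext}^1(F,F)=0$, hence $O^p(L)=L$ by \cite[Chap.~I Corollary~10.13]{Lan83}). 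Your case split sidesteps this verification entirely and is marginally cleaner for that reason, at the cost of an extra line of case analysis.
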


\begin{proof}
By Lemmas \ref{Op'(G)Op(G)} and \ref{k=4O^p(G)}      we can assume that
$O_{p'}(G)=O_p(G)=1$ and $O^p(G)=G$.
Set $L:=O^{p'}(G)$ and $b':=B_0(FL)$.
Then, it follows from \cite{Alp76, Dad77} that $B\cong b'$ as $F$-algebras, so that
$k(b')=4$ and $\ell(b')=3$ by \cite[Lemma 5.11.3]{NT89}.
Obviously $O_{p'}(L)=O_p(L)=1$.
Now, since $O^p(G)=G$, \cite[Chap.I Corollary 10.13]{Lan83} implies that ${\mathrm{Ext}}_B^1(F,F)=0$,
so that ${\mathrm{Ext}}_{b'}^1(F,F)=0$ since the isomorphism $B\cong b'$ sends $F_G$ to $F_L$,
which yields that $O^p(L)=L$ again by \cite[Chap.I Corollary 10.13]{Lan83}.
By the definition of $L$, $O^{p'}(L)=L$.
Hence Lemma \ref{k=4ell=3SimpleGp}(ii) yields that $D\cong C_2\times C_2$.
\end{proof}

\noindent
Now, we are ready to prove our main theorem of this section,
that is in fact the first half of the main result of this paper (see Theorem \ref{k=4MainTheorem}).
Namely,

\begin{proof}[{\bf Proof of Theorem \ref{k=4ell=3MainTheorem}}]
By Lemmas \ref{Op'(G)Op(G)} and \ref{k=4O^p(G)}, we can assume that $O_{p'}(G)=O_p(G)=1$ and $O^p(G)=G$.
Set $N:=O^{p'}(G){\cdot}C_G(D)$ and $b:=B_0(FN)$.
Thus, $N\vartriangleleft G$ by the Frattini argument,  and obviously $C_G(D)\leq N$.
If $G=N$, then Lemma \ref{AlperinDade} yields that $D\cong C_2\times C_2$.
Hence we can assume that $N\lneqq G$. Set $\bar G:=G/N$ and $\bar B:=B_0(F\bar G)$.
Now, by Lemma \ref{1_B=1_b}(i)--(ii), we know that 
\begin{equation}\label{Dade99} 
1_B=1_b \text{ \ \  and \ \  }{\mathrm{Irr}}(\bar G)={\mathrm{Irr}}(B\,|\, {1_N}{\uparrow}^G) \subseteq {\mathrm{Irr}}(B).
\end{equation}
Since $k(B)=4$ and $|\bar G|>1$, $k(\bar G)=2, 3 \text{ or }4$.

Assume first that $k(\bar G)=4$. Then, by \cite[Table 1 p.309]{LL85}, 
$\bar G\in\{C_4, C_2\times C_2, D_{10}, \mathfrak A_4\}$.
If $\bar G\cong C_4\text{ or }C_2\times C_2$, 
then all irreducible ordinary characters in $B$ are linear, so that
\cite[Theorem (7)$\Leftrightarrow$(10)]{Kos81, Kos90}, 
that $k(B)=k(\mathbb B)$ and $\ell(B)=\ell(\mathbb B)$ where $\mathbb B:=B_0(F\,N_G(D))$,
so that Lemma \ref{k=4ell=3Normal_D} yields that $D\cong C_2\times C_2$.
Suppose next that $\bar G\cong D_{10} \text{ or }\mathfrak A_4$. Then $|\{ \chi(1)\,|\, \chi\in{\mathrm{Irr}}(B)\}|=2$. 
Apparently, $p\,{\not=}\,2$ since $p\,{\not|}\,|\bar G|$. Hence by Lemma \ref{nonCyclicElemAbe}, $p\geq 5$.
Thus, by \cite[Theorem A]{GRSS19}, $G$ is $p$-solvable. 
Hence, by Lemma \ref{nonCyclicElemAbe} and \cite[Theorem 6.3.3]{Gor68},
$G$ is of $p$-length one. 
So $D\vartriangleleft G$ since $O_{p'}(G)=1$.
Thus, Lemma \ref{k=4ell=3Normal_D} yields the assertion. 

Assume next that $k(\bar G)=3$. Then, again by \cite[Table 1 p.309]{LL85}, $\bar G\cong C_3\text{ or }\mathfrak S_3$.
If $\bar G\cong C_3$, then $B$ has three distinct linear ordinary characters $\chi_0$, $\chi_1$ and $\chi_2$,
so that $(\chi_i)^0$ for $i=1,2,3$ are three different linear Brauer characters in $B$, and hence
again by  \cite[Theorem (7)$\Leftrightarrow$(10)]{Kos81, Kos90} and Lemma \ref{k=4ell=3Normal_D},
we have $D\cong C_2\times C_2$. Now, assume that $\bar G\cong\mathfrak S_3$.
Let $\chi$ be the unique ordinary character $\mathfrak S_3$ of degree $2$, and consider $\chi$ as
an ordinary character in ${\mathrm{Irr}}(B)$ (see (\ref{Dade99})).
Then, (\ref{Dade99}) and the Frobenius Reciprocity imply that 
$\langle \chi{\downarrow}_N, 1_N\rangle^N\,{\not=}\,0$. But since 
${\mathrm{H}}^2(\mathfrak S_3, \mathbb C^\times)=1$
(see \cite[2.12.3 Theorem (i)]{Kar87}) and since $1_N$ is $G$-invariant, we get by Clifford's Theorem 
(see \cite[Chap.3, the solution of Problem 11]{NT89}) that
$\chi{\downarrow}_N=1_G$, a contradiction since $\chi(1)=2$.

Assume finally that $k(\bar G)=2$. Hence $\bar G\cong C_2$ by \cite[Table 1]{LL85}.
By (\ref{Dade99}), ${\mathrm{Irr}}(B)$ has at least two linear characters,
say $\chi_0:=1_G$ and $\chi_1$ (sign character).  
Since $\bar G$ is cyclic and $1_N$ is $G$-invariant,
by Clifford's Theorem and (\ref{Dade99}),
$\chi_1{\downarrow}_N=1_N$, so that $\chi_1(u)=1$ for every
$p$-element $u\in G$ since $p {\not|}\, |G/N|$ (and hence $u\in N)$.
Since $\chi_1 \not= \chi_0=1_G$, there exists an element $g\in G$ with $\chi_1(g)\not= 1$.
Write $g=g_p\,g_{p'}=g_{p'}\,g_p$ where $g_p$ and $g_{p'}$
respectively are the $p$-part and the $p'$-part of $g$. 
Thus, since $\chi_1$ is linear (and hence $\chi_1$ is a group homomorphism)
and since $\chi_1(g_p)=1$, we have that
$$
1 \not= \chi_1(g)=\chi_1(g_p\,g_{p'})=\chi_1(g_p)\,\chi_1(g_{p'})=\chi_1(g_{p'}).
$$
This means that $(\chi_1)^0 \not= \phi_0$ (recall that $\phi_0=(1_N)^0)$.
Thus we can set $\phi_1:=(\chi_1)^0$. Since $\ell(B)=3$, there is another one, say $\phi_2$
so that ${\mathrm{IBr}}(B)=\{\phi_0, \phi_1, \phi_2\}$ and $\phi_0:=(1_N)^0$.
Let $S_i$ be a simple $FG$-module in $B$ with $S_i\leftrightarrow\phi_i$ for $i=0,1,2$.
Obviously, $\dim_F\,S_i=1$ and $S_i{\downarrow}_N \cong F_N$ for $i=0,1$. Since $1_B=1_b$ by (\ref{Dade99}),
there is a simple $FN$-module $T$ in $b$ such that 
${\mathrm{Hom}}_{FG}(T{\uparrow}^G, S_2)\,{\not=}\,0$. Then, again by Clifford's Theorem
and by the fact that $\bar G\cong C_2$, 
$$ S_2{\downarrow}_N \cong T \text{ \ \ or \ \ } T\oplus T^g$$
where $g\in G$ with $G=\langle N,g\rangle$.

Assume that $S_2{\downarrow}_N \cong T$.
Since $1_B=1_b$ by (\ref{Dade99}) and since $S_i{\downarrow}_N \cong F_N$ for $i=0,1$,
we get that ${\mathrm{IBr}}(b)=\{\psi_0, \psi_1 \}$ where $\psi_0:=(1_N)^0$ and $\psi_1\leftrightarrow T$.
Recall that $|\bar G|=2$ and this is a $p'$-number. Hence, by Maschke's Theorem, $T{\uparrow}^G$ is semi-simple
(recall that $J(FG)=FG{\cdot}J(FN)=J(FN){\cdot}FG)$.
Now, by the Frobenius Reciprocity,
\begin{align*}
{\mathrm{Hom}}_{FG}(S_i, T{\uparrow}^G)\cong{\mathrm{Hom}}_{FN}({S_i}{\downarrow}_N, T)
&= {\mathrm{Hom}}_{FN}(F_N, T)=0
\text{ for  }i=0, 1
\\
{\mathrm{Hom}}_{FG}(S_2, T{\uparrow}^G)\cong{\mathrm{Hom}}_{FN}({S_2}{\downarrow}_N, T)
&\cong F \ \ \text{ as }F\text{-spaces}
\end{align*}
since $T\,{\not\cong}\,F_N$ 
(if $T\cong F_N$, then $S_2{\downarrow}_N \cong F_N$, so that $S_2$ is considered as a simple
$F\bar G$-module, and hence $F\bar G$ has three non-isomorphic simple modules,
a contradiction since $|\bar G|=2$).  Thus, by noting $1_B=1_b$ in (\ref{Dade99}) again, we have
$T{\uparrow}^G \cong S_2$, so that $\dim_F\,S_2 = 2\times\dim_F\,T$, a contradiction.

Hence, $S_2{\downarrow}_N \cong T\oplus T^g$.
So that, since $S_0=F_G, S_1, S_2$ are all non-isomorphic simple $FG$-modules in $B$
and since $1_B=1_b$, all non-isomorphic simple $FN$-modules in $b$ are
$T_0:=F_N, T, T^g$, and hence 
\begin{equation}\label{ell}  \ell(b)=3.  \end{equation}
Now, we have known that ${\mathrm{Irr}}(B)=\{\chi_0=1_G, \chi_1=\text{sign}, \chi_2, \chi_3\}$
and that $\chi_i{\downarrow}_N=\theta_0:=1_N$ for $i=0,1$.
Since $|\bar G|=2$ and since $1_B=1_b$ in (\ref{Dade99}), we obtain that
$$
\chi_2{\downarrow}_N = \theta_1 \text{ or }\theta_1+{\theta_1}^g
\text{ for some }\theta_1\in{\mathrm{Irr}}(b)-\{\theta_0\}
$$
and that
$$
\chi_3{\downarrow}_N = \theta_3 \text{ or }\theta_3+{\theta_3}^g
\text{ for some }\theta_3\in{\mathrm{Irr}}(b)-\{\theta_0\}.
$$
Hence, again by $1_B=1_b$ in (\ref{Dade99}), $k(b)\leq 5$.
Since $\ell(b)=3$ by (\ref{ell}), $k(b)\geq 4$. So that $k(b)=4$ or $5$.
Now recall that $p\,{\not=}\,3$ by Lemma \ref{nonCyclicElemAbe}.
Obviously, $p\,{\not=}\,2$ since $2=|G/N|\,\Big|\,|G/L|$. So that $p\geq 5$.
Moreover, since $b=B_0(F[L{\cdot}C_G(D)])$ and $b'=B_0(FL)$, 
\cite{Alp76, Dad77} implies that $b\cong b'$ as $F$-algebras.
So that 
\begin{equation}\label{Alperin-Dade} \qquad
k(b)=k(b') \text{ and }\ell(b)=\ell(b').
\end{equation}
Hence, if $k(b)=5$, then $k(b')=5$ and $\ell(b')=3$, so that Lemma \ref{k=5ell=3SimpleGp} yields that
we have a contradiction. Thus, $k(b)=4$. Namely
$$ k(b)=4 \text{ and }\ell(b)=3.
$$
Again by (\ref{Alperin-Dade}), $k(b')=4$ and $\ell(b')=3$. Then since $b'=B_0(FL)$, 
we finally have that $D\cong C_2\times C_2$ by Lemma \ref{k=4ell=3SimpleGp}(ii). 
\end{proof}

\section{What we can say if $k(B)=4$ and $\ell(B)=2$}

The purpose of this section is to prove the following theorem:

\begin{Theorem}\label{k=4ell=2MainTheorem}
Let $B:=B_0(FG)$, and let $D$ be a Sylow $p$-subgroup of $G$. 
If $k(B)=4$ and $\ell(B)=2$, then $|D|=5$.
\end{Theorem}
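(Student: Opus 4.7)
The claim amounts to showing that $D$ is cyclic of prime order, and the plan mirrors the scheme of Theorems~\ref{MainTheorem} and~\ref{k=4ell=3MainTheorem}: handle the cyclic case by a direct numerical identity and then rule out every non-cyclic possibility via reduction to the non-abelian simple case together with the {\sf CFSG}. If $D$ is cyclic, Dade's theorem on blocks with cyclic defect gives $\ell(B)=e$ (the inertial index) and $k(B)=e+m$ with $|D|=me+1$, where $m$ is the multiplicity of the exceptional characters; from $\ell(B)=2$ and $k(B)=4$ one reads $e=m=2$, hence $p=5$ and $|D|=5$. So from now on I would assume for contradiction that $D$ is non-cyclic.

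For the low primes: $p=2$ is excluded because Lemma~\ref{HZCp=2} forces $D$ abelian, after which the eventual numerical identity $|D|-1=2|E|$ supplied by Lemma~\ref{k-ell=2}(v) is absurd ($|D|-1$ is odd when $|D|$ is a $2$-power); alternatively, classification-style inspection of non-abelian simple groups with abelian Sylow $2$-subgroup shows that $(k,\ell)=(4,2)$ never occurs as the invariants of a principal $2$-block. The case $p=3$ is excluded by combining $k_0(B)\equiv 0\pmod 3$ from \cite[Corollary~1.6]{Lan81} with the abelianness granted by Lemma~\ref{k-ell=2}(iv) after the reductions below, together with the incompatibility of $|D|-1=2|E|$ with $|D|$ a non-trivial $3$-power and $\gcd(|E|,3)=1$.

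From now on $p\geq 5$ and $D$ is non-cyclic. The structural reductions are formal copies of those in Section~5: $O_{p'}(G)=1$ since $B$ is principal; $O_p(G)=1$ by analysing the block of $G/O_p(G)$ dominated by $B$ and invoking Theorem~\ref{MainTheorem}; $O^p(G)=G$ by \cite[Theorem]{KK96} (else $B\cong b\otimes_F FC_p$ and $k(b)\cdot p=4$ is impossible for $p\geq 5$); and $G=O^{p'}(G)$ via \cite{Alp76, Dad77}. A direct-product argument using multiplicativity of $\ell$ across principal blocks and $\ell(B_i)\geq 2$ for each non-$p$-nilpotent factor then forces $G$ to be non-abelian simple with non-cyclic elementary abelian Sylow $p$-subgroup. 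At this point the {\sf CFSG} is invoked. For $G=\mathfrak{A}_n$, non-cyclic elementary abelian $D$ with $p\geq 5$ forces $2p\leq n<p^2$, and \cite[Theorem~5.4]{MO91} combined with Lemma~\ref{k-ell=2}(v) applied to $N_G(D)$ produces a contradiction. Sporadic $G$ are excluded by Lemma~\ref{sporadic}. For $G$ of Lie type in defining characteristic, abelianness of $D$ forces $G\cong\mathrm{PSL}_2(q)$ with $p\mid q$, and $\ell(B)=(q-1)/\gcd(2,q-1)$ from \cite[8.5]{Hum06} gives $q=5$, making $D\cong C_5$ cyclic, a contradiction. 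The remaining Lie-type case in non-defining characteristic is handled by Lemma~\ref{the16simples} for the $16$ exceptional simple groups on the Malle--Testerman list, and for the rest Brou\'e--Michel (Lemma~\ref{BroueMichel}) transports $(k(B),\ell(B))=(4,2)$ to $\mathbb{B}:=B_0(F\,N_G(D))$; since $D\triangleleft N_G(D)$, Lemma~\ref{k-ell=2} applies to $\mathbb{B}$, and in particular conclusion~(v) gives $|D|-1=2|E|$ where $E$ is a $p'$-complement in $N_G(D)/O_{p'}(N_G(D))$.

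The main obstacle I anticipate is this last step. Even after Brou\'e--Michel and Lemma~\ref{k-ell=2} the problem reduces to the tight numerical identity $|D|-1=2|E|$ with $E$ acting faithfully and semi-regularly on $D^\#$ with exactly two orbits; converting this into a contradiction for every simple group of Lie type in non-defining characteristic, outside the $16$-group list of Lemma~\ref{the16simples}, requires a case-by-case analysis by Lie rank, using the rigid structure of $N_G(D)$ (essentially a torus normalizer extended by a Weyl-group piece) to show that no such $(D,E)$ can arise. Extracting a uniform contradiction here, rather than case-checking each Lie-type family, is the technical crux of the proof.
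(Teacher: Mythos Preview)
Your anticipated obstacle at the end is illusory. Once Brou\'e--Michel (or Michler--Olsson for alternating groups, or Fong--Harris \cite{FH93} for $p=2$) transports $(k,\ell)=(4,2)$ to $\mathbb B:=B_0(F\,N_G(D))$, you are in the normal-defect situation and Lemma~\ref{k=4ell=2Normal_D} applies directly: factoring out $O_{p'}$, one has $N_G(D)/O_{p'}=D\rtimes E$ with $k(D\rtimes E)=k(\mathbb B)=4$ and $k(E)=\ell(\mathbb B)=2$, so $E\cong C_2$ and the Vera~L\'opez tables \cite{LL85} force $D\rtimes E\cong D_{10}$, whence $|D|=5$. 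Even within your own framework via Lemma~\ref{k-ell=2}(v), the identity $|D|-1=2|E|$ together with $|E|=2$ (coming from $k(E)=\ell(\mathbb B)=2$) already gives $|D|=5$; no Lie-type case analysis is needed.

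The genuine gap lies earlier: you assert that $D$ is elementary abelian without justification. In the $k-\ell=1$ setting of Section~5 this came for free from \cite[Theorem~3.6]{KNST14}, but here $k-\ell=2$ and that route is unavailable. The paper establishes abelianness through Lemmas~\ref{O^p(G)k=4ell=2}, \ref{ZJ}, \ref{k-l=l(b_z)} and \ref{k=4ell=2_abelianD}: the centralizer dichotomy of Lemma~\ref{k-l=l(b_z)} puts one either in the elementary abelian case directly, or in the $p$-nilpotent-centralizer case, whereupon \cite{BCR90} supplies a stable equivalence of Morita type between $B$ and $B_0(F\,N_G(D))$ and Lemma~\ref{k-ell=2}(iv) applied to $N_G(D)/O_{p'}$ forces $D$ abelian; elementary abelianness then follows from Otokita's Loewy-length bound (Lemma~\ref{elementaryAbelian}). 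Without this groundwork, your invocation of \cite{KK96} for the $O^p(G)=G$ reduction is unjustified (that splitting theorem needs $D=Q\times C_p$, which is only available once $D$ is known to be elementary abelian), and your $p=2,3$ arguments via Lemma~\ref{k-ell=2}(v) presuppose its hypotheses before they are verified. The paper instead handles $p=2$ by Navarro--Tiep abelianness plus \cite{FH93} plus Lemma~\ref{k=4ell=2Normal_D}, and $p=3$ by Kessar--Malle height zero \cite{KM13} giving $k(B)=k_0(B)$, contradicting Landrock's $k_0(B)\equiv 0\pmod 3$.
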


\noindent
In the following we list lemmas to give a complete proof of
Theorem \ref{k=4ell=2MainTheorem}. 

\begin{Notation}\label{H}
Throughout this section we assume that $G$ is a finite group with a Sylow
$p$-subgroup $D$ with $|D|=:p^d$ for some $d\geq 1$,
$B:=B_0(FG)$, and
we set $H:= N_G\Big(Z(J(D))\Big)$ where $J(D)$ is the Thompson subgroup of $D$,
and $\bar H:=H/O_{p'}(H)$.
\end{Notation}

\begin{Lemma}\label{k=4ell=2Normal_D}
If $k(B)=4$, $\ell(B)=2$ and $D\vartriangleleft G$, then
$|D|=5$ and $B\cong F\,D_{10}$ as $F$-algebras and also  
as interior $D$-algebras. 
\end{Lemma}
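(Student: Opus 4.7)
My plan is as follows. As $B$ is the principal block, the standard reduction $B_0(FG)\cong B_0(F[G/O_{p'}(G)])$ lets me assume $O_{p'}(G)=1$. Combined with $D\vartriangleleft G$, Fong--Gasch\"utz's theorem (already invoked in Lemma~\ref{k=4ell=3Normal_D}) shows that $B$ is the unique block of $FG$, so $k(G)=k(B)=4$ and $\ell(G)=\ell(B)=2$. Schur--Zassenhaus then yields a semidirect decomposition $G=D\rtimes E$ with $E$ a $p'$-subgroup.

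Next I would compute $\ell(G)$ from this decomposition. Since $D=O_p(G)$ is a normal $p$-subgroup, it acts trivially on every simple $FG$-module (the fixed points of a normal $p$-subgroup on a nonzero $FG$-module being nonzero), so the irreducible Brauer characters of $G$ are exactly those of $G/D\cong E$. Because $E$ is a $p'$-group, $\ell(G)=\ell(E)=k(E)=2$. A group with exactly two conjugacy classes must be cyclic of order~$2$, so $E\cong C_2$; in particular $p$ is odd, and $|G|=2|D|$.

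With $k(G)=4$ in hand, I then invoke Landau's enumeration \cite[Table~1, p.~309]{LL85}: the finite groups with exactly four conjugacy classes are $C_4$, $C_2\times C_2$, $D_{10}$, and $\mathfrak{A}_4$. The requirement that $G$ possess a normal Sylow $p$-subgroup with cyclic quotient of order~$2$ for an odd prime~$p$ eliminates the two abelian candidates (for which $p$ would be forced to equal $2$ and then $\ell(B)=k(B)=4$), and also eliminates $\mathfrak{A}_4$ (whose normal Sylow subgroup is the $2$-Sylow, of index~$3$, while the $3$-Sylow is non-normal). Only $G\cong D_{10}$ with $p=5$ survives, whence $|D|=5$ and $B=FG\cong FD_{10}$ as $F$-algebras. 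The interior $D$-algebra isomorphism is then automatic, since $D=C_5$ is the unique Sylow $5$-subgroup of $D_{10}$, so the embedding $D\hookrightarrow G$ is canonical.

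There is no serious obstacle here once the auxiliary facts are assembled; the only mildly subtle point is the identification $\ell(G)=k(E)$, relying on the triviality of the action of the normal Sylow $p$-subgroup on simple modules, after which Landau's list does all the enumerative work.
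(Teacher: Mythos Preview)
Your argument is correct and follows essentially the same route as the paper: reduce to $O_{p'}(G)=1$, invoke Fong--Gasch\"utz to identify $B$ with $FG$, split $G=D\rtimes E$ by Schur--Zassenhaus, read off $k(G)=4$ and $k(E)=\ell(G)=2$, and finish with the classification \cite[Table~1]{LL85}. The paper compresses the final elimination into a single citation, while you spell it out.

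One small slip: in eliminating the abelian candidates $C_4$ and $C_2\times C_2$ you write that ``$p$ would be forced to equal $2$ and then $\ell(B)=k(B)=4$''. If $p=2$ and $G$ is a $2$-group, then $FG$ is local and $\ell(B)=1$, not $4$. The elimination is still immediate, since you have already shown $p$ is odd (as $E\cong C_2$ is a $p'$-group), and a group of order $4$ has no nontrivial odd-order Sylow subgroup; alternatively, $\ell(B)=1\neq 2$ gives the contradiction. This does not affect the validity of the proof.
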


\begin{proof}
Since $B$ is principal, we can assume $O_{p'}(G)=1$. Then by Fong-Gasch{\"u}tz's
Theorem, $FG$ has only one block, say $B$. By Schur-Zassenhaus' Theorem,
$G=D\rtimes E$ for a $p'$-subgroup $E$ of $G$.
Thus, $k(G)=k(B)=4$ and $2=\ell(B)=\ell(G)=\ell(G/D)=\ell(E)=k(E)$.
Hence, \cite[Table 1]{LL85} implies the assertion.
\end{proof}

\begin{Hypothesis}\label{Hypo_k=4ell=2}
Suppose that $k(B)=4$ and $\ell(B)=2$ till just before the proof of Theorem \ref{k=4ell=2MainTheorem}.
Further, because of Lemma \ref{k=4ell=2Normal_D}, we assume that
Theorem \ref{k=4ell=2MainTheorem} holds for any finite group $\mathfrak G$ such that
$|\mathfrak G|<|G|$.
\end{Hypothesis}

\begin{Lemma}\label{G=O^{p'}(G)k=4ell=2}
We can assume that $G=O^{p'}(G)$.
\end{Lemma}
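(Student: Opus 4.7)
The plan is to mimic the proof of Lemma \ref{O^p'(G)} (the analogous reduction in the $k(B)=3$ case), with induction replacing the contradiction derived there. Assume $O^{p'}(G)\lneq G$ and choose $N\vartriangleleft G$ with $G/N$ simple and $p\,{\not|}\,|G/N|$. Set $b:=B_0(FN)$ and $H:=N\cdot C_G(D)$, which is normal in $G$ by the Frattini argument. Simplicity of $G/N$ forces either $H=G$ or $H=N$.

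In the case $H=G$, the theorems of Alperin \cite{Alp76} and Dade \cite{Dad77} give $B\cong b$ as $F$-algebras, so $k(b)=4$ and $\ell(b)=2$. Because $p\,{\not|}\,|G/N|$, $D$ is also a Sylow $p$-subgroup of $N$, and since $|N|<|G|$, Hypothesis \ref{Hypo_k=4ell=2} forces $|D|=5$, so the conclusion of Theorem \ref{k=4ell=2MainTheorem} already holds for $G$.

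In the case $H=N$, Lemma \ref{1_B=1_b}(ii) yields $\Irr(G/N)=\Irr(B\,|\,1_N{\uparrow}^G)\subseteq\Irr(B)$, hence $k(G/N)\leq 4$. By \cite[Table 1]{LL85} (and noting that the smallest non-abelian simple group has $5$ conjugacy classes), $G/N\cong C_r$ for a prime $r\,{\not=}\,p$. Pick any non-trivial linear character $\chi_1\in\Irr(G/N)\subseteq\Irr(B)$. Exactly as in the proof of Lemma \ref{O^p'(G)}, $\chi_1{\downarrow}_N=1_N$, so $\chi_1$ is trivial on $p$-elements; taking any $g\in G$ with $\chi_1(g)\,{\not=}\,1$ and decomposing $g=g_pg_{p'}$ then forces $(\chi_1)^0\,{\not=}\,\phi_0$ by multiplicativity. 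Since $\ell(B)=2$, we obtain $\IBr(B)=\{\phi_0,(\chi_1)^0\}$ with both Brauer characters linear, so \cite[Theorem $(7)\Leftrightarrow(10)$]{Kos81,Kos90} yields $B\cong\mathbb{B}:=B_0(F\,N_G(D))$ as $F$-algebras, hence $k(\mathbb{B})=4$ and $\ell(\mathbb{B})=2$. If $N_G(D)=G$, then $D\vartriangleleft G$ and Lemma \ref{k=4ell=2Normal_D} gives $|D|=5$; otherwise $|N_G(D)|<|G|$ and induction under Hypothesis \ref{Hypo_k=4ell=2} gives $|D|=5$.

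The main obstacle is the $H=N$ case: two linear ordinary characters in $B$ alone do not contradict $(k(B),\ell(B))=(4,2)$, so the argument must exploit that \emph{both} Brauer characters are linear in order to invoke Koshitani's Morita-equivalence $B\cong B_0(F\,N_G(D))$ and then close either by the normal-Sylow lemma or by strong induction on $|G|$.
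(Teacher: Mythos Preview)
Your proof is correct and follows essentially the same route as the paper's: reduce via Alperin--Dade when $C_G(D)$ is absorbed, otherwise use Lemma~\ref{1_B=1_b} to force at least two linear ordinary characters in $B$, deduce that both irreducible Brauer characters are linear, and then invoke \cite{Kos81,Kos90} to pass to $N_G(D)$. The only cosmetic differences are that the paper sets $N:=O^{p'}(G)\cdot C_G(D)$ directly (so $G/N$ need not be simple and the cases $k(G/N)=2,3,4$ are handled separately) rather than first choosing $N$ with $G/N$ simple, and that your final case split on whether $N_G(D)=G$ is unnecessary: Lemma~\ref{k=4ell=2Normal_D} applies directly to $\mathbb B=B_0(F\,N_G(D))$ since $D\vartriangleleft N_G(D)$, without any appeal to induction.
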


\begin{proof}
Set $L:=O^{p'}(G)$ and $b:=B_0(FL)$.
Suppose $\,G{\not=}\,L$.
Set $N:= L\,C_G(D)$.
If $N=G$, then \cite{Alp76, Dad77} implies that $k(B)=k(b)$ and $\ell(B)=\ell(b)$,
so that Hypothesis \ref{Hypo_k=4ell=2} yields that $|D|=5$ since $|L|<|G|$.
Hence we can assume $N\lneqq G$. Set $\bar G:=G/N$. 
Then, by Lemma \ref{1_B=1_b}(i)--(ii), it holds
$k(\bar G)\leq k(B)=4$. Since $\bar G\,{\not=}\,1$, $k(\bar G)\in\{ 2,3,4\}$.
If $k(\bar G)=4$, then $N\leq\bigcap_{\chi\in{\mathrm{Irr}}(B)}\,\ker(\chi) = O_{p'}(G)$
by \cite[Theorem 5.8.1]{NT89}, so $p\,{\not |}\, |N|$, and hence $p\,{\not |}\, |G|$, a contradiction.

Suppose $k(\bar G)=3$. Then, by \cite[Table 1]{LL85}, $\bar G = C_3$ or $\mathfrak S_3$.
Hence $\bar G$ has two distinct linear ordinary characters, so that $B$ has two distinct 
linear ordinary characters since ${\mathrm{Irr}}(\bar G)\subseteq {\mathrm{Irr}}(B)$,
say $\chi_0:=1_G$ and $\chi_1$,  by Lemma \ref{1_B=1_b}(i)--(ii).
Set $\phi_i:=(\chi_i)^0$ for $i=0,1$. Then, $\phi_0$ and $\phi_1$ are Brauer characters of $B$ and
note that $\phi_0\,{\not=}\,\phi_1$ as in the middle of the proof of Theorem \ref{k=4ell=3MainTheorem}.
Hence ${\mathrm{IBr}}(B)=\{\phi_0, \phi_1\}$ since $\ell(B)=2$. Namely, all irreducible Brauer characters
in $B$ are of degree one. Hence by \cite[Theorem $(5)\Leftrightarrow (10)$]{Kos81, Kos90},
$k(B)=k(\mathbb B)$ and $\ell(B)=\ell(\mathbb B)$, where $\mathbb B:=B_0(F\,N_G(D))$.
Thus, Lemma \ref{k=4ell=2Normal_D} yields the assertion.

Finally, suppose $k(\bar G)=2$. Then $\bar G\cong C_2$ by \cite[Table 1]{LL85}.
Thus as above, $B$ has two distinct linear ordinary characters, and hence 
we have the assertion.
\end{proof}

\begin{Lemma}\label{O^p(G)k=4ell=2}
If $O^p(G)\,{\lneqq}\,G$, then $D$ is abelian.
\end{Lemma}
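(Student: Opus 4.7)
The plan is: since $O^p(G) \lneqq G$, fix $N \vartriangleleft G$ with $G/N \cong C_p$ and put $b := B_0(FN)$. By Lemma \ref{Gallagher|G/N|=p}, $1_B = 1_b$ and ${\mathrm{Irr}}(B \mid 1_N{\uparrow}^G) = {\mathrm{Irr}}(G/N)$, so all $p$ linear characters of $G/N$ lie in ${\mathrm{Irr}}(B)$; this forces $p \leq k(B) = 4$, so $p \in \{2, 3\}$. If $p = 2$, Lemma \ref{HZCp=2} immediately delivers the desired conclusion, and the entire task reduces to excluding $p = 3$.

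Assume $p = 3$, and write ${\mathrm{Irr}}(B) = \{\chi_0 = 1_G, \chi_1, \chi_2, \chi_3\}$ with $\chi_0, \chi_1, \chi_2$ lifted from ${\mathrm{Irr}}(G/N)$, and ${\mathrm{IBr}}(B) = \{\phi_0 = (1_G)^0, \phi_1\}$. Because $G/N$ is a $3$-group, every $p$-regular element of $G$ lies in $N$, so $\chi_i^0 = \phi_0$ for $i = 0, 1, 2$. Clifford theory applied to $\chi_3$ gives either (a) $\chi_3{\downarrow}_N = e\vartheta$ with $\vartheta \in {\mathrm{Irr}}(N)$ being $G$-invariant, or (b) $\chi_3{\downarrow}_N = \vartheta + \vartheta^g + \vartheta^{g^2}$ with $\vartheta$ non-$G$-invariant, where $gN$ generates $G/N$. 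I eliminate case (a) by using ${\mathrm{H}}^2(C_3, \mathbb{C}^\times) = 0$: then $\vartheta$ admits three extensions to $G$, all of which belong to $B$ (since $1_B = 1_b$). If $\vartheta = 1_N$, these extensions are precisely $\chi_0, \chi_1, \chi_2$, forcing $\chi_3$ to coincide with one of them; if $\vartheta \neq 1_N$, they are disjoint from $\chi_0, \chi_1, \chi_2$ and yield $k(B) \geq 6$. So case (b) holds, and since every $\vartheta' \in {\mathrm{Irr}}(b)$ is a constituent of some $\chi_i{\downarrow}_N$ (by $1_B = 1_b$), I obtain ${\mathrm{Irr}}(b) = \{1_N, \vartheta, \vartheta^g, \vartheta^{g^2}\}$, so $k(b) = 4$.

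In parallel, let $S_1$ be the simple $FG$-module in $B$ affording $\phi_1$. The modular Clifford analysis of $S_1{\downarrow}_N$ yields two sub-cases: either $S_1{\downarrow}_N$ is a $G$-invariant simple $FN$-module $T$ (necessarily $T \neq F_N$, since $S_1 \neq S_0 = F_G$), or $S_1{\downarrow}_N = T \oplus T^g \oplus T^{g^2}$ with $T$ non-$G$-invariant. Applying Frobenius reciprocity to the projective cover of each simple $FN$-module in $b$ (and using $1_B = 1_b$ to ensure $P(T')\uparrow^G \in B$), every simple $FN$-module in $b$ occurs as a composition factor of some $S_i{\downarrow}_N$; this forces $\ell(b) = 2$ in the first sub-case and $\ell(b) = 4$ in the second.

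Each sub-case contradicts the setup. If $\ell(b) = 4$, then $k(b) = \ell(b) = 4$ forces $b$ to have defect zero (as $k > \ell$ whenever the defect is positive, by Brauer), whence $k(b) = \ell(b) = 1$, absurd. If $\ell(b) = 2$, then $b = B_0(FN)$ has $k(b) = 4$ and $\ell(b) = 2$, and Hypothesis \ref{Hypo_k=4ell=2} applied to $N$ (of order $|G|/3 < |G|$) forces the Sylow $3$-subgroup of $N$ to have order $5$, absurd. Therefore $p = 3$ is impossible, so $p = 2$ and $D$ is abelian by Lemma \ref{HZCp=2}. The main obstacle is the joint ordinary/modular Clifford case analysis for $p = 3$: one must correctly pair each branch of the decomposition of $\chi_3{\downarrow}_N$ with that of $S_1{\downarrow}_N$ and extract the decisive pair $(k(b), \ell(b))$ in each combination.
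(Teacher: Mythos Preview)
Your proof is correct and follows essentially the same route as the paper: reduce to $p\in\{2,3\}$ via Lemma~\ref{Gallagher|G/N|=p}, handle $p=2$ by Lemma~\ref{HZCp=2}, and for $p=3$ run the ordinary and modular Clifford analyses on $\chi_3{\downarrow}_N$ and $S_1{\downarrow}_N$ to reach $(k(b),\ell(b))=(4,2)$ and invoke the induction hypothesis. The only difference is in the $G$-invariant sub-case for $\chi_3{\downarrow}_N$, where the paper instead deduces $\mathrm{Irr}(b)=\{1_N,\theta\}$, hence $k(b)=2$, and applies Brandt's theorem (the paper also cites Landrock to record that $\chi_3$ has positive height, but this is not actually used downstream).
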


\begin{proof}
First of all, by Lemma \ref{HZCp=2}, $D$ is abelian if $p=2$.
Hence we can assume that $p\,{\not=}\,2$.
Suppose that $D$ is non-abelian, and hence $d\geq 3$.

We want to have a contradiction.
By the assumption, $G$ has a normal subgroup $N\vartriangleleft G$ such that $|G/N|=p$.
Let $b:=B_0(FN)$. Then, $b$ is $G$-invariant, so that 
from Lemma \ref{Gallagher|G/N|=p} 
\begin{equation}\label{1B=1b}
1_B=1_b         \qquad\text{ and }
\end{equation}
\begin{equation}\label{Irr-G/N}
{\mathrm{Irr}}(B\,|\,{1_N}{\uparrow}^G) = {\mathrm{Irr}}(G/N).
\end{equation}
This means that
$B$ has $p$ distinct linear ordinary characters since $G/N\cong C_p$. 
That is, $p\leq|{\mathrm{Irr}}(B)|=k(B)$.
Then, since $k(B)=4$ 
and we are assuming $p\geq 3$, it holds that $p=3$.
Then, by \cite[Corollary 1.6]{Lan81}, $k_0(B)=3$ since $k(B)=4$.
On the other hand, since $G/N\cong C_3$, (\ref{Irr-G/N}) implies that
$B$ has three distinct linear ordinary characters. So that we can write 
${\mathrm{Irr}}(B)=\{\chi_0:=1_G, \chi_1, \chi_2, \chi_3\}$ such that 
$N\leq\ker{\chi_i}$ for $i=0,1,2$ and hence $\chi_0(1)=\chi_1(1)=\chi_2(1)=1$,
and that $\chi_3$ has positive height.
There is a character $\theta\in{\mathrm{Irr}}(b)$
such that $\langle{\chi_3}{\downarrow}_N, \, \theta\rangle^N\,{\not=}\,0$ by (\ref{1B=1b}).
Obviously, $\theta\,{\not=}\,1_N$ by (\ref{Irr-G/N}) since $\chi_3\,{\not\in}\,\{\chi_0, \chi_1, \chi_2\}$.

Assume first that $G_\theta=G$. Then, $\chi_3{\downarrow}_N=\theta$ since $G/N$ is cyclic.
So that $\langle\theta{\uparrow}^G,\,\chi_3\rangle^G=1$.
By the above,  $\langle\theta{\uparrow}^G,\,\chi_i\rangle^G=0$ for $i=0, 1, 2$.
Hence (\ref{1B=1b}) implies that ${\mathrm{Irr}}(b)=\{1_N, \theta\}$. Thus $k(b)=2$, so that
\cite[Theorem A]{Bra82} yields that $p=2$, a contradiction since $p=3$.
 
Thus $G_\theta\lneqq G$, and hence $\chi_3{\downarrow}_N=\theta+\theta^g+\theta^{g^2}$
where $G/N =\langle gN\rangle$ for an element $g\in G$.
This implies that ${\mathrm{Irr}}(b)=\{1_N, \theta, \theta^g, \theta^{g^2}\}$ by (\ref{1B=1b}).
Namely, $k(b)=4$.
Since $\ell(B)=2$, there is a simple $FG$-module $S$ in $B$ with $S\,{\not\cong}\,F_G$.
Since $G/N\cong C_p=C_3$ which is cyclic, we know by (\ref{1B=1b}) and Clifford's Theorem that
$$
S{\downarrow}_N \cong T\oplus T^g\oplus T^{g^2} \ \text{ or } \ 
S{\downarrow}_N \cong T 
\text{ for a simple }FN\text{-module }T\text{ in }b.
$$ 
If the first case occurs, then since $T\,{\not\cong}\,F_N$ 
(otherwise the multiplicity of $T$ in $S{\downarrow}_N$ as a direct summand
is not one, that contradicts the fact that $G/N$ is cyclic), 
$F_N, T, T^g, T^{g^2}$ are all non-isomorphic simple $FN$-modules in $b$ by (\ref{1B=1b}),
which yields that $\ell(b)\geq 4$, a contradiction since $k(b)=4$.
Hence only the second case happens. Namely, $S{\downarrow}_N \cong T$. Obviously, $N$ is not $3$-nilpotent
(otherwise $G$ is $3$-nilpotent since $|G/N|=3$, contradicting the fact that $\ell(B)=2$). Now, since
all non-isomorphic simple $FG$-modules in $B$ are $F_G$ and $S$,  
it follows again from (\ref{1B=1b}) that all non-isomorphic simple $FN$-modules in $b$ are $F_N$ and $T$.
This means that $\ell(b)=2$. Apparently, $|N|<|G|$. Hence Hypothesis \ref{Hypo_k=4ell=2} implies that
a Sylow $p$-subgroup of $N$ must be of order $5$, namely, $p=5$, a contradiction.
\end{proof}

\noindent
In fact the next lemma and Lemma \ref{k-ell=2} do work quite well
to prove Theorem \ref{k=4ell=2MainTheorem}.

\begin{Lemma}[\cite{BCR90}]\label{ZJ}
Assume that 
\begin{equation}\label{C_G(v)}  \
\text{$p$ is odd, and $C_G(u)$ is $p$-nilpotent for every element $u \in G$ of order $p$.}
\end{equation}
\begin{enumerate}
\item[\rm (i)] 
It holds that $H=N_G(D)$
and that $N_G(D)$ controls $p$-fusion in $G$, namely $\mathcal F_D(G)=\mathcal F_D(N_G(D))$.
\item[\rm (ii)]
Set $\mathbb B:=B_0(F\,N_G(D))$. Then,
the Scott module 
${\mathrm{Sc}}(G\times N_G(D),\,\Delta D)$ with vertex $\Delta D$ induces a splendid stable
equivalence of Morita type between $B$ and $\mathbb B$
where $\Delta D:=\{(u,u)\in G\times N_G(D)\,|\, u\in D\}$
(see \cite[Chap.5 \S 8]{NT89} for the Scott modules).
\end{enumerate}
\end{Lemma}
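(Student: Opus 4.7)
The plan is to read off both parts from the Glauberman--Alperin machinery together with Brou\'e's theory of splendid stable equivalences, both of which underlie the \cite{BCR90} reference.

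For part (i), I would first record the trivial inclusion $N_G(D)\leq N_G(Z(J(D)))=H$, since $Z(J(D))$ is characteristic in $D$. For the reverse direction, by the already established Lemma~\ref{ZJ_new} we may assume $D\leq H$, so $D\in \mathrm{Syl}_p(H)$. The key step is to deduce $D\triangleleft H$: the centralizer hypothesis passes to all $p$-local subgroups of $H$, and applying the Glauberman--Thompson $p$-nilpotency theorem inside $H$ (with $p$ odd) forces $H$ to have a normal Sylow $p$-subgroup, namely $D$. This gives $H\leq N_G(D)$ and hence $H=N_G(D)$.

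For the fusion statement, I would use Alperin's fusion theorem: every isomorphism in $\mathcal F_D(G)$ is a composition of conjugations taking place in normalizers $N_G(Q)$ of Alperin-essential subgroups $Q\leq D$. The hypothesis that $C_G(u)$ is $p$-nilpotent for every element $u$ of order $p$ rules out essential subgroups that do not contain $Z(J(D))$ up to $G$-conjugation, so every such $N_G(Q)$ lies in $N_G(Z(J(D)))=H=N_G(D)$. Consequently $\mathcal F_D(G)=\mathcal F_D(N_G(D))$.

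For part (ii), I would invoke Brou\'e's theorem on splendid stable equivalences. Since (i) gives that the principal blocks $B$ and $\mathbb B$ have isomorphic fusion systems over $D$, and since $\mathrm{Sc}(G\times N_G(D),\Delta D)$ is, by construction, the unique indecomposable summand of $\mathrm{Ind}_{\Delta D}^{G\times N_G(D)}(F)$ with trivial module in its head --- hence of vertex $\Delta D$ and trivial source --- Brou\'e's theorem (refined by Rickard and Linckelmann) yields that this bimodule induces a splendid stable equivalence of Morita type between $B$ and $\mathbb B$. Concretely, one verifies that for each non-trivial $p$-subgroup $Q\leq D$ the Brauer construction of the Scott module at $\Delta Q$ realizes a Morita equivalence between the principal blocks of $C_G(Q)$ and $C_{N_G(D)}(Q)$; this reduction follows from part (i) via standard Brauer pair computations. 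The hard part will be extracting $D\triangleleft H$ cleanly in (i), since it requires a careful application of the Glauberman--Thompson machinery inside $H$; part (ii) is formally deeper but becomes mechanical once (i) has pinned down the fusion.
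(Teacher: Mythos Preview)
Your proposal has a genuine gap in part~(i), specifically in the step deducing $D\vartriangleleft H$. The Glauberman--Thompson theorem is a criterion for $p$-nilpotency (existence of a normal $p$-complement), not for a normal Sylow $p$-subgroup; applied inside $H$ it is vacuous, since $N_H(Z(J(D)))=H$ tautologically. The paper's argument for this step is quite different and, crucially, uses the ambient running Hypothesis~\ref{Hypo_k=4ell=2} ($k(B)=4$, $\ell(B)=2$) of \S6: it first disposes of abelian $D$ via Burnside, and in the non-abelian case invokes Lemma~\ref{O^p(G)k=4ell=2} to obtain $O^p(G)=G$, transfers this to $O^p(\bar H)=\bar H$ via the cohomology isomorphism of Lemma~\ref{ZJ_new}, and then appeals to \cite[Lemma~9.3]{BCR90} to conclude that $\bar H$ is a Frobenius group with kernel $\bar D\cong D$, whence $D\vartriangleleft H$. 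You never use the input $O^p(G)=G$, and without it (or an explicit substitute) the Frobenius-kernel conclusion from \cite{BCR90} is not available.

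For the fusion statement the paper does not go through Alperin's fusion theorem and essential subgroups; instead it observes (via \cite[proof of Theorem~8.3]{BCR90}) that condition~\eqref{C_G(v)} makes $G$ ${\mathrm{Qd}}(p)$-free and then applies Glauberman's ZJ-theorem directly to get $\mathcal F_D(G)=\mathcal F_D(H)$. Your Alperin-style sketch (``the hypothesis rules out essential subgroups not containing $Z(J(D))$'') is plausible in spirit but would need a genuine argument; the ZJ route is shorter and already packaged. For part~(ii), once $H=N_G(D)$ is in hand the paper simply cites \cite[Theorem~8.3]{BCR90}; your Brou\'e--Linckelmann gluing outline is a modern reformulation of essentially the same mechanism, so that part is fine.
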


\begin{proof}
(i) If $D$ is abelian, then by Burnside's fusion theorem, $N_G(D)$ controls $p$-fusion in $G$ and
$J(D)=D$ by the definition of  Thompson's subgroup $J(D)$, and hence $H=N_G(D)$.

Thus we may assume that $D$ is non-abelian. 
Then, by Lemma \ref{O^p(G)k=4ell=2}, $O^p(G)=G$.
So that ${\mathrm{Ext}}^1_{FG}(F,F)={\mathrm{H}}^1(G,F)=0$ (see e.g. \cite[Chap.I Corollary 10.13]{Lan83}),
and hence $O^p(H)=H$ by Lemma \ref{ZJ_new} and \cite[Chap.I Corollary 10.13]{Lan83}.
Set $\bar H:=H/O_{p'}(H)$, and then $O^p(\bar H)=\bar H$. 
Now, since we may assume that $D\in{\mathrm{Syl}}_p(H)$ by Lemma \ref{ZJ_new}, 
it follows from (\ref{C_G(v)}) and 
\cite[Lemma 9.3]{BCR90} that $\bar H$ is a Frobenius group whose kernel is $\bar D:=(D\, O_{p'}(H))/O_{p'}(H) \cong D$. 
Hence $\bar D\vartriangleleft \bar H$, so that $D\vartriangleleft H$. This yields that $H\leq N_G(D)$.
On the other hand, since $Z(J(D)) \ {\mathrm{char}} \ J(D) \ {\mathrm{char}} \ D$, we know that $N_G(D)\leq H$.
Now, by (\ref{C_G(v)}) and \cite[the proof of Theorem 8.3]{BCR90}, we know that $G$ does not have the quadratic group ${\mathrm{Qd}}(p)$
in any subquotient of $G$. Hence well-known Glauberman's ZJ-theorem 
(see e.g. \cite[Theorem 1.21]{Cra11}) implies that
$\mathcal F_D(G)=\mathcal F_D(H)$. Therefore $\mathcal F_D(G)=\mathcal F_D(N_G(D))$ since we have proved that
$H=N_G(D)$.

(ii) 
Since we know $H=N_G(D)$ by (i), \cite[Theorem 8.3]{BCR90} implies the assertion.
\end{proof}

\begin{Lemma}\label{k-l=l(b_z)}
One of the following two cases happens:
\begin{enumerate}
\item[\rm (I)] $D$ is elementary abelian.
\item[\rm (II)] 
There are precisely {two} conjugacy classes of $G$ of non-trivial $p$-elements, and 
$C_G(u)$ is $p$-nilpotent for every element $u\in D^\#$.
\end{enumerate}
\end{Lemma}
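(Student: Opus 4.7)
The plan is to apply Brauer's subsection formula and then analyse the two arising cases. By Brauer's Third Main Theorem, for every non-trivial $p$-element $u \in D$ the block of $C_G(u)$ corresponding to $B$ is $B_0(FC_G(u))$. Combining this with \cite[Theorems 5.4.13(ii) and 5.6.1]{NT89} yields
$$
k(B) - \ell(B) \;=\; \sum_{u} \ell\!\bigl(B_0(FC_G(u))\bigr),
$$
where $u$ runs over a set of representatives of the $G$-conjugacy classes of non-trivial $p$-elements of $D$. By Hypothesis \ref{Hypo_k=4ell=2} the left-hand side equals $2$, and each summand is a positive integer, so exactly one of the following must hold: \emph{Case (A)}, a single such class whose corresponding term equals $2$; or \emph{Case (B)}, exactly two such classes, both with corresponding term equal to $1$.

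In Case (B), the equality $\ell(B_0(FC_G(u_i))) = 1$ is equivalent to $C_G(u_i)$ being $p$-nilpotent (see for instance \cite[Theorem 5.8.3]{NT89}). Any $u \in D^\#$ is $G$-conjugate to some $u_i$, so $C_G(u)$ is $G$-conjugate to a $p$-nilpotent subgroup and is therefore itself $p$-nilpotent. Combined with the assertion that there are precisely two $G$-classes of non-trivial $p$-elements, this is exactly the conclusion (II).

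In Case (A), all elements of $D^\#$ are $G$-conjugate to any fixed $z \in Z(D)^\#$, so they share the common order $p$ of $z$; hence $D$ has exponent $p$. To establish (I) it remains to show that $D$ is abelian. I plan to argue via the saturated fusion system $\mathcal F := \mathcal F_D(G)$, in which $D^\#$ forms a single $\mathcal F$-conjugacy class. Since $\mathrm{Aut}_G(D) = N_G(D)/C_G(D)$ stabilizes the characteristic subgroup $Z(D)$, any $\mathcal F$-conjugation taking $z \in Z(D)$ to some $u \in D \setminus Z(D)$ would have to factor through an essential subgroup; a careful application of Alperin's fusion theorem together with the receptivity axiom (exploiting that $z$ is fully $\mathcal F$-centralized) is intended to rule this out, forcing $u \in Z(D)$. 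Hence $D^\# \subseteq Z(D)$, so $D = Z(D)$ is abelian, and combined with being of exponent $p$ it is elementary abelian, yielding (I).

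The main obstacle is precisely the fusion-theoretic step in Case (A): the natural attempt to equate $|C_D(u)|$ with $|C_G(u)|_p = |D|$ (which would force $C_D(u) = D$ and so $u \in Z(D)$) relies on $C_D(u)$ being a Sylow $p$-subgroup of $C_G(u)$, and this fails in general when $u \notin Z(D)$. The remaining steps --- the Brauer decomposition and the translation $\ell(B_0(FH)) = 1 \Leftrightarrow H$ is $p$-nilpotent --- are routine.
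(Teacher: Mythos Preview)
Your handling of Case~(B) is correct and coincides with the paper's argument. The genuine problem is Case~(A).

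The fusion-theoretic step you outline cannot be completed, and not because of a missing technical lemma: there exist finite groups in which all non-trivial $p$-elements are $G$-conjugate while $D$ is \emph{non-abelian} (indeed extraspecial of order $p^{3}$). These are precisely the groups appearing in parts~(ii) and~(iii) of \cite[Theorem~A]{KNST14}. In such a group one certainly has $z\in Z(D)^{\#}$ fused in $G$ to some $u\in D\setminus Z(D)$, so no Alperin-fusion or receptivity argument can force $u\in Z(D)$. Your own final paragraph already identifies the failure mode: $C_{D}(u)$ need not be Sylow in $C_{G}(u)$, and in the exceptional examples it isn't.

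The paper resolves Case~(A) quite differently. It invokes the classification-dependent result \cite[Theorem~A]{KNST14}, which says that if $G$ has a single class of non-trivial $p$-elements then either $D$ is elementary abelian or $(p,G)$ lies on an explicit finite list (the extraspecial cases). It then uses Lemma~\ref{[KNST14,Theorem A]} (which in turn relies on \cite{Hen07}) to check that every group on that list satisfies $k(B)-\ell(B)\neq 2$, contradicting Hypothesis~\ref{Hypo_k=4ell=2}. Thus only the elementary abelian alternative survives. In short, Case~(A) is not a fusion-theoretic statement at all; the specific numerical hypothesis $k(B)-\ell(B)=2$ is what eliminates the non-abelian exceptions, and the {\sf CFSG} (via \cite{KNST14}) is genuinely needed here.
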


\begin{proof} 
Since $k(B)-\ell(B)=2$, \cite[Theorems 5.9.4 and 5.6.1]{NT89} yields that the number of conjugacy classes of
$p$-elements in $G^\#$ is one or two.

So, assume first that the number is one. 
Then, $\ell(B_0(F[C_G(u)]))=2$ for every non-trivial $p$-element $u$ in $G$ 
by \cite[Theorems 5.9.4 and 5.6.1]{NT89}, and  all $p$-elements in $G^\#$ are $G$-conjugate.
If $G$ 
were one of the finite groups in \cite[(ii), (iii) of Theorem A]{KNST14},
then, by Lemma \ref{[KNST14,Theorem A]}, $k(B)-\ell(B)\,{\not=}\,2$, a contradiction.
Thus, by \cite[Theorem A]{KNST14}, $D$ is elementary abelian, so that Case (I) occurs.

Hence we can assume that the number is two, namely 
$k(B)-\ell(B)=2=\ell(b_u)+\ell(b_v)$ and $\ell(b_u)=\ell(b_v)=1$, where
$b_u:=B_0(F\,C_G(u))$ and $b_v:=B_0(F\,C_G(v))$ for two distinct elements $u, v\in D^\#$.
This implies that for every element $w\in D^\#$, $C_G(w)$ is $p$-nilpotent. So, Case II happens.
\end{proof}

\begin{Lemma}\label{k=4ell=2_abelianD}
We can assume that $D$ is abelian.
\end{Lemma}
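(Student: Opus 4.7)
The plan is to assume, for contradiction, that $D$ is non-abelian and to derive a contradiction by transferring the problem to $N_G(D)$ via the stable equivalence of Morita type supplied by Lemma \ref{ZJ}(ii) and then applying Lemma \ref{k-ell=2} on the normalizer side.

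First I would rule out the easy cases. By Lemma \ref{HZCp=2}, non-abelian $D$ forces $p$ to be odd. By Lemma \ref{O^p(G)k=4ell=2} we may assume $O^p(G)=G$, for otherwise $D$ would already be abelian. Lemma \ref{k-l=l(b_z)} then leaves only case (II), so $C_G(u)$ is $p$-nilpotent for every $u\in D^\#$. In particular the standing hypothesis of Lemma \ref{ZJ} is satisfied.

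Next I would invoke Lemma \ref{ZJ}: under our hypotheses $H=N_G(D)$, and there is a splendid stable equivalence of Morita type between $B$ and $\mathbb{B}:=B_0(FN_G(D))$ induced by the Scott module $\mathrm{Sc}(G\times N_G(D),\,\Delta D)$. Such an equivalence of symmetric algebras induces an $F$-algebra isomorphism of stable centers $\bar Z(B)\cong \bar Z(\mathbb{B})$. The $p$-nilpotency hypothesis for centralizers of order-$p$ elements passes from $G$ to $N_G(D)$ (subgroups of $p$-nilpotent groups are $p$-nilpotent), so Lemma \ref{KL02-p.143}(ii) applies to both blocks and yields
\[
k(\mathbb{B})-\ell(\mathbb{B})+1 \;=\; \dim_F \bar Z(\mathbb{B}) \;=\; \dim_F \bar Z(B) \;=\; k(B)-\ell(B)+1 \;=\; 3,
\]
hence $k(\mathbb{B})-\ell(\mathbb{B})=2$.

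Finally I would pass to $\bar H=H/O_{p'}(H)$, noting that the natural projection identifies $\mathbb{B}$ with $B_0(F\bar H)$ without changing $k$ or $\ell$. By construction and Schur--Zassenhaus, $\bar H=\bar D\rtimes \bar E$ with $\bar D\cong D$ a normal Sylow $p$-subgroup, $O_{p'}(\bar H)=1$, and $\bar E$ acting faithfully on $\bar D$ (the kernel of that action is a normal $p'$-subgroup of $\bar H$, hence trivial). The $p$-nilpotency of $C_G(u)$ for $u\in D^\#$ then transfers to $C_{\bar H}(\bar u)$ for every $\bar u\in \bar D^\#$ of order $p$. Thus Lemma \ref{k-ell=2} applies to $\bar H$, and its part (iv) forces $\bar D\cong D$ to be abelian, contradicting our assumption. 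The main obstacle will be the careful bookkeeping in this transfer chain: making sure that the stable equivalence really carries $k-\ell$ from $B$ to $\mathbb{B}$ via the stable-center isomorphism together with Lemma \ref{KL02-p.143}(ii), and that the $p$-nilpotent-centralizer hypothesis survives each reduction so that the hypotheses of Lemma \ref{k-ell=2} hold verbatim for $\bar H$.
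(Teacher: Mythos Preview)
Your proposal is correct and follows essentially the same route as the paper: reduce to $p$ odd via Lemma~\ref{HZCp=2}, use Lemma~\ref{k-l=l(b_z)} to secure the $p$-nilpotent--centralizer hypothesis, invoke the stable equivalence of Lemma~\ref{ZJ}(ii) to transfer $k-\ell=2$ to $\mathbb B=B_0(F\,N_G(D))$, and then apply Lemma~\ref{k-ell=2}(iv) on the quotient $\bar H$ to force $D$ abelian. The only cosmetic difference is that the paper obtains $k(\mathbb B)-\ell(\mathbb B)=k(B)-\ell(B)$ directly from \cite[Propositions~5.3 and 5.4]{Bro94}, whereas you recover it from the stable-center isomorphism together with Lemma~\ref{KL02-p.143}(ii); both are valid.
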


\begin{proof}
First of all, the assertion holds for $p=2$ by Lemma \ref{HZCp=2}.
So, suppose that $p$ is odd.
Then, from Lemma \ref{k-l=l(b_z)}, it suffices to consider the case 
that the case (II) happens in   Lemma \ref{k-l=l(b_z)}.
So that for every element $u\in D^\#$, $C_G(u)$ is $p$-nilpotent. 

We may assume that $D$ is non-abelian. Set $L:=N_G(D)$ and $\mathbb B:=B_0(FL)$.
Hence it follows from Lemma \ref{ZJ}(ii) that $B$ and $\mathbb B$ are stably equivalent
of Morita type.
Thus, by \cite[Propositions 5.3 and 5.4]{Bro94}, it holds that $k(\mathbb B)-\ell(\mathbb B)=k(B)-\ell(B)$
and that $\bar Z(\mathbb B)\cong\bar Z(B)$ as $F$-algebras.
Namely, $k(\mathbb B)-\ell(\mathbb B)=2$. 
Set $\bar L:=L/O_{p'}(L)$.
Since $\mathbb B$ is the principal block of $FL$, 
$\mathbb B\cong B_0(F\bar L)$ as $F$-algebras and also as interior $D$-algebras.
Obviously, $C_{\bar L}(\bar w)$ is $p$-nilpotent for every element $\bar w\in\bar D^\#$ 
where $\bar D:=(D\,O_{p'}(L))/O_{p'}(L) \cong D$. 
Thus we can apply $\bar L$ to the group $G$ in Lemma \ref{k-ell=2}
(note that the action of $E$ on $\bar D$ is faithful where $E$ is a $p$-complement of $\bar L$).
Therefore, it follows from Lemma \ref{k-ell=2}(iv) that $\bar D$ is abelian, which means that
$D$ is abelian, a contradiction.
\end{proof}

\begin{Lemma}\label{elementaryAbelian}
We can assume that $D$ is elementary abelian.
\end{Lemma}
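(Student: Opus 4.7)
The plan is to invoke Lemma~\ref{k-l=l(b_z)}: in Case~(I) the conclusion is immediate, so it suffices to rule out Case~(II), namely that $C_G(u)$ is $p$-nilpotent for every $u\in D^\#$. I therefore assume Case~(II), together with the outcome of Lemma~\ref{k=4ell=2_abelianD} (so $D$ is abelian), and aim for a contradiction under the further hypothesis that $D$ is not elementary abelian.

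For odd $p$, Case~(II) puts us in the setting of Lemma~\ref{ZJ}(ii), yielding a splendid stable equivalence of Morita type between $B$ and $\mathbb{B}:=B_0(F\,N_G(D))$. Brou\'e's invariance results \cite[Propositions 5.3 and 5.4]{Bro94} then give $k(\mathbb{B})-\ell(\mathbb{B})=k(B)-\ell(B)=2$. Passing to $\bar L:=N_G(D)/O_{p'}(N_G(D))$, we have $\mathbb{B}\cong B_0(F\bar L)$ as $F$-algebras, and Schur--Zassenhaus writes $\bar L=\bar D\rtimes E$ with $\bar D\cong D$ a normal abelian Sylow $p$-subgroup and $E$ acting faithfully on $\bar D$ (since $O_{p'}(\bar L)=1$ forces $C_{\bar L}(\bar D)=\bar D$). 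All hypotheses of Lemma~\ref{k-ell=2} for $\bar L$ now hold, and part~(v) supplies
\[
\bar D^\#=K_u\sqcup K_v, \qquad |K_u|=|K_v|=|E|,
\]
which, because $\bar D$ is abelian, identifies $K_u,K_v$ with the two $E$-orbits on $\bar D^\#$.

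The remaining step---and the combinatorial heart of the argument---is to show that an abelian $p$-group admitting a coprime automorphism group with exactly two equal-size orbits on non-identity elements must be elementary abelian. Since automorphisms preserve order, each orbit is homogeneous in order, so $\bar D^\#$ exhibits at most two distinct orders. If $\exp(\bar D)\geq p^3$, successive $p$-th powers of an element of maximal order would produce at least three distinct non-identity orders, a contradiction. Hence $\exp(\bar D)\leq p^2$, and if $\bar D$ is not elementary abelian then $\bar D\cong C_{p^2}^a\times C_p^b$ with $a\geq 1$. Counting the $p^{a+b}-1$ elements of order $p$ and the $p^{a+b}(p^a-1)$ elements of order $p^2$, and equating both quantities to $|E|$, gives
\[
p^{a+b}(2-p^a)=1,
\]
which is impossible whenever $a\geq 1$ and $p\geq 2$ (the left side is non-positive unless $p=2$, $a=1$, in which case it equals $0$).

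The anticipated obstacle is the prime $p=2$, for which Lemma~\ref{ZJ} is unavailable. Lemma~\ref{HZCp=2} still supplies abelianness of $D$, and I would treat elementary abelianness for $p=2$ by an independent argument: since $\ell(B)=2$ forces the inertial quotient of the Brauer correspondent to be non-trivial of odd order, and odd-order automorphism groups of non-elementary-abelian abelian $2$-groups are severely restricted by their action on the Frattini quotient and on $\Omega_1$, a direct case analysis should exclude $\exp(D)\geq 4$ and recover the same combinatorial conclusion.
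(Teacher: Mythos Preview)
Your odd-$p$ argument is correct but takes a genuinely different route from the paper. You pass through the stable equivalence of Lemma~\ref{ZJ}(ii) to transfer $k-\ell=2$ to the Brauer correspondent, then invoke Lemma~\ref{k-ell=2}(v) to obtain two \emph{equal-size} $E$-orbits on $\bar D^\#$, and finish with the elementary count $p^{a+b}(2-p^a)=1$. The paper instead bypasses the stable equivalence entirely: since $D$ is already abelian (Lemma~\ref{k=4ell=2_abelianD}), Burnside's fusion theorem shows directly that the two $G$-classes of non-identity $p$-elements are the two $E$-orbits on $D^\#$, so $\dim_F\bigl((FD)^E\bigr)=3$; then Otokita's lower bound \cite{Oto19} on $LL\bigl((FD)^E\bigr)$ forces $(p^\epsilon+p-2)/(p-1)\le 3$, whence $\exp(D)=p$. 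The paper's route is shorter, uniform in $p$, and never needs the two orbit sizes to coincide; your route avoids the external reference \cite{Oto19} and stays within the paper's own toolkit, at the cost of the heavier stable-equivalence machinery and the parity restriction.

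The $p=2$ case is a genuine gap in your write-up. The sketch (``odd-order automorphism groups of non-elementary-abelian abelian $2$-groups are severely restricted'') is not a proof, and Lemma~\ref{ZJ} is indeed unavailable. The cleanest patch within your own framework is to replace the stable equivalence by the perfect isometry of Fong--Harris \cite{FH93}, which applies since $D$ is abelian for $p=2$ by Lemma~\ref{HZCp=2}; this gives $k(\mathbb B)=k(B)$ and $\ell(\mathbb B)=\ell(B)$, hence $k(\mathbb B)-\ell(\mathbb B)=2$, after which Lemma~\ref{k-ell=2}(v) and your counting argument run verbatim (your equation $p^{a+b}(2-p^a)=1$ already has no solution for $p=2$, $a\ge 1$). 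Alternatively, the paper's Otokita argument handles all primes at once with no case distinction.
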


\begin{proof}
We can assume $O_{p'}(G)=1$ and $O^{p'}(G)=G$ by Lemma \ref{G=O^{p'}(G)k=4ell=2}.
Since $k(B)-\ell(B)=2$, it follows from the proof of Lemma \ref{k-l=l(b_z)} that
all non-identity $p$-elements are conjugate in $G$; or that there are precisely two conjugacy classes of
$G$ which have non-identity $p$-elements and $C_G(u)$ is $p$-nilpotent for every $u\in D^\#$.

If the first case occurs, then \cite[Theorem A]{KNST14} and Lemma \ref{[KNST14,Theorem A]}  imply that
$D$ is elementary abelian. Hence we can assume that the second case occurs.
Namely, there are precisely two conjugacy classes of
$G$ which have non-identity $p$-elements and $C_G(u)$ is $p$-nilpotent for every $u\in D^\#$.
Note that $D$ is abelian by Lemma \ref{k=4ell=2_abelianD}. 
Set $E:=N_G(D)/C_G(D)$.
Hence Burnside's fusion theorem says that
there are exactly three $E$-conjugacy classes of $D$, that is
$\dim_F\Big( (FD)^E  \Big)=3$ where $(FD)^E:=\{ a\in FD\,|\, y^{-1}ay=a, \ \forall y\in E \}$.
Now, by \cite[Proof of Theorem]{Oto19},
$$    (p^\epsilon +p-2)/(p-1) \leq LL\Big( (FD)^E\Big) \ \text{ where }p^\epsilon := {\mathrm{exp}}(D)$$
since $D$ is abelian. Clearly $LL\Big( (FD)^E\Big)\leq \dim_F\Big( (FD)^E  \Big)$, so that
$$ (p^\epsilon +p-2)/(p-1) \leq 3.$$ 
So $p^\epsilon+p-2\leq 3(p-1)$, and hence
if $\epsilon\geq 2$ then 
$0<(p-1)^2=p^2-2p+1\leq p^\epsilon-2p+1\leq 0$, a contradiction.
Hence $\epsilon=1$, which means that $D$ is elementary abelian.                                
\end{proof}

\begin{Lemma}\label{O_p(G)=1_k=4ell=2}
We can assume that $O_{p'}(G)=1$ and $O_p(G)=1$.
\end{Lemma}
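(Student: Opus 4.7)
The plan is a two-stage reduction mirroring Lemma \ref{Op'(G)Op(G)} of the previous section. Since $B$ is the principal $p$-block, we may assume $O_{p'}(G)=1$ by the standard reduction via \cite[Theorem 5.8.1]{NT89}. The substantive task is then to show that assuming $Q:=O_p(G)\ne 1$ either yields a contradiction or already produces $|D|=5$, so that the inductive hypothesis (Hypothesis \ref{Hypo_k=4ell=2}) closes the argument.

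Assuming $Q\ne 1$, I would set $\bar G:=G/Q$, $\bar D:=D/Q$, and $\bar B:=B_0(F\bar G)$, and invoke block domination \cite[Theorem 5.8.10 and Lemma 5.8.6(ii)]{NT89} to obtain $\mathrm{Irr}(\bar B)\subseteq\mathrm{Irr}(B)$, so $k(\bar B)\in\{1,2,3,4\}$. I would then dispatch each value of $k(\bar B)$ in turn. For $k(\bar B)=1$, $\bar D=1$ gives $D=Q\vartriangleleft G$, and Lemma \ref{k=4ell=2Normal_D} forces $|D|=5$. For $k(\bar B)=4$, equality $\mathrm{Irr}(\bar B)=\mathrm{Irr}(B)$ implies $Q\le\bigcap_{\chi\in\mathrm{Irr}(B)}\ker\chi=O_{p'}(G)=1$, a contradiction. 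For $k(\bar B)=2$, \cite[Theorem A]{Bra82} forces $|\bar D|=2$, hence $p=2$ and $\bar G$ is $2$-nilpotent by Burnside's normal $p$-complement theorem; since $D$ is abelian by Lemma \ref{k=4ell=2_abelianD}, $G$ is $2$-solvable of $2$-length one by \cite[Theorem 6.3.3]{Gor68}, so $D=O_2(G)=Q$ and $\bar D=1$, contradicting $|\bar D|=2$. For $k(\bar B)=3$, Belonogov's Theorem \ref{MainTheorem} forces $|\bar D|=3$ and $p=3$; since $|Q|\ge 3$ we have $|D|\ge 9$, and Lemma \ref{elementaryAbelian} tells us $D$ is non-cyclic elementary abelian, so \cite{KM13} yields $k_0(B)=k(B)=4$ while \cite[Corollary 1.6]{Lan81} gives $k_0(B)\equiv 0\pmod 3$, a contradiction.

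The main obstacle will be the case $k(\bar B)=3$: the character data lifted from $\bar B$ alone does not force a contradiction, so one has to combine the Kessar--Malle abelian-defect height-zero result with Landrock's $p$-divisibility of $k_0(B)$ for non-cyclic defect groups to rule out $p=3$. The remaining three cases are essentially routine, either invoking Brauer's classification of blocks with two ordinary characters or reducing directly to the normal defect group case already settled in Lemma \ref{k=4ell=2Normal_D}.
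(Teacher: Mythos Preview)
Your proposal is correct and follows essentially the same route as the paper: both reduce $O_{p'}(G)=1$ trivially, then split on $k(\bar B)\in\{1,2,3,4\}$ using block domination, handling $k(\bar B)=2$ via Brandt and $2$-length one, and $k(\bar B)=3$ via Belonogov together with the Kessar--Malle/Landrock incompatibility at $p=3$. The only cosmetic differences are that in the $k(\bar B)=2$ case the paper invokes Lemma~\ref{k=4ell=2Normal_D} (forcing $|D|=5$) rather than your $\bar D=1$ contradiction, and in the $k(\bar B)=3$ case the paper cites only Lemma~\ref{k=4ell=2_abelianD} where you (more carefully) also invoke Lemma~\ref{elementaryAbelian} to guarantee $D$ is non-cyclic for Landrock's divisibility.
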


\begin{proof}
Since we deal with only the principal block of $FG$, the first assertion is trivial.
Next, set $Q:=O_p(G)$, $\bar G:=G/Q$, $\bar D:=D/Q$ and $\bar B:=B_0(F\bar G)$.

Assume $Q\,{\not=}\,1$. It follows from \cite[Lemma 5.8.6]{NT89} that
$B$ dominates $\bar B$ and that ${\mathrm{Irr}}(\bar B)\subseteq{\mathrm{Irr}}(B)$.
Hence $k(\bar B)$ is $1$, $2$, $3$ or $4$.
If $k(\bar B)=1$, then $p\,{\not|}\,|\bar G|$, and hence $D=Q\vartriangleleft G$, so that the
assertion follows from Lemma \ref{k=4ell=2Normal_D}.
If $k(\bar B)=4$, then ${\mathrm{Irr}}(\bar B)={\mathrm{Irr}}(B)$, which means that
$Q\leq\bigcap_{\chi\in{\mathrm{Irr}}(B)}\,\ker \chi = O_{p'}(G)$, that is a contradiction since $Q\,{\not=}\,1$.

Next suppose that $k(\bar B)=2$. Then $|\bar D|=2$ by \cite[Theorem A]{Bra82}, 
so that $p=2$ and $\bar G$ is $2$-nilpotent by Burnside's Theorem. 
Hence $G$ is $2$-solvable, which yields that $G$ is of $2$-length one since $D$ is abelian 
by Lemma \ref{k=4ell=2_abelianD}. Now, since $O_{2'}(G)=1$, $D\vartriangleleft G$.
Hence we have a contradiction by  Lemma \ref{k=4ell=2Normal_D}. 

Finally suppose that $k(\bar B)=3$. Then Theorem \ref{MainTheorem} yields that $|\bar D|=3$, so that $p=3$.
Hence, by \cite[Corollary 1.6]{Lan81}, $k_0(B)=3$ since $k(B)=4$.
But this is a contradiction since we know that $D$ is abelian by  Lemma \ref{k=4ell=2_abelianD}
so that $k(B)=k_0(B)$ by \cite{KM13}. 
\end{proof}

\begin{Lemma}\label{reduction_k=4ell=2}
W can assume that $G$ is a non-abelian simple group with an abelian Sylow $p$-subgroup $D$ of $G$.
\end{Lemma}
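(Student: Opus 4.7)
The plan is to combine the successive reductions already established in this section with the structure theorem of Kessar--Stancu, exactly as in the analogous Lemmas \ref{reduction} and \ref{k=5ell=3SimpleGp}(ii). By Lemma \ref{G=O^{p'}(G)k=4ell=2} we may assume $G=O^{p'}(G)$, by Lemma \ref{O_p(G)=1_k=4ell=2} we may assume $O_{p'}(G)=O_p(G)=1$, and by Lemma \ref{k=4ell=2_abelianD} we may assume that $D$ is abelian (in fact elementary abelian, by Lemma \ref{elementaryAbelian}).

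Under these hypotheses, \cite[2.1.Theorem]{KS95} applies and produces non-abelian simple groups $G_1,\dots,G_n$ with $p\mid |G_i|$ for every $i$ such that
\[
G \;=\; G_1\times\cdots\times G_n.
\]
Setting $B_i:=B_0(FG_i)$, the principal block decomposes as $B\cong B_1\tenk\cdots\tenk B_n$, so
\[
\ell(B)\;=\;\ell(B_1)\cdots\ell(B_n).
\]
Each $G_i$ has order divisible by $p$ and is non-abelian simple, hence non-$p$-nilpotent, so $\ell(B_i)\geq 2$ for every $i$. Since $\ell(B)=2$ by Hypothesis \ref{Hypo_k=4ell=2}, this forces $n=1$, and so $G=G_1$ is non-abelian simple with abelian Sylow $p$-subgroup $D$.

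There is no real obstacle: the work has all been done in the preceding lemmas, and the argument is essentially a citation of the Kessar--Stancu theorem followed by the same tensor-product/$\ell(B)$ counting used in the $k(B)=3$ case.
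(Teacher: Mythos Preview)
Your argument is correct and is essentially identical to the paper's own proof: reduce via Lemmas \ref{G=O^{p'}(G)k=4ell=2}, \ref{O_p(G)=1_k=4ell=2} and \ref{k=4ell=2_abelianD} and then apply \cite[2.1.Theorem]{KS95} together with the $\ell(B)$ count to force $n=1$. One small correction: the reference \cite{KS95} is Kimmerle--Sandling, not ``Kessar--Stancu''.
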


\begin{proof}
By Lemmas \ref{O_p(G)=1_k=4ell=2} and \ref{G=O^{p'}(G)k=4ell=2}, 
we can assume $O_{p'}(G)=1=O_p(G)$ and  $G=O^{p'}(G)$.
Since 
$D$ is abelian by Lemma \ref{k=4ell=2_abelianD}, 
the same argument as the proof of Lemma \ref{k=5ell=3SimpleGp}(ii) yields the assertion.
\end{proof}

\noindent
Now we are ready to prove the main result of this section.

\begin{proof}[{\bf Proof of Theorem \ref{k=4ell=2MainTheorem}}]
First, assume that $D$ is cyclic. Then \cite{Dad66} implies that $4=k(B)=\ell(B)+m=2+m$, so that
$m=2$ where $m$ is the multiplicity of the exceptional characters in $B$, which implies that
$|D|=2{\cdot}2+1=5$, so we are done. Hence we can assume that $D$ is non-cyclic, so that $d\geq 2$.

Assume $p=2$. By Lemma \ref{k=4ell=2_abelianD}, $D$ is abelian. 
Let $\mathbb B:=B_0(F\,N_G(D))$. 
Then, by \cite[Theorem]{FH93}, 
$k(\mathbb B)=k(B)=4$ and $\ell (\mathbb B)=\ell(B)=2$. Thus, by Lemma \ref{k=4ell=2Normal_D},
$p=5$, a contradiction.

If $p=3$, then \cite[Corollary 1.6]{Lan81} implies that $k_0(B)=3$, so that $k_0(B)=3<4=k(B)$, that is a 
contradiction by \cite{KM13} since $D$ is abelian by  Lemma \ref{k=4ell=2_abelianD}.

Hence $p\geq 5$. By Lemmas \ref{elementaryAbelian} and \ref{reduction_k=4ell=2},
we can assume that $G$ is a 
non-abelian simple group with the non-cyclic elementary abelian Sylow $p$-subgroup $D$.

Then, just as in the proof of  Lemma \ref{k=5ell=3SimpleGp},      
it follows from Lemmas \ref{k=4ell=2Normal_D}, \ref{sporadic}, \ref{BroueMichel} 
and also 
the {\sf CFSG} that the assertion follows as before.
\end{proof}

\renewcommand{\baselinestretch}{1}
{\bf Acknowledgments.}
{\small 
The authors would like to thank Vyacheslav Aleksandrovich Belonogov and 
Benjamin Sambale for useful information;
and Radha Kessar for Lemma \ref{Radha}.
They are grateful also to Ron Brown, 
Marc Cabanes, Silvio Dolfi, Olivier Dudas, 
Klaus Lux,  Leo Margolis, Yoshihiro Otokita, and 
Atumi Watanabe for comments and advice.
Finally, the authors deeply would like to thank Gunter Malle who read the manuscript
of the first version and gave them many useful comments.
}

\end{document}